\newcommand{\comment}[1]{}
\newtheorem{theorem}{\bf Theorem}
\newtheorem{proposition}[theorem]{\bf Proposition}
\newtheorem{corollary}[theorem] {\bf Corollary}
\newtheorem{lemma}{\bf Lemma}
\newtheorem{definition}{\bf Definition}
\newtheorem{remark}{\bf Remark}
\newcommand{\con}{{\rm con~}}
\newcommand{\NN}{\mathbb N}
\newcommand{\RR}{\mathbb R}
\newcommand{\CC}{\mathbb C}
\newcommand{\bfx}{{\bf x}}
\newcommand{\bbb}{{\bf b}}
\newcommand{\bfc}{{\bf c}}
\newcommand{\bfu}{{\bf u}}
\newcommand{\bfv}{{\bf v}}
\newcommand{\bfw}{{\bf w}}
\newcommand{\bfn}{{\bf n}}
\newcommand{\bnu}{{\mathbf \nu}}
\newcommand{\nux}{{\mathbf \nu}{(\bf x})}
\newcommand{\nuy}{{\mathbf \nu}{(\bf y})}
\newcommand{\Floor}[1]{{\left\lfloor{#1}\right\rfloor}}
\newcommand{\Ceil}[1]{{\left\lceil{#1}\right\rceil}}
\def\vp{\varphi}
\def\al{\alpha}
\def\y{{\bf y}}
\def\x{{\bf x}}
\def\z{{\bf z}}
\def\nx{{\bf n(x)}}
\def\ny{{\bf n(y)}}
\def\t{{\bf t}}
\def\w{{\bf w}}
\def\c{{\bf c}}
\def\n{{\bf n}}
\def\tilo{\tilde{\omega}}
\def\intt{\mathop{\rm int}}
\def\<{\langle}
\def\>{\rangle}
\newcounter{oldresult}
\def\theoldresult{\Alph{oldresult}}
\newenvironment{oldresult}{
  \em
  \vskip 0.10in
  \refstepcounter{oldresult}
  \noindent{\bf Theorem\ \theoldresult.}
}{\vskip 0.10in}
\newcounter{oldprop}
\def\theoldprop{\Alph{oldprop}}
\newcounter{oldlemma}
\def\theoldlemma{\Alph{oldlemma}}
\newcounter{oldcor}
\def\theoldcor{\Alph{oldcor}}
\newcounter{oldconjecture}
\def\theoldconjecture{\Alph{oldconjecture}}
\newcounter{hypothesis}
\def\thehypothesis{\Alph{hypothesis}}
\newcounter{rev}
\begin{document}

\title[Blaschke Rolling Ball Theorem]
{A discrete extension of the Blaschke Rolling Ball Theorem}

\author[Sz. Gy. R\'ev\'esz]
{Szil\'ard Gy. R\'ev\'esz}

\address{A. R\'enyi Institute of Mathematics \newline \indent Hungarian
Academy of Sciences, \newline \indent Budapest, P.O.B. 127, 1364
\newline \indent Hungary} \email{revesz@renyi.hu}


\begin{abstract}
The Rolling Ball Theorem asserts that given a convex body
$K\subset \RR^d$ in Euclidean space and having a $C^2$-smooth
surface $\partial K$ with all principal curvatures not exceeding
$c>0$ at all boundary points, $K$ necessarily has the property
that to each boundary point there exists a ball $B_r$ of radius
$r=1/c$, fully contained in $K$ and touching $\partial K$ at the
given boundary point from the inside of $K$.

In the present work we prove a discrete analogue of the result on
the plane. We consider a certain discrete condition on the
curvature, namely that to any boundary points $\x,\y\in\partial K$
with $|\x-\y|\leq \tau$, the angle $\vp(\bfn_\x,\bfn_\y):= \arccos
\langle \bfn_\x,\bfn_\y \rangle$ of any unit outer normals
$\bfn_\x,\bfn_\y$ at $\x$ and at $\y$, resp., does not exceed a
given angle $\varphi$. Then we construct a corresponding body,
$M(\tau,\varphi)$, which is to lie fully within $K$ while
containing the given boundary point $\x\in\partial K$.

In dimension $d=2$, that is, on the plane, $M$ is almost a regular
$n$-gon, and the result allows to recover the precise form of
Blaschke's Rolling Ball Theorem in the limit.

Similarly, we consider the dual type discrete Blaschke theorems
ensuring certain circumscribed polygons. In the limit, the
discrete theorem enables us to provide a new proof for a strong
result of Strantzen assuming only a.e. existence and lower
estimations on the curvature.

For $d\geq 3$, directly we can derive only a weaker, quasi-precise
form of the discrete inscribed ball theorem, while no space
version of the circumscribed ball theorem is found. However, at
least the higher dimensional smooth cases follow already from the
plane versions of the smooth theorems, which obtain as limiting
cases also from our discrete versions.
\end{abstract}

\thanks{Supported in part by the Hungarian National Foundation for
Scientific Research, Project \#s K-61908 and K-72731.}

\maketitle


\section{Introduction}\label{sec:intro}

Let $\RR^d$ be the usual Euclidean space of dimension $d$,
equipped with the Euclidean distance $|\cdot|$. Our starting point
is the following classical result of Blaschke \cite[p. 116]{Bla}.

\begin{oldresult}{\bf (Blaschke).}\label{oldth:Blaschke} Assume
that the convex domain $K\subset \RR^2$ has $C^2$ boundary
$\Gamma=\partial K$ and that with the positive constant
$\kappa_0>0$ the curvature satisfies $\kappa(\z)\le \kappa_0$ at
all boundary points $\z\in\Gamma$. Then to each boundary points
$\z\in\Gamma$ there exists a disk $D_R$ of radius $R=1/\kappa_0$,
such that $\z\in\partial D_R$, and $D_R \subset K$.
\end{oldresult}

Note that the result, although seemingly local, does not allow for
extensions to non-convex curves $\Gamma$. One can draw pictures of
leg-bone like shapes of arbitrarily small upper bound of
(positive) curvature, while at some points of touching containing
arbitrarily small disks only. The reason is that the curve, after
starting off from a certain boundary point $\x$, and then leaning
back a bit, can eventually return arbitrarily close to the point
from where it started: hence a prescribed size of disk cannot be
inscribed. 

On the other hand the Blaschke Theorem extends to any dimension
$d\in\NN$. Also, the result has a similar, dual version, too, see
\cite[p. 116]{Bla}.

\begin{oldresult}{\bf (Blaschke.)}\label{thold:Blaschkeout} Assume
that $K\subset \RR^2$ is a convex domain with $C^2$-smooth
boundary curve $\gamma$ having curvature $\kappa\geq \kappa_0$ all
over $\gamma$. Then to all boundary point $\x\in\gamma$ there
exists a disk $D_R$ of radius $R=1/\kappa_0$, such that
$\x\in\partial D_R$, and $K\subset D_R$.
\end{oldresult}

In Section \ref{sec:preliminaries} we introduce a few notions and
recall auxiliary facts. In \S \ref{sec:discreteresults} we
formulate and prove the two basic results -- the discrete forms of
the Blaschke Theorems -- of our paper. Then we show how our
discrete approach yields a new, straightforward proof for a more
involved sharpening of Theorem \ref{oldth:Blaschke}, originally
due to Strantzen.

This all concerns dimension 2. Only in Section \ref{s:highdim}
will we consider the case of higher dimensional Euclidean spaces.
Certain corresponding results hold also in $\RR^d$, but they are
less satisfactory, as the classical Blaschke theorem cannot be
recovered from them in the limit. Nevertheless, it is worthy to
formulate them, in view of certain applications in multivariate
approximation, what we have in mind when analyzing these
questions.

\section{Preliminaries, geometrical notions}\label{sec:preliminaries}

Recall that the term planar {\em convex body} stands for a
compact, convex subset of $\CC\cong\RR^2$ having nonempty
interior. For a (planar) convex body $K$ any interior point $z$
defines a parametrization $\gamma(\varphi)$ -- the usual polar
coordinate representation of the boundary $\partial K$, -- taking
the unique point \hbox{$\{z+te^{i\varphi}:\,t\in (0,\infty)\} \cap
\partial K$} for the definition of $\gamma(\varphi)$. This defines
the closed Jordan curve $\Gamma=\partial K$ and its
parametrization $\gamma : [0,2\pi] \to \CC$. By convexity, from
any boundary point $\zeta=\gamma(\theta)\in
\partial K$, locally the chords to boundary points with parameter
$<\theta$ or with $>\theta$ have arguments below and above the
argument of the direction of any supporting line at $\zeta$. Thus
the tangent direction or argument function $\alpha_{-}(\theta)$
can be defined as e.g. the supremum of arguments of chords from
the left; similarly, $\alpha_{+}(\theta):=\inf \{\arg (z-\zeta)~:~
z=\gamma(\varphi),~ \varphi>\theta \}$, and any line
$\zeta+e^{i\beta}\RR$ with $\alpha_{-}(\theta)\le \beta \le
\alpha_{+}(\theta)$ is a supporting line to $K$ at
$\zeta=\gamma(\theta)\in\partial K$. In particular the curve
$\gamma$ is differentiable at $\zeta=\gamma(\theta)$ if and only
if $\alpha_{-}(\theta)=\alpha_{+}(\theta)$; in this case the
tangent of $\gamma$ at $\zeta$ is $\zeta+e^{i\alpha}\RR$ with the
unique value of $\alpha=\alpha_{-}(\theta)=\alpha_{+}(\theta)$. It
is clear that interpreting $\alpha_{\pm}$ as functions on the
boundary points $\zeta\in\partial K$, we obtain a
parametrization-independent function. In other words, we are
allowed to change parameterizations to arc length, say, when in
case of $|\Gamma|=\ell$ ($|\Gamma|$ meaning the length of $\Gamma
:=\partial K$) the functions $\alpha_{\pm}$ map $[0,\ell]$ to
$[0,2\pi]$.

Observe that $\alpha_{\pm}$ are nondecreasing functions with total
variation ${\rm Var}\,[\alpha_{\pm}] = 2\pi$, and that they have a
common value precisely at continuity points, which occur exactly
at points where the supporting line to $K$ is unique. At points of
discontinuity $\alpha_{\pm}$ is the left-, resp. right continuous
extension of the same function. For convenience, and for better
matching with \cite{BS}, we may even define the function
$\alpha:=(\alpha_{+}+\alpha_{-})/2$ all over the parameter
interval.

For obvious geometric reasons we call the jump function
$\beta:=\alpha_{+}-\alpha_{-}$ the {\em supplementary angle}
function. In fact, $\beta$ and the usual Lebesgue decomposition of
the nondecreasing function $\alpha_{+}$ to
$\alpha_{+}=\sigma+\alpha_{*}+\alpha_{0}$, consisting of the pure
jump function $\sigma$, the nondecreasing singular component
$\alpha_{*}$, and the absolute continuous part $\alpha_0$, are
closely related. By monotonicity there are at most countable many
points where $\beta(x)>0$, and in view of bounded variation we
even have $\sum_x \beta(x) \le 2\pi$, hence the definition
$\mu:=\sum_x \beta(x)\delta_x$ defines a bounded, non-negative
Borel measure on $[0,2\pi)$. Now it is clear that
$\sigma(x)=\mu([0,x])$, while $\alpha_{*}'=0$ a.e., and $\alpha_0$
is absolutely continuous. In particular, $\alpha$ or $\alpha_{+}$
is differentiable at $x$ provided that $\beta(x)=0$ and $x$ is not
in the exceptional set of non-differentiable points with respect
to $\alpha_{*}$ or $\alpha_0$. That is, we have differentiability
almost everywhere, and
\begin{align}\label{aedifferentiability}
\int_x^y \alpha' =& \alpha_0(y)-\alpha_0(x) = \lim_{z\to x-0}
\alpha_0(y)-\alpha_0(z) \notag
\\=&
\lim_{z\to x-0} \left\{
[\alpha_{+}(y)-\sigma(y)-\alpha_{*}(y))]-[\alpha_{+}(z)-\sigma(z)-\alpha_{*}(z)]\right\}
\notag \\ = & \alpha_{+}(y)- \beta(y)
- \mu([x,y)) - \lim_{z\to x-0} \alpha_{+}(z) - \lim_{z\to x-0}
[\alpha_{*}(y)-\alpha_{*}(z)] \le \alpha_{-}(y)-\alpha_{+}(x)~.
\end{align}
It follows that
\begin{equation}\label{differentiallarge}
\alpha'(t)\ge \lambda \qquad \text{a.e}. \quad t\in [0,a]
\end{equation}
holds true if and only if we have
\begin{equation}\label{fixchange}
\alpha_{\pm}(y)-\alpha_{\pm}(x) \ge \lambda (y-x) \qquad \forall
x,y \in [0,a]~.
\end{equation}
Here we restricted ourselves to the arc length parametrization
taken in positive orientation. Recall that one of the most
important geometric quantities, curvature, is just
$\kappa(s):=\alpha'(s)$, whenever parametrization is by arc length
$s$.

Thus we can rewrite \eqref{differentiallarge} as
\begin{equation}\label{curvaturesmall}
\kappa(t) \ge \lambda \qquad \text{a.e}. \quad t\in [0,a]~,
\end{equation}
or, with radius of curvature $\rho(t):=1/\kappa(t)$ introduced
(writing $1/0=\infty$),
\begin{equation}\label{curvradlarge}
\rho(t) \le \frac{1}{\lambda} \qquad \text{a.e}.\quad t\in [0,a]~.
\end{equation}
Again, $\rho$ is a parametrization-invariant quantity (describing
the radius of the osculating circle). Actually, it is easy to
translate all these conditions to arbitrary parametrization of the
tangent angle function $\alpha$. Since also curvature and radius
of curvature are parametrization-invariant quantities, all the
above hold for any parametrization.

Moreover, with a general parametrization let
$|\Gamma(\eta,\zeta)|$ stand for the length of the
counterclockwise arc $\Gamma(\eta,\zeta)$ of the rectifiable
Jordan curve $\Gamma$ between the two points $\zeta, \eta \in
\Gamma=\partial K$. We can then say that the curve satisfies a
Lipschitz-type increase or {\em subdifferential condition}
whenever
\begin{equation}\label{subdiffcond}
|\alpha_{\pm}(\eta)-\alpha_{\pm}(\zeta)| \ge \lambda
|\Gamma(\eta,\zeta)| \qquad (\forall \zeta, \eta \in \Gamma)~,
\end{equation}
here meaning by $\alpha_{\pm}(\xi)$, for $\xi\in\Gamma$, not
values in $[0,2\pi)$, but a locally monotonously increasing branch
of $\alpha_{\pm}$, with jumps in $(0,\pi)$, along the
counterclockwise arc $\Gamma(\eta,\zeta)$ of $\Gamma$. Clearly,
the above considerations show that all the above are equivalent.

In the paper we use the notation $\alpha$ (and also
$\alpha_{\pm}$) for the tangent angle, $\kappa$ for the curvature,
and $\rho$ for the radius of curvature. The counterclockwise taken
right hand side tangent unit vector(s) will be denoted by $\t$,
and the outer unit normal vectors by $\bfn$. These notations we
will use basically in function of the arc length parametrization
$s$, but with a slight abuse of notation also
$\alpha_{-}(\varphi)$, $\t(\x)$, $\nx$ etc. may occur with the
obvious meaning.

Note that $\t(\x)=i\nx)$ and also $\t(\x)=\dot{\gamma}(s)$ when
$\x=\x(s)\in\gamma$ and the parametrization/differentiation,
symbolized by the dot, is with respect to arc length; moreover,
with $\nu(s):\arg(\bfn(\x(s))$ we obviously have $\alpha\equiv\nu
+\pi/2 \mod 2\pi$ at least at points of continuity of $\alpha$ and
$\nu$. To avoid mod $2\pi$ equality, we can shift to the universal
covering spaces and maps and consider $\widetilde{\alpha},
\widetilde{\nu}$, i.e. $\widetilde{\t}, \widetilde{\bfn}$ -- e.g.
in case of $\widetilde{\bfn}$ we will somewhat detail this right
below. However, note a slight difference in handling $\alpha$ and
$\widetilde{\bfn}$: the first is taken as a singlevalued function,
with values $\alpha(s):=\frac12\{\alpha_{-}(s)+\alpha_{+}(s)\}$ at
points of discontinuity, while $\widetilde{\bfn}$ is a multivalued
function attaining a full closed interval
$[\widetilde{\bfn}_{-}(s),\widetilde{\bfn}_{+}(s)]$ whenever $s$
is a point of discontinuity. Also recall that curvature, whenever
it exists, is
$|\ddot{\gamma}(s)|=\alpha'(s)=\widetilde{\bfn}'(s)$.

In this work we mean by a multi-valued function $\Phi$ from $X$ to
$Y$ a (non-empty-valued) mapping $\Phi:X\to
2^Y\setminus\{\emptyset\}$, i.e. we assume that the domain of
$\Phi$ is always the whole of $X$ and that $\emptyset \ne
\Phi(x)\subset Y$ for all $x\in X$. Recall the notions of modulus
of continuity and minimal oscillation in the full generality of
multi-valued functions between metric spaces.

\begin{definition}[\bf modulus of continuity and minimal oscillation]
\label{modcontdef} Let $(X,d_X)$ and $(Y,d_Y)$ be metric spaces.
We call the \emph{modulus of continuity} of the multivalued
function $\Phi$ from $X$ to $Y$ the quantity
\begin{equation}\label{modcontmetr}
\omega(\Phi,\tau):=\sup \{ d_Y(y,y') ~:~ x,x'\in X,~d_X(x,x') \leq
\tau,~y\in \Phi(x),~y'\in\Phi(x') \}.\notag
\end{equation}
Similarly, we call \emph{minimal oscillation} of $\Phi$ the
quantity
\begin{equation}\label{minoscmetr}
\Omega(\Phi,\tau):= \inf\{ d_Y(y,y') ~:~ x,x'\in X,~d_X(x,x') \geq
\tau,~y\in \Phi(x),~y'\in\Phi(x') \}.\notag
\end{equation}
\end{definition}

If we are given a multi-valued \emph{unit vector function}
$\bfv(\x) :H\to 2^{S^{d-1}}\setminus \{\emptyset\}$, where
$H\subset \RR^d$ and $S^{d-1}$ is the unit ball of $\RR^d$, then
the derived formulae become:
\begin{equation}\label{modcontuv}
\omega(\tau):=\omega(\bfv,\tau):=\sup \{ \arccos \langle \bfu,
\bfw \rangle \,:~\,\x,\y\in H,~|\x-\y| \leq \tau,~ \bfu\in
\bfv(\x),\bfw\in\bfv(\y) \},
\end{equation}
and
\begin{equation}\label{minoscuv}
\Omega(\tau):=\Omega(\bfv,\tau):=\inf \{ \arccos \langle \bfu,\bfw
\rangle ~:~ \x,\y\in H,~|\x-\y| \geq \tau ,~ \bfu\in
\bfv(\x),~\bfw\in\bfv(\y)\}.
\end{equation}

For a \emph{planar} multi-valued unit vector function $\bfv: H\to
2^{S^1}\setminus \{\emptyset\}$, where $H\subset \RR^2\simeq \CC$
and $S^1$ is the unit circle in $\RR^2$, we can parameterize the
unit circle $S^1$ by the corresponding angle $\varphi$ and thus
write $\bfv(\x)=e^{i\Phi(\x)}$ with $\Phi(\x):=\arg( \bfv(\x))$
being the corresponding angle. We will somewhat elaborate on this
observation in the case when our multi-valued vector function is
the outward normal vector(s) function $\nx$ of a closed convex
curve.

Let $\gamma $ be the boundary curve of a convex body in ${\Bbb
R}^2$, which will be considered as oriented counterclockwise, and
let the multivalued function $\bold{n(x)}: \gamma \to 2^{S^1}
\setminus \{ \emptyset \}$ be defined as the set of all outward
unit normal vectors of $\gamma $ at the point $\bold{x} \in \gamma
$. Observe that the set $\bfn ({\bold{x}})$ of the set of values
of $\bfn $ at any ${\bold{x}} \in \gamma $ is either a point, or a
closed segment of length less than $\pi $. Then there exists a
unique lifting $\tilde \bfn $ of $\bfn $ from the universal
covering space $\tilde {\gamma } (\simeq \RR$, see below) of
$\gamma $ to the universal covering space ${\Bbb R}=\tilde {S^1}$
of $S^1$, with the respective universal covering maps $\pi
_{\gamma }: \tilde {\gamma } \to \gamma $ and $\pi _{S^1}: \tilde
{S^1} \to S^1$, with properties to be described below. Here we do
not want to recall the concept of the universal covering spaces
from algebraic topology in its generality, but restrict ourselves
to give it in the situation described above. As already said,
$\tilde {S^1}={\Bbb R}$ and the corresponding universal covering
map is $\pi _{S^1}: x \to (\cos x, \sin x )$ (We consider, as
usual, $S^1$ as ${\Bbb R} \mod 2 \pi $.) Similarly, for $\gamma $
we have $\tilde \gamma ={\Bbb R}$, with universal covering map
$\pi _{\gamma }: {\Bbb R} \to \gamma $ given in the following way.
Let us fix some arbitrary point $\bold{x_0} \in \gamma $, (the
following considerations will be independent of $\bold{x_0}$, in
the natural sense). Let us denote by $\ell$ the length of $\gamma
$. Then for $\lambda \in {\Bbb R}= \tilde {\gamma }$ we have that
$\pi _{\gamma } (\lambda ) \in \gamma $ is that unique point
$\bold{x}$ of $\gamma $, for which the counterclockwise measured
arc $\bold{x_0x}$ has a length $\lambda \mod \ell$.

Now we describe the postulates for the multivalued function
$\tilde {\bfn }: {\Bbb R}=\tilde {\gamma } \to \tilde {S^1}={\Bbb
R}$, which determine it uniquely. First of all, we must have the
equality $\pi _{S^1} \circ \tilde {\bfn }=\bfn \circ \pi _{\gamma
}$, where $\circ $ denotes the composition of two multivalued
functions. (In algebraic topology this is called
{\it{commutativity of a certain square of mappings}}.) Second, the
values of $\tilde \bfn $ must be either points or non-degenerate
closed intervals (of length less than $\pi $; however this last
property follows from the other ones). Third, $\tilde \bfn $ must
be non-decreasing in the following sense: for $\lambda _1, \lambda
_2 \in {\Bbb R},\,\, \lambda _1 < \lambda _2$ we have $r_1 \in
\tilde{\bfn} (\lambda _1), r_2 \in \tilde{\bfn}(\lambda_2)
\Longrightarrow r_1 \le r_2$. Further, $\tilde{\bfn} $ must be a
non-decreasing multivalued function, continuous from the left,
i.e., for any $\lambda \in {\Bbb R}$ we have that for any
$\varepsilon >0$ there exists a $\delta >0$, such that $\cup_{\mu
\in (\lambda - \delta , \lambda )} \tilde{\bfn} (\mu ) \subset
(\min \tilde{\bfn} (\lambda )-\varepsilon , \min \tilde{\bfn}
(\lambda ))$. Analogously, $\tilde{\bfn} $ must be a
non-decreasing multi-valued function continuous from the right,
i.e., for any $\lambda \in {\Bbb R}$ we have that for any
$\varepsilon
>0$ there exists a $\delta >0$, such that $\cup_{\mu \in (\lambda
, \lambda + \delta )} \tilde{\bfn} (\mu ) \subset (\max
\tilde{\bfn} (\lambda ), \max \tilde{\bfn} (\lambda )+\varepsilon
)$. These are all the postulates for the multi-valued function
$\tilde{\bfn}$. It is clear, that $\tilde {\bfn }$ exists and is
uniquely determined, for fixed ${\bold{x_0}}$ (and, for
$\bold{x_0}$ arbitrary, only the parametrization of ${\Bbb
R}=\tilde {\gamma }$ changes, by a translation.)

The above listed properties imply still one important property of
the multi-valued function $\tilde {\bfn }$: we have for any
$\lambda \in {\Bbb R}$ that $\tilde{\bfn} (\lambda
+\ell)=\tilde{\bfn} (\lambda )+2\pi $.

\begin{definition} We define the modulus of continuity of the
multi-valued normal vector function $\bfn(\x)$ \emph{with respect
to arc length} as the (ordinary) modulus of continuity of the
multi-valued lift-up function $\tilde {\bfn } : {{\Bbb{R}}} \to
{\Bbb R}\setminus\{\emptyset\}$, i.e. as
\begin{align}\label{omegaiv}
\tilde{\omega}(\tau):=\tilde {\omega} (\bfn, \tau )& :=\omega
(\tilde {\bfn }, \tau )\notag \\ &:= \sup \{ |r_1-r_2| \mid r_1
\in \tilde {\bfn } (\lambda _1),r_2 \in \tilde{\bfn} (\lambda
_2),\,\,\lambda _1, \lambda _2\in {\Bbb R}, | \lambda _1 - \lambda
_2 | \le \tau \} .
\end{align}
Similarly, we define the minimal oscillation of the multi-valued
normal vector function $\bfn(\x)$ \emph{with respect to arc
length} as the (ordinary) minimal oscillation function of $\tilde
{\bfn }$, i.e. as
\begin{align}\label{Omegaiv}
\tilde{\Omega}(\tau):=\tilde {\Omega} (\bfn , \tau )& := \Omega
(\tilde {\bfn }, \tau )\notag \\&:= \inf \{ |r_1-r_2| \mid r_1 \in
\tilde {\bfn } (\lambda _1),r_2 \in \tilde{\bfn} (\lambda
_2),\,\,\lambda _1, \lambda _2 \in {\Bbb R}, | \lambda _1 -
\lambda _2 | \geq \tau \} .
\end{align}
\end{definition}

By writing "modulus of continuity" we do not mean to say anything
like continuity of $\tilde {\bfn }$. In fact, if for some $\lambda
\in {\Bbb R}$ $\tilde {\bfn }(\lambda )$ is a non-degenerate
closed segment, then the left-hand side and right-hand side limits
of $\tilde {\bfn }$ at $\lambda $ - in the sense of the definition
of continuity from the left or right, respectively - are surely
different.

We evidently have that the modulus of continuity of $\tilde{\bfn}$
is subadditive, meaning $\tilde{\omega} (\tau _1 +\tau _2) \le
\tilde{\omega} (\tau _1)+\tilde{\omega} (\tau _2)$, and similarly,
that the minimal oscillation of $\tilde {\bfn }$ is superadditive,
meaning $\tilde{\Omega} (\tau _1 +\tau _2) \ge \tilde{\Omega}
(\tau _1)+\tilde{\Omega} (\tau _2)$. In fact, a standard property
of the modulus of continuity of \emph{any (non-empty valued)
multivalued function from $\RR$ (or from any convex set, in the
sense of metric intervals) to $\RR$} is subadditivity, and
similarly, minimal oscillation of such a function is
superadditive. These properties with non-negativity and
non-decreasing property also imply that
$\tilde{\omega}(\tau)/\tau$ and $\tilde{\Omega}(\tau)/\tau$ have
limits when $\tau\to 0$; moreover, $\lim_{\tau\to 0}
\tilde{\omega}(\tau)/\tau =\sup \tilde{\omega}(\tau)/\tau$ and
$\lim_{\tau\to 0} \tilde{\Omega}(\tau)/\tau =\inf
\tilde{\Omega}(\tau)/\tau$. Note that metric convexity is
essential here, so e.g. it is not clear if in $\RR^d$ any proper
analogy could be established.


Observe that if the curvature of $\gamma$ exists at $\x_0$, then
for the non-empty valued multi-valued function $\nx:=$"set of
values of all outer unit normal vectors of $\gamma$ at $\x$", we
necessarily have $\# \bfn(\x_0)=1$ and the curvature can be
written as
\begin{equation}\label{curvature}
\kappa(\x_0)=\lim\limits_{\y\to\x_0~\bfv\in \ny} \frac{\arccos
\langle \bfn(\x_0),\bfv \rangle}{|\x_0-\y|},
\end{equation}
where the limit in \eqref{curvature} exists with arbitrary choice
of $\bfv\in \y$ and is independent of this choice.

The next two propositions are well-known.

\begin{proposition}\label{prop:limits} Let $\gamma$ be a planar
convex curve. Recall that \eqref{modcontuv} and \eqref{minoscuv}
is the modulus of continuity and the minimal oscillation of the
multi-valued normal vector function $\nx$ with respect to chord
length, and that \eqref{omegaiv} and \eqref{Omegaiv} stand for the
modulus of continuity and the minimal oscillation of $\nx$ with
respect to arc length. Then for all $\x\in \gamma$ with curvature
$\kappa(\x)\in[0,\infty]$ we have
\begin{equation}\label{kappaomega}
\lim_{\tau\to 0} \frac{\Omega(\tau)}{\tau} = \lim_{\tau\to 0}
\frac{\tilde{\Omega}(\tau)}{\tau} \leq \kappa(\x) \leq
\lim_{\tau\to 0} \frac{\tilde{\omega}(\tau)}{\tau} =\lim_{\tau\to
0} \frac{\omega(\tau)}{\tau}.
\end{equation}
\end{proposition}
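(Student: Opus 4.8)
The plan is to establish the two equalities in \eqref{kappaomega} first, and then the two inequalities, since the equalities are essentially a matter of comparing arc length and chord length, while the inequalities are where the curvature actually enters. For the equalities, fix $\x\in\gamma$ and note that arc length and chord length are comparable near $\x$: if $\y,\z\in\gamma$ are close, then with $s=|\Gamma(\y,\z)|$ the chord length $|\y-\z|$ satisfies $|\y-\z|\le s$ always, and $|\y-\z|/s \to 1$ as $s\to 0$ (uniformly, by compactness and rectifiability, as long as both points converge to a common limit; for the global quantities $\omega,\Omega$ one uses that the extremal configurations for small $\tau$ necessarily have both points close together, because the normal can only turn by a small amount over a short arc and, conversely, two far-apart points on a convex curve have chords and normals bounded away from being parallel). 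Consequently, for the "sup" quantities, given $\varepsilon>0$ there is $\tau_0$ so that for $\tau<\tau_0$ the defining configurations of $\omega(\tau)$ and $\tilde\omega((1+\varepsilon)\tau)$ (resp.\ $\tilde\omega(\tau)$ and $\omega((1+\varepsilon)\tau)$) dominate one another, whence $(1+\varepsilon)^{-1}\limsup \tilde\omega(\tau)/\tau \le \limsup\omega(\tau)/\tau \le (1+\varepsilon)\limsup\tilde\omega(\tau)/\tau$; letting $\varepsilon\to0$ and using that both limits exist (by the subadditivity/superadditivity remarks recalled just above, which give $\lim_{\tau\to0}\tilde\omega(\tau)/\tau=\sup_\tau\tilde\omega(\tau)/\tau$ and likewise for $\tilde\Omega$) gives the outer two equalities. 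The same argument with inequalities reversed handles $\Omega$ and $\tilde\Omega$.

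For the middle two inequalities it suffices, by the equalities just proved, to work with the arc length quantities $\tilde\omega,\tilde\Omega$, or directly with the chord versions $\omega,\Omega$ — I will use the chord versions since \eqref{curvature} is stated in those terms. The upper bound $\kappa(\x)\le\lim_{\tau\to0}\omega(\tau)/\tau$: if $\kappa(\x)=\infty$ there is nothing to prove for the displayed chain once we know the left side is finite, so assume $\kappa(\x)<\infty$; then by \eqref{curvature}, $\#\bfn(\x)=1$ and for every $\delta>0$ there is $\tau_0$ with $\arccos\langle\bfn(\x),\bfv\rangle\ge(\kappa(\x)-\delta)|\x-\y|$ for all $\y$ with $|\x-\y|\le\tau_0$ and all $\bfv\in\bfn(\y)$. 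Taking the pair $(\x,\y)$ in the definition \eqref{modcontuv} with $|\x-\y|=\tau$ (such $\y$ exists for all small $\tau$ since $\gamma$ is a closed curve through $\x$) shows $\omega(\tau)\ge(\kappa(\x)-\delta)\tau$, hence $\lim\omega(\tau)/\tau\ge\kappa(\x)-\delta$, and $\delta\to0$ finishes it. The lower bound $\lim_{\tau\to0}\Omega(\tau)/\tau\le\kappa(\x)$: again assume $\kappa(\x)<\infty$ (if $\kappa(\x)=\infty$ the inequality is automatic); by \eqref{curvature}, given $\delta>0$ there is $\tau_0$ with $\arccos\langle\bfn(\x),\bfv\rangle\le(\kappa(\x)+\delta)|\x-\y|$ whenever $|\x-\y|\le\tau_0$. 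For $\tau\le\tau_0$ pick the specific $\y=\y(\tau)$ on $\gamma$ with $|\x-\y|=\tau$ and any $\bfv\in\bfn(\y)$; this is an admissible pair in the infimum \eqref{minoscuv} (distance $\ge\tau$, in fact $=\tau$), so $\Omega(\tau)\le\arccos\langle\bfn(\x),\bfv\rangle\le(\kappa(\x)+\delta)\tau$, giving $\lim\Omega(\tau)/\tau\le\kappa(\x)+\delta$ and then letting $\delta\to0$.

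The main obstacle I anticipate is the rigorous justification, in the proof of the two equalities, that the extremal (supremizing/infimizing) configurations defining $\omega(\tau)$ and $\Omega(\tau)$ for small $\tau$ involve only pairs of points that are close on the curve — without this, the comparison of chord and arc length is not uniform and the argument breaks. For $\omega$ this is clear since a small chord-distance forces (by convexity and the fact that $\bfn$ is a monotone map onto $S^1$) a small arc-distance except possibly near pairs of nearly-antipodal points, but there the normals are nearly opposite so $\arccos\langle\bfu,\bfw\rangle$ is near $\pi$ and cannot be the sup for small $\tau$; for $\Omega$ the relevant point is that the infimum over pairs at chord-distance $\ge\tau$ is attained (or approached) by pairs at chord-distance exactly $\tau$, which must then be close on the curve. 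I would handle this by a short compactness argument on $\gamma\times\gamma$ together with the observation that $\y\mapsto\arccos\langle\bfn(\x),\bfv\rangle$ (for $\bfv\in\bfn(\y)$) is continuous and vanishes only at $\y=\x$ on a neighborhood — so that once $\tau$ is small the relevant pairs are trapped in a prescribed neighborhood of the diagonal, where $|\y-\z|/|\Gamma(\y,\z)|\to1$ uniformly.
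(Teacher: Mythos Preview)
Your overall strategy matches the paper's: first prove the two equalities by comparing arc length and chord length for near-extremal pairs, then derive the two inequalities directly from the definition \eqref{curvature} of curvature. The inequalities are handled essentially identically in both; the paper simply writes $\Omega(\bfn,|\x-\y|)\le\arccos\langle\bfu,\bfv\rangle\le\omega(\bfn,|\x-\y|)$ and passes to the limit, which is exactly your argument compressed.

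There is, however, a genuine gap in your treatment of the equality $\lim\omega/\tau=\lim\tilde\omega/\tau$. Your resolution of the ``antipodal pair'' issue is backwards: you say that for such pairs the angle $\arccos\langle\bfu,\bfw\rangle$ is near $\pi$ and therefore ``cannot be the sup for small $\tau$'', but a value near $\pi$ is precisely a candidate for the supremum---large angles help, not hurt, a sup. What actually rules these pairs out is the following dichotomy, which the paper uses explicitly: either $\bfn$ is genuinely multi-valued somewhere, in which case $\tilde\omega(\tau)$ does not tend to $0$ and both limits are $+\infty$; or $\bfn$ is single-valued, hence continuous on the compact curve $\gamma$ (with the chord metric), hence uniformly continuous, so $\omega(\tau)\to 0$. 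In the latter case the extremal pair for $\omega(\tau)$ has angle $\omega(\tau)\to 0$, and one then obtains the quantitative arc--chord comparison $\tau\ge(t-s)\cos\omega(\tau)$ via the integral $\tau=\int_s^t\langle\bfn(\sigma),\bnu\rangle\,d\sigma$ (with $\bnu$ the chord normal); this replaces your qualitative ``arc/chord $\to 1$'' and makes the $(1+\varepsilon)$-sandwich rigorous. A minor further slip: in the case $\kappa(\x)=\infty$ of the upper inequality you say ``nothing to prove once we know the left side is finite'', but what is needed is $\lim\omega/\tau=\infty$, which follows by the same argument you give for finite $\kappa$ applied with an arbitrary lower bound $M$ in place of $\kappa(\x)-\delta$.
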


\begin{proof} First of all, by definition and the obvious fact
that chord length does not exceed arc length, it follows that
${\Omega(\tau)}\leq {\tilde{\Omega}(\tau)}\leq
{\tilde{\omega}(\tau)} \leq {\omega(\tau)}$. We have already
remarked, that the limits $\lim_{\tau\to 0}
{\tilde{\Omega}(\tau)}/{\tau}$ and $\lim_{\tau\to 0}
{\tilde{\omega}(\tau)}/{\tau}$ exist; moreover, $\lim_{\tau\to 0}
{\tilde{\Omega}(\tau)}/{\tau}=\inf {\tilde{\Omega}(\tau)}/{\tau}
\leq 2\pi/\ell(\gamma)$ is necessarily finite.

On the other hand, let $\tau$ be any fixed value, chosen
sufficiently small, and choose $0\leq s<t<\ell(\gamma)$,
$\gamma(s)=\x$ and $\gamma(t)=\y$ with $|\x-\y|=\tau$ such that
${\Omega}(\tau)=\arccos \langle \bfu,\bfv \rangle$ with some $\bfu
\in \bfn(\x)$, $\bfv\in \bfn(\y)$. Then clearly
$\arg{\bfu}=\tilde{\bfn}_{+}(s)$,
$\arg{\bfv}=\tilde{\bfn}_{-}(t)$, also
$\Omega(\tau)=\tilde{\bfn}_{-}(t)-\tilde{\bfn}_{+}(s)$, and for
all $s<\sigma<t$ we have $\tilde{\bfn}(\sigma)\subset
[\tilde{\bfn}_{+}(s),\tilde{\bfn}_{-}(t)]$. Moreover, putting
$\bnu$ for the normal vector of the chord $\y-\x$, having right
angle with it in the clockwise direction, we also have $\arg(\bnu)
\in [\tilde{\bfn}_{+}(s),\tilde{\bfn}_{-}(t)]$ because
$\y-\x=\int_s^t \t(\sigma) d\sigma= \int_s^t i{\bfn}(\sigma)
d\sigma$, and thus $\arg(\y-\x)\in
[\tilde{\bfn}_{+}(s)+\pi/2,\tilde{\bfn}_{-}(t)+\pi/2] \mod 2\pi$.

Now we compare arc length and chord length. We find
$\tau=|\y-\x|=\int_s^t \langle \bfn(\sigma),\bnu \rangle d\sigma
\geq (s-t)
\cos(\tilde{\bfn}_{-}(t)-\tilde{\bfn}_{+}(s))=(s-t)\cos\Omega(\tau)$,
and, as $\Omega(\tau)=O(\tau)$, we surely have
$\cos(\Omega(\tau))\to 1$ when $\tau\to 0$. It is also clear that
$t-s\to 0$ together with $\tau\to 0$, so for $\tau$ chosen
sufficiently small,
$$
\frac{\Omega(\tau)}{\tau}\geq \frac{\tilde{\Omega}(t-s)}{\tau} =
\frac{t-s}{\tau} \frac{\tilde{\Omega}(t-s)}{t-s} \geq
\frac{t-s}{\tau}(1-\varepsilon) \lim_{\xi\to 0}
\frac{\tilde{\Omega}(\xi)}{\xi} \geq (1-\varepsilon)^2
\lim_{\xi\to 0} \frac{\tilde{\Omega}(\xi)}{\xi}
$$
and it follows that the two limits of the oscillation functions
coincide.

For the modulus of continuity type quantities note that if $\bfn$
is really multivalued, i.e. there exists some point $\x\in\gamma$
where $\bfn(\x)$ consists of more than one vector, then
$\tilde{\bfn}$ attains some closed interval and
$\tilde{\omega}(\tau)$ does not go to $0$ with $\tau$: whence the
arising limits must be $+\infty$. Therefore, it suffices to
consider the case when $\bfn$, i.e. $\tilde{\bfn}$, are
single-valued (and thus $\tilde{\bfn}$ is monotonous and
continuous) functions.

Again, consider a given value $\tau>0$, sufficiently small, and a
pair of extremal points $\x=\gamma(s)$ and $\y=\gamma(t)$ with
$0\leq s<t<\ell(\gamma)$ such that $\tau=|\y-\x|$ and
$\omega(\tau)=\arccos \langle
\bfn(\x),\bfn(\y)\rangle=\tilde{\bfn}(t)-\tilde{\bfn}(s)$. As
above, for all $s<\sigma<t$ we have $\tilde{\bfn}(\sigma)\subset
[\tilde{\bfn}(s),\tilde{\bfn}(t)]$. Moreover,
$\tau=|\y-\x|=\int_s^t \langle \bfn(\sigma),\bnu \rangle d\sigma
\geq (t-s)
\cos(\tilde{\bfn}(t)-\tilde{\bfn}(s))=(t-s)\cos\omega(\tau)$, and,
as $\omega(\tau)\to 0$, we surely have $\cos(\omega(\tau))\to 1$
when $\tau\to 0$. (Observe that we already have $t-s\to 0$
together with $\tau\to 0$ -- however, we do not need it here.) At
last, we find for $\tau$ chosen sufficiently small,
$$
\frac{\omega(\tau)}{\tau}\leq \frac{t-s}{\tau}
\frac{\tilde{\omega}(t-s)}{t-s} \leq \frac1{\cos{\omega(\tau)}}
\sup_{\xi}\frac{\tilde{\omega}(\xi)}{\xi} \leq (1+\varepsilon)
\lim_{\xi\to 0}\frac{\tilde{\omega}(\xi)}{\xi}.
$$
It follows that the leftmost and rightmost limits in
\eqref{kappaomega} exist and are equal to the corresponding limits
with respect to arc length. Therefore, it suffices to prove the
inequalities involving $\kappa(\x)$ for the quantities $\omega$
and $\Omega$ only.

Clearly, $\Omega(\bfn, |\x-\y|) \leq \arccos \langle \bfu,\bfv
\rangle \leq \omega(\bfn,|\x-\y|)$ for all $\bfu \in \nx$,
$\bfv\in \ny$. Putting $\tau=|\y-\x|$, and recalling that $\nx$ is
unique by condition of existence of $\kappa(\x)$, we obtain
$$
\lim_{\tau \to 0} \frac{\Omega(\bfn,\tau)}{\tau}\leq
\Bigg(\kappa(\x)=\Bigg) \lim\limits_{\y\to\x~\bfv\in \ny}
\frac{\arccos \langle \nx,\bfv \rangle}{|\x-\y|} \leq \lim_{\tau
\to 0} \frac{\omega(\bfn,\tau)}{\tau}.
$$
\end{proof}

In the following proposition $\arccos$ will denote the branch with
values in $[0,\pi]$.

\begin{proposition}\label{prop:bounds} Let $\gamma$ be a closed convex
curve, and \eqref{modcontuv} and \eqref{minoscuv} be the modulus
of continuity and the minimal oscillation of the (in general,
multi-valued) unit normal vector function $\nx$.
\begin{enumerate}
    \item[({\it i})] If the curvature exists and is bounded from above by
    $\kappa_0$ all over $\gamma$, then there exists a bound $\tau_0>0$ so
    that for any two points $\x,\y\in\gamma$ with $|\x-\y|\leq \tau \leq
    \tau_0$ we must have $\omega(\bfn,\tau)<\pi/2$ and $\arccos \langle
    \nx,\ny \rangle \leq \kappa_0 {\tau}/{\cos(\omega(\bfn,\tau))}$. Thus we
    also have $\omega(\bfn,\tau) \leq \kappa_0\tau/ \cos(\omega(\bfn,\tau))$
    for $\tau\leq \tau_0$.
    \item[({\it ii})] If the curvature $\kappa(\x)$ exists (linearly, that
    is, according to arc length parametrization) almost everywhere, and
    is bounded from below by $\kappa_0$ (linearly)
    almost everywhere on $\gamma$, then for any two points
    $\x,\y\in\gamma$ with $|\x-\y|\geq\tau$ and for all
    $\bfu \in \nx$,$\bfv \in \ny$ we have
    $\arccos \langle \bfu,\bfv \rangle \geq \kappa_0 \tau$
    and hence $\Omega(\bfn,\tau)\geq \kappa_0\tau$.
\end{enumerate}
\end{proposition}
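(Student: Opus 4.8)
We prove the two claims separately; the common mechanism is to compare the turning of the unit normal $\bfn$ along an arc of $\gamma$ with the length of the chord it subtends, and the whole point is the asymmetry between \emph{everywhere} upper bounds and \emph{almost everywhere} lower bounds on $\kappa$. \emph{For part $(i)$, the bound $\omega(\bfn,\tau)<\pi/2$ for small $\tau$:} since $\kappa$ exists everywhere we have $\#\nx=1$ at every $\x\in\gamma$, i.e.\ $\gamma$ has no corners, and then $\bfn$ is continuous (if $\x_k\to\x$ and $\bfw$ is an accumulation point of $\bfn(\x_k)$ then $\langle\z-\x,\bfw\rangle\le 0$ for all $\z\in K$, so $\bfw$ is an outer normal at $\x$ and hence $\bfw=\nx$). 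A continuous function on the compact set $\gamma$ is uniformly continuous, so $\omega(\bfn,\tau)\to0$ as $\tau\to0$, and one may fix $\tau_0>0$ with $\omega(\bfn,\tau)<\pi/2$ for all $\tau\le\tau_0$.

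\emph{Part $(i)$, the estimate.} Fix $\tau\le\tau_0$ and $\x,\y\in\gamma$ with $|\x-\y|\le\tau$, and set $\rho:=\arccos\langle\nx,\ny\rangle\le\omega(\bfn,\tau)<\pi/2$. Let $\Gamma_0$ be the one of the two arcs of $\gamma$ joining $\x$ and $\y$ along which $\bfn$ turns by exactly $\rho$ (and not the other one), and let $L:=|\Gamma_0|$. Parametrizing $\Gamma_0$ by arc length and using that $\nu=\alpha-\pi/2$ is differentiable everywhere with $0\le\nu'=\kappa\le\kappa_0$, the mean value theorem gives $\rho\le\kappa_0 L$. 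On the other hand, choosing a suitable unit normal $\bnu$ of the chord $\y-\x$ exactly as in the proof of Proposition~\ref{prop:limits} (applied to the arc $\Gamma_0$), both $\arg\bnu$ and $\arg\bfn(\sigma)$ lie in a common interval of length $\rho$ for $\sigma$ ranging over $\Gamma_0$, so $\langle\bfn(\sigma),\bnu\rangle\ge\cos\rho$ there, and hence
$$
|\y-\x|=\int_{\Gamma_0}\langle\bfn(\sigma),\bnu\rangle\,d\sigma\ \ge\ L\cos\rho\ \ge\ L\cos\omega(\bfn,\tau),
$$
so that $L\le\tau/\cos\omega(\bfn,\tau)$. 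Combining, $\arccos\langle\nx,\ny\rangle=\rho\le\kappa_0 L\le\kappa_0\tau/\cos\omega(\bfn,\tau)$; the right-hand side does not depend on the pair, so taking the supremum over all admissible pairs yields $\omega(\bfn,\tau)\le\kappa_0\tau/\cos\omega(\bfn,\tau)$.

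\emph{Part $(ii)$.} Let $\x\ne\y$ and pick $\bfu\in\nx$, $\bfv\in\ny$. Choose lifts $\lambda_1$ of $\x$ and $\lambda_2$ of $\y$ with $\lambda_1<\lambda_2<\lambda_1+\ell$, and $r_1:=\arg\bfu\in\tilde\bfn(\lambda_1)$, $r_2:=\arg\bfv\in\tilde\bfn(\lambda_2)$ accordingly; by monotonicity of $\tilde\bfn$ one has $r_1\le r_2\le r_1+2\pi$, whence $\arccos\langle\bfu,\bfv\rangle=\min\{r_2-r_1,\,2\pi-(r_2-r_1)\}$. By the discussion preceding the proposition (see \eqref{curvaturesmall}, \eqref{fixchange}, \eqref{subdiffcond}, together with the Lebesgue decomposition of $\alpha_{+}$), the a.e.\ bound $\kappa\ge\kappa_0$ is equivalent to $\alpha_\pm(y)-\alpha_\pm(x)\ge\kappa_0(y-x)$ for all $x<y$; letting interior endpoints tend to $\lambda_1$ and $\lambda_2$ in $\alpha_-(\lambda_2)-\alpha_+(\lambda_1)\ge\alpha_-(\mu')-\alpha_-(\mu)\ge\kappa_0(\mu'-\mu)$ and using $\tilde\bfn_-\le\tilde\bfn_+$ gives $\tilde\bfn_-(\lambda_2)-\tilde\bfn_+(\lambda_1)\ge\kappa_0(\lambda_2-\lambda_1)$, and similarly around the complementary arc $\tilde\bfn_-(\lambda_1+\ell)-\tilde\bfn_+(\lambda_2)\ge\kappa_0(\ell-(\lambda_2-\lambda_1))$. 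Since $r_2-r_1\ge\tilde\bfn_-(\lambda_2)-\tilde\bfn_+(\lambda_1)$ and $2\pi-(r_2-r_1)\ge\tilde\bfn_-(\lambda_1+\ell)-\tilde\bfn_+(\lambda_2)$, and since the length of either arc joining $\x$ and $\y$ is at least the chord length $|\x-\y|\ge\tau$,
$$
\arccos\langle\bfu,\bfv\rangle\ \ge\ \kappa_0\min\{\lambda_2-\lambda_1,\ \ell-(\lambda_2-\lambda_1)\}\ \ge\ \kappa_0\,|\x-\y|\ \ge\ \kappa_0\,\tau .
$$
Taking the infimum over $\bfu,\bfv$ and over all such pairs gives $\Omega(\bfn,\tau)\ge\kappa_0\tau$.

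\emph{Where the difficulty lies.} In $(i)$ the delicate point is getting the clean constant $\kappa_0$ (rather than $\kappa_0+\varepsilon$) with a single uniform $\tau_0$: this is exactly where one uses that $\kappa$ exists at \emph{every} point, so that $\nu$ is genuinely differentiable on each sub-arc and the mean value theorem is available; an only almost-everywhere upper bound on $\kappa$ would be consistent with jumps of $\alpha$ and would fail. Dually, in $(ii)$ the subtle issue is the possible corners of $\gamma$, i.e.\ the multivaluedness of $\bfn$; one must make sure the turning $\beta$ concentrated at a corner is never lost, which works precisely because a corner has zero arc length, so the mixed inequality above retains the full factor $\kappa_0(\lambda_2-\lambda_1)$.
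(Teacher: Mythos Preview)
Your argument is correct and follows essentially the same route as the paper: in part (i) you use everywhere-differentiability of $\alpha$ to apply the mean value theorem and then compare arc length with chord length, and in part (ii) you invoke the equivalence \eqref{differentiallarge}--\eqref{fixchange} together with arc $\geq$ chord, exactly as the paper does. The only cosmetic difference is that for the arc-versus-chord estimate in (i) you project onto the chord normal $\bnu$ (as in the proof of Proposition~\ref{prop:limits}) whereas the paper projects onto the tangent $\t(\x)$; both yield the same bound $L\le \tau/\cos\omega(\bfn,\tau)$.
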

\begin{proof} Consider first ($i$). In this case $\bfn$ is a
single-valued function. Recall that $\alpha$ stands for the
\emph{tangent angle} function, and so with $\x=\gamma(s_0)$ and
$\y=\gamma(t)$ $\arccos \langle \nx,\ny \rangle
=\alpha(t)-\alpha(s_0)$, supposing that on the counterclockwise
closed arc $\x\y$ of $\gamma$ the rotation of the outer unit
normal vector is at most $\pi$. (In case of the rotation exceeding
$\pi$, the complementary arc must have rotation below $\pi$, and
considering the negatively oriented curve, i.e. a reflection of
$\gamma$, we can conclude the same way.) Since the curvature is
just $\kappa=\alpha'$ ($\alpha$ written in arc length
parametrization), by condition $\alpha$ is an \emph{everywhere
differentiable function} (with respect to arc length). Thus we can
apply the Lagrange mean value theorem to find some parameter $u\in
(s_0,t)$ satisfying
$$
\alpha(t)-\alpha(s_0)= \alpha'(u) (t-s_0).
$$
Now we can apply the condition $\alpha'=\kappa\leq \kappa_0$ to
get
\begin{equation}\label{omegakappa}
\arccos \langle \nx,\ny \rangle \leq \kappa_0 (t-s_0).
\end{equation}
It remains to estimate the arc length $t-s_0$ in function of
$\tau$.

Let now $\x,\y$ be two arbitrary points of $\gamma$ and consider
the counterclockwise arc of $\gamma$ between these points. Let us
suppose that this arc has total curvature less than $\pi/2$. Since
$\kappa$ exists and is bounded everywhere by $\kappa_0$, a
standard compactness argument yields $\omega(\bfn,\tau) < \pi/2$
for $\tau:=|\y-\x| \leq \tau_0$. As now $\nx$ is single-valued, we
have $\arg \t(\x) =\alpha(s_0)$ with the unique tangent vector at
$\x$, and we can write
\begin{align*}
t-s_0& =\int_{s_0}^t 1 ds \leq \int_{s_0}^t \frac{\cos(
\alpha(s)-\alpha(s_0))}{\cos (\alpha(t)-\alpha(s_0))} ds =
\frac{1}{\cos(\alpha (t)-\alpha(s_0))} \int_{s_0}^t \langle
\dot{\gamma}(s); \t(\x) \rangle ds \\
& \leq \frac {1}{\cos \omega(\bfn,|\y-\x|)} \left\langle
\int_{s_0}^t \dot{\gamma}(s)ds ; \t(\x) \right\rangle = \frac
{\left\langle \y-\x; \t(\x) \right\rangle}{\cos \omega(\bfn,\tau)}
\leq \frac{\tau}{\cos \omega(\bfn,\tau)}.
\end{align*}
On combining this with \eqref{omegakappa}, the assertion ({\it i})
follows.

To prove ({\it ii}) we still can use that $\alpha$ is a monotonic
function, hence is almost everywhere differentiable and, as
detailed above, for any $\bfu \in \nx$, $\bfv \in \ny$ we have
$$
\arccos \langle \bfu,\bfv \rangle =\arg \bfu - \arg \bfv \geq
\tilde{\bfn}_{-}(t)-\tilde{\bfn}_{+}(s_0) =
\alpha_{-}(t)-\alpha_{+}(s_0) \geq \int_{s_0}^t \alpha'(s) ds \geq
\kappa_0 (t-s_0)
$$
by condition of $\kappa = \alpha' \geq \kappa_0$ (linearly) a.e.
on $\gamma$. (As above, we may assume that $\arg{\bfu}-\arg{\bfv}$
does not exceed $\pi$, as otherwise we may consider the
complementary arc, i.e. the reflected curve with respect to the
line of $\x$ and $\y$, e.g.) It is obvious that the arc length of
$\gamma$ between $\x$ and $\y$ is at least the distance of $\x$
and $\y$, hence the assertion follows.
\end{proof}

Rotations of $\CC=\RR^2$ about the origin $O$ by the
counterclockwise measured (positive) angle $\varphi$ will be
denoted by $U_{\varphi}$, that is,
\begin{equation}\label{Ualpha}
U_{\varphi}=\left(%
\begin{array}{cc}
  \cos\varphi & -\sin\varphi \\
  \sin\varphi & \cos\varphi \\
\end{array}%
\right).
\end{equation}
We denote $T$ the reflection to the $y$-axis, i.e. the linear
mapping defined by $\left(%
\begin{array}{cc}
  -1 & 0 \\
  0 & 1 \\
\end{array}%
\right) $.

\begin{definition}[{\bf Mangled $n$-gons}]\label{def:M} Let $2\leq k\in\NN$ and
put $n=4k-4$, $\varphi^{*}:=\frac{\pi}{2k}$. We define the
\emph{standard mangled $n$-gon} as the convex $n$-gon
\begin{equation}\label{Mkdef}
M_k:=\con\{A_1,\dots,A_{k-1},A_{k+1},
\dots,A_{2k-1},A_{2k+1},\dots,A_{3k-1},A_{3k+1},\dots, A_{4k-1}\},
\end{equation}
of $n=4k-4$ vertices with
\begin{equation}\label{MAmdef}
A_m:= \left(\sum_{j=1}^m
\cos(j\varphi^{*})-\sum_{\ell=1}^{\Floor{m/k}} \cos(\ell
k\varphi^{*}) ,\sum_{j=1}^m
\sin(j\varphi^{*})-\sum_{\ell=1}^{\Floor{m/k}} \sin(\ell
k\varphi^{*})\right),
\end{equation}
where $m\in \{1,\dots,4k\}\setminus\{k,2k,3k,4k\}$. That is, we
consider a regular $4k$-gon of unit sides, but cut out the middle
"cross-shape" (i.e., the union of two rectangles which are the
convex hulls of two opposite sides of the regular $4k$-gon, these
pairs of opposite sides being perpendicular to each other) and
push together the left over four quadrants (i.e., shift the
vertices $A_{\ell k}$ to the position of $A_{\ell k-1}$
consecutively to join the remaining sides of the polygon. Observe
that taking $A_0:=O$, the same formula \eqref{MAmdef} is valid
also for $A_0:=O=A_{4k}=A_{4k-1}$ and $A_{\ell k}=A_{\ell k-1}$,
$\ell=1,2,3,4$, showing how the vertices of the regular $4k$-gon
were moved into their new positions.)

Now let $\tau>0$, $\al\in\RR$, $\x\in\RR^2$ and
$\varphi\in(0,\pi/4]$ be arbitrary. Take $k:=
\Floor{\frac{\pi}{2\varphi}}$, so that $\varphi^{*}:=
\frac{\pi}{2k}\geq \varphi$.

Then we write $M(\varphi):=M_k$, and, moreover, we also define
\begin{equation}\label{Mdef}
M(\x,\al,\varphi,\tau):=M(\x,\al,\varphi^{*},\tau):=U_{\al}\left(\tau
M_k\right)+\x,
\end{equation}
that is, the copy shifted by $\x$ of the $4k-4$-gon obtained by
dilating $M(\varphi)=M_k$ from $O=A_0=A_{4k-1}$ with $\tau$ and
rotating it counterclockwise about $O$ by the angle $\al$.
\end{definition}

E.g. if $\varphi\in(\pi/6,\pi/4]$, then $k=2$,
$\varphi^{*}=\pi/4$, $n=4$, and $M_2$ is just a unit square, its
side lines having direction tangents $\pm 1$ and having its lowest
vertex at $O$. It is the left over part, pushed together, of a
regular octagon of unit side length, when the middle cross-shape
is removed from its middle.

It is easy to see that the inradius $\rho(\varphi)$ and the
circumradius $R(\varphi)$ of $M(\varphi)=M(\varphi^{*})=M_k$ are
\begin{align}\label{Mradius}
\begin{cases} \qquad r(\varphi)&=\frac12
\left\{\cot\frac{\pi}{4k}-\sqrt{2}\cos\left(\frac{1-(-1)^k}{8k}\pi\right)\right\},
\\
\qquad R(\varphi)& =\frac 12
\left\{\cot\frac{\pi}{4k}-1\right\},\qquad\qquad\qquad
\end{cases}
\qquad\qquad \left(k:= \Floor{\frac{\pi}{2\varphi}}\right),
\end{align}
respectively.

Similarly to the \emph{mangled $n$-gons} $M_k$, we also define the
\emph{fattened $n$-gons} $F_k$.

\begin{definition}[{\bf Fattened $n$-gons}]\label{def:F} Let $k\in\NN$ and
put $n=4k$, $\varphi^{*}:=\frac{\pi}{2k}$. We first define the
\emph{standard fattened $n$-gon} as the convex $n$-gon
\begin{equation}\label{Fkdef}
F_k:=\con\{A_1,\dots,A_{k-1},A_k, A_{k+1},
\dots,A_{4k-1},A_{4k}\},
\end{equation}
of $n=4k$ vertices with
\begin{equation}\label{FAmdef}
A_m:= \left(\sum_{j=1}^m
\cos(j\varphi^{*})+\sum_{\ell=0}^{\Floor{m/k}} \cos(\ell
k\varphi^{*}) ,\sum_{j=1}^m
\sin(j\varphi^{*})+\sum_{\ell=0}^{\Floor{m/k}} \sin(\ell
k\varphi^{*})\right).
\end{equation}
That is, we consider a regular $4k$-gon , but fatten the middle
"cross-shape" to twice as wide, and move the four quadrants to the
corners formed by this width-doubled cross (i.e., shift the
vertices $A_{\ell k}$ to the position of $A_{\ell k-1}+2 (A_{\ell
k}-A_{\ell k-1})$ consecutively to join the remaining sides of the
polygon). Observe that $A_{4k-1}=(-1,0)$ and $A_{4k}=(1,0)$.

Let $\tau>0$, $\al\in\RR$, $\x\in\RR^2$ and $\varphi\in(0,\pi)$ be
arbitrary. Now we take $k:= \Ceil{\frac{\pi}{2\varphi}}$, whence
$\varphi^{*}:= \frac{\pi}{2k}\leq \varphi$.

Then we write $F(\varphi):=F_k$, and, moreover, we also define
\begin{equation}\label{Pdef}
F(\x,\al,\varphi,\tau):=F(\x,\al,\varphi^{*},\tau):=U_{\al}\left(\tau
F_k\right)+\x,
\end{equation}
that is, the copy shifted by $\x$ of the $4k$-gon obtained by
dilating $F(\varphi)=F_k$ from $O$ with $\tau$ and rotating it
counterclockwise about $O$ by the angle $\al$.
\end{definition}

E.g. if $\varphi\geq \pi/2$, then $k=1$, $\varphi^{*}=\pi/2$,
$n=4$, and $F_4$ is just the square spanned by the vertices (1,0),
(1,2), (-1,2), (-1,0) and having sides of length 2.

Observe that using the usual Minkowski addition, we can represent
the connections of these deformed $n$-gons and the regular $n$-gon
easily. Write $Q_n$ for the regular $n$-gon placed symmetrically
to the $y$-axis but above the $x$-axis with $O\in\partial Q_n$ a
midpoint (hence not a vertex) of a side of $Q_n$. (This position
is uniquely determined.) Also, denote the standard square as
$S:=Q_4:=\con\{(1/2,0);(1/2,1);(-1/2,1);(-1/2,0)\}$. Then we have
$M_k+S=Q_{4k}$ and $Q_{4k}+S=F_k$.

It is also easy to see that the inradius $\mathfrak r(\varphi)$
and the circumradius ${\mathfrak R}(\varphi)$ of
$F(\varphi)=F(\varphi^{*})$ are
\begin{equation}\label{Finradius}
{\mathfrak r}(\varphi)=\frac12 \cot\frac{\pi}{4k}+\frac12
\qquad\qquad\quad \left(k:= \Ceil{\frac{\pi}{2\varphi}}\right),
\end{equation}
and
\begin{equation}\label{Fcircumradius}
{\mathfrak R}(\varphi)=\begin{cases}
\frac1{2\sin\frac{\pi}{4k}}+\frac1{\sqrt{2}} \qquad\qquad
&\textrm{if}~~2\nmid k \\
\sqrt{\frac12 + \frac1{4 \sin^2\frac{\pi}{4k}}
+\frac1{\sqrt{2}}\cot\frac{\pi}{4k}} ~~&\textrm{if}~~2\mid k
\end{cases}\qquad \left(k:= \Ceil{\frac{\pi}{2\varphi}}\right),
\end{equation}
respectively.

The actual values of the above in- and circumradii in
\eqref{Mradius}, \eqref{Finradius}, \eqref{Fcircumradius} are not
important, but observe that for $\varphi\to 0$, or, equivalently,
for $k\to\infty$, we have the asymptotic relation $r(\varphi)\sim
R(\varphi)\sim \mathfrak{r}(\varphi) \sim
\mathfrak{R}(\varphi)\sim \frac{1}{\varphi}$.

\section{The discrete Blaschke theorems}\label{sec:discreteresults}

\begin{theorem}\label{th:inscribed} Let $K\subset \CC$ be a convex
body and $0<\varphi<\pi/4$. Denote $\bfn$ the (multivalued)
function of outer unit normal(s) to the closed convex curve
$\gamma:=\partial K$ and assume that $\omega(\bfn,\tau)\leq
\varphi <\pi/4$. Put $k:=\Floor{\frac{\pi}{2\varphi}}$. If
$\bfx\in\partial K=\gamma$, and
$\bfn_0=(\sin\alpha,-\cos\alpha)\in \nx$ is outer unit normal to
$\gamma$ at $\x$, then $M(\x,\al,\varphi,\tau) \subset K$.
\end{theorem}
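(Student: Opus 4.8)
The plan is to reduce the claim $M(\x,\al,\varphi,\tau)\subset K$ to a statement about supporting lines, and then verify that statement side by side: the boundary $\partial M$ of the mangled $n$-gon consists of segments whose outer normals are exactly the directions $\bfn_0$ rotated by the multiples $j\varphi^{*}$, $j=0,1,\dots,4k-1$ (with the four "cross" directions $\pm\bfn_0$ and $\pm i\bfn_0$ omitted, because those sides were collapsed), and I must show each of these segments lies inside $K$. Since $K$ is convex, it suffices to show that for every vertex $V=U_\al(\tau A_m)+\x$ of $M(\x,\al,\varphi,\tau)$ we have $V\in K$; convexity of $K$ then gives $\con\{\text{vertices}\}=M(\x,\al,\varphi,\tau)\subset K$. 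So the whole problem is: locate each vertex $A_m$ (in the standard, unrotated, unit-scale picture) relative to the curve and show the scaled, rotated, translated copy does not leave $K$.

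First I would set up the correspondence between the polygon's vertices and arc length along $\gamma$. Starting from $\x=\gamma(s_0)$ with the chosen normal $\bfn_0\in\bfn(\x)$, walk counterclockwise along $\gamma$; because $\omega(\bfn,\tau)\le\varphi\le\varphi^{*}$, any arc of chord-length at most $\tau$ rotates the normal by at most $\varphi^{*}$. Inductively define points $\x_0=\x,\x_1,\x_2,\dots$ on $\gamma$ by letting $\x_{j+1}$ be the first point after $\x_j$ (counterclockwise) at which some supporting normal has turned by exactly $j\varphi^{*}$ from $\bfn_0$ — i.e. $\arg\bfn$ has reached $\arg\bfn_0+j\varphi^{*}$ in the lifted sense. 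The key monotonicity input is Proposition~\ref{prop:limits}'s ingredients together with the hypothesis: consecutive such points satisfy $|\x_j-\x_{j+1}|\ge\tau$. Indeed if $|\x_j-\x_{j+1}|<\tau$ then, since on the arc from $\x_j$ to $\x_{j+1}$ the normal turns by exactly $\varphi^{*}\ge\varphi$, we would get $\omega(\bfn,\tau)\ge\varphi^{*}>\varphi$ unless $\varphi^{*}=\varphi$, and in the borderline case one still gets the non-strict inequality $\omega(\bfn,\tau)\ge\varphi^{*}\geq\varphi$, which is all one needs; more carefully, the hypothesis $\omega(\bfn,\tau)\le\varphi\le\varphi^{*}$ forces the chord over any arc whose normal turns by $\varphi^{*}$ to have length $\ge\tau$. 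Hence the chords $\x_j\x_{j+1}$ have length at least $\tau$, while their directions are controlled: the chord $\x_j\x_{j+1}$ has its (inward) normal $\bnu_j$ with $\arg\bnu_j\in[\arg\bfn_0+(j-1)\varphi^{*},\arg\bfn_0+j\varphi^{*}]$ by the integral argument $\x_{j+1}-\x_j=\int i\bfn(\sigma)d\sigma$ used in the proof of Proposition~\ref{prop:limits}.

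Next I would compare the "staircase" $\x_0\x_1\cdots$ to the polygon $M$. The vertices $A_m$ of the standard mangled $n$-gon, by the explicit formula \eqref{MAmdef}, are partial sums of the unit vectors $(\cos j\varphi^{*},\sin j\varphi^{*})$ with the four cross-directions deleted; thus $\tau$ times this, rotated by $\al$, translated by $\x$, is precisely the staircase one gets by laying down, from $\x$, consecutive segments of length exactly $\tau$ in the directions $U_{\al}(\cos j\varphi^{*},\sin j\varphi^{*})$ with $j$ ranging over $\{1,\dots,4k\}\setminus\{k,2k,3k,4k\}$. Because each true chord $\x_j\x_{j+1}$ has length $\ge\tau$ and direction within $\varphi^{*}$ of the corresponding polygon edge's direction, a convexity/ordering argument shows that each polygon vertex $V_m=U_\al(\tau A_m)+\x$ lies in the convex hull of $\{\x_0,\x_1,\dots,\x_{4k}=\x_0\}$, which is a subset of $K$ since each $\x_j\in\gamma\subset K$. (One closes the loop: after $4k$ steps the normal has turned by $2\pi$, so $\x_{4k}=\x_0$, consistent with $A_{4k}=A_0=O$.) Then $M(\x,\al,\varphi,\tau)=\con\{V_m\}\subset\con\{\x_j\}\subset K$.

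\textbf{Main obstacle.} The delicate point is the geometric comparison in the last paragraph: showing that the polygon vertices stay inside the convex polygon $\con\{\x_j\}$ when the true chords are \emph{longer} than $\tau$ and only \emph{approximately} aligned with the model directions. This is exactly where the specific shape of the mangled $n$-gon matters — the cross-shape was cut out and the quadrants pushed together precisely so that "longer and slightly rotated" edges still fit inside. I would handle it by arguing quadrant by quadrant: on each of the four arcs where the normal turns through a total of $\pi/2$, the true polygonal path $\x_j\to\x_{j+1}$ stays on the far side (from $\x$) of the model path, because each real step is at least as long as and turns no faster than the model step; a careful induction on $j$ using the convexity of $K$ and the sign of the cross products $(\x_{j+1}-\x_j)\wedge(\text{model direction})$ then pins down the inclusion. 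The treatment of the four "seam" vertices $A_{k\ell\pm 1}$, where two consecutive model directions are \emph{not} adjacent multiples of $\varphi^{*}$ (a cross direction having been skipped), needs separate attention but follows the same principle, since skipping a $\tau$-step in a collapsed direction only helps the inclusion.
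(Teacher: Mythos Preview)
Your overall strategy---define landmark points $\x_j\in\gamma$ where the lifted normal reaches $\arg\bfn_0+j\varphi^{*}$, note that consecutive chords have length $\ge\tau$, and then try to prove $M_k\subset\con\{\x_0,\dots,\x_{4k-1}\}$---is natural, but it is genuinely different from what the paper does, and your step~3 is not a sketch of a proof but a restatement of the problem.

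Concretely: the model edge $A_{j-1}A_j$ has length exactly $\tau$ and direction exactly $j\varphi^{*}$, while the real chord $\x_{j-1}\x_j$ has length $\ge\tau$ and direction anywhere in $[(j-1)\varphi^{*},\,j\varphi^{*}]$. So each real step is at least as long as the model step, but its direction may lag by a full $\varphi^{*}$. From this alone you cannot conclude that $A_m\in\con\{\x_j\}$: the $x$-coordinate comparison goes the right way (real path moves at least as far right), but the $y$-coordinate comparison is ambiguous, and you never address why the model vertex stays on the correct side of the lower boundary of $\con\{\x_j\}$. Your phrase ``a convexity/ordering argument shows\dots'' is where the entire content of the theorem lives, and your quadrant-by-quadrant sketch (``real path stays on the far side because each step is at least as long and turns no faster'') does not pin down which side of which path is meant, nor why the seam vertices $A_{k-1},A_{k+1},\dots$ are handled by ``skipping a step only helps.'' This last point is exactly the reason the polygon is \emph{mangled}: the slack created by removing the cross is what absorbs the possible $\varphi^{*}$-lag in direction, and making that precise is the heart of the matter.

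The paper avoids this global comparison entirely. It parametrizes the lower-right arc of $\gamma$ as a graph $y=g(x)$, defines the piecewise-linear function $f$ whose graph is the broken line $L=OA_1\cdots A_{k-1}$, and proves $g\le f$ on $[0,a_{k-1}]$ by induction on the edges of $L$: on each subinterval $[a_{m-1},a_m]$ one shows $\gamma$ stays inside a small triangle $\Delta_m$ of hypotenuse $\tau$, and \emph{because} it stays in $\Delta_m$ it remains within distance $\tau$ of the previous reference point $P_{m-1}$, so by the hypothesis $\omega(\bfn,\tau)\le\varphi$ the slope cannot exceed $\tan(m\varphi^{*})$. This bootstrapping (being in the triangle $\Rightarrow$ close to $P_{m-1}$ $\Rightarrow$ slope bounded $\Rightarrow$ stays in the next triangle) is the actual mechanism, and it has no counterpart in your outline. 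The four quadrants are then glued not via a single convex hull but via separate applications of the lemma from $O$, from $P$, and from $\widetilde P$, together with a translation argument showing $L^{+}\subset\con\{A_{k-1},A_{2k-1},L^{+}+(u,v)\}$ for suitable $u,v\ge0$.

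A smaller point: your chord-length claim $|\x_{j-1}-\x_j|\ge\tau$ is not quite airtight when $\varphi=\varphi^{*}$. If the chord were $<\tau$ you only deduce $\varphi^{*}\le\omega(\bfn,\tau)\le\varphi\le\varphi^{*}$, i.e.\ equality throughout, which is not a contradiction.
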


\begin{proof} Because $\varphi \leq \varphi^{*}:=\pi/(2k)$ and
$M(\x,\al,\varphi,\tau)=M(\x,\al,\varphi^{*},\tau)$, it suffices
to present a proof for the case when
$\varphi=\varphi^{*}=\frac{\pi}{2k}$.

Applying simple transformations we may reduce to the case $\x=O$
and $\alpha=0$, $\tau=1$. With these restrictions we are to prove
$M_k\subset K$, where $O\in K=\partial \gamma$, $(0,-1)$ is an
outer normal to $K$ at $O$, and $\omega(\bfn,1)\leq \varphi$.
Denote $P=(a,b)$ the first point, along $\gamma$ following $O$
counterclockwise, satisfying that $(1,0)$ is outer normal to $K$
at $P$. Clearly, then $\gamma$ can be parameterized with the
$x$-values along the $x$-axis so that $\gamma(x)=(x,g(x))$ for
values $x\in [0,a]$, and $g$ is a convex function on $[0,a]$.

Consider $A_m=(a_m,b_m)$ defined in \eqref{MAmdef} for
$m=0,\dots,k-1$, putting here $A_0:=A_{4k-1}=O$, and consider the
function
\begin{equation}\label{fdef}
f(x):=\begin{cases} \dots \\ (x-a_{m-1})\tan \frac{m\pi}{2k}
+a_{m-1}
\quad (a_{m-1}\leq x \leq a_m)   \\
\dots
\end{cases}\quad (m=1,\dots,k-1).
\end{equation}
Moreover, denote the broken line joining $O=A_0,A_1,\dots,A_{k-1}$
as $L$, that is,
\begin{equation}\label{Ldef}
L:= \{ (x,f(x))~:~ 0\leq x \leq a_{k-1} \}.
\end{equation}
\begin{lemma}\label{lemma:omega} Let $O\in \gamma=\partial K$,
$(0,-1)$ is outer normal to $K$ at $O$, and $\omega(\bfn,1)\leq
\varphi=\frac{\pi}{2k}$. With the notations above, we have
\begin{enumerate}
\item[\rm (i)] $a\geq a_{k-1}=\frac12 (\cot\frac{\pi}{4k} -1)(=R(\varphi))$.
\item[\rm (ii)] $0\leq g(x) \leq f(x)$ for all $x\in [0,a_{k-1}]$.
\item[\rm (iii)] $g'_{\pm}(x) \leq f'_{\pm}(x)$ for all $x\in (0,a_{k-1})$
and $g'_{+}(0) \leq f'_{+}(0)$, $g'_{-}(a_{k-1}) \leq
f'_{-}(a_{k-1})$.
\item[\rm (iv)] $b:=g(a)\geq a_{k-1}$.
\item[\rm (v)] $L\subset K$.
\end{enumerate}
\end{lemma}
\begin{proof} Let $a^{*}:=\min(a,a_{k-1})$. We argue by induction on $m$,
where $m=1,\dots k-1$, and the inductive assertions will comprise
\begin{enumerate}
\item[\rm (i')] $a^{*}\geq a_m$;
\item[\rm (ii')] $0\leq g(x) \leq f(x)$ for all $x\in
[a_{m-1},a_{m}]$;
\item[\rm (iii')] $g'_{\pm}(x) \leq f'_{\pm}(x)$ for all $x\in
(a_{m-1},a_{m})$ and $g'_{+}(a_{m-1}) \leq f'_{+}(a_{m-1})$,
$g'_{-}(a_{m}) \leq f'_{-}(a_{m})$.
\end{enumerate}
Clearly, if we show this for all $m=1,\dots ,k-1$ then (i)-(iii)
of the Lemma will be proved.

Let us start with $m=1$. Since $O=(0,0)\in\gamma$, we have
$g(a_{m-1})=g(0)=0\leq f(a_{m-1})=f(0)=0$. Let $S:=\{x\in [0,a_1]
~:~ g|_{[0,x]} \leq f|_{[0,x]} \}$. Clearly, $S$ is a (possibly
degenerate) closed interval with left end point 0, say $[0,X]$.
Our aim is to prove that $S=[0,a_1]$. Clearly, if $X=a_1\in S$,
then a relative neighborhood of $X$ belongs to $S$, too. We prove
the same thing for any other $X\in S$. Observe that the distance
of $O=A_0$ and $A_1$ is 1, and all other points of the triangle
$\Delta:=\Delta (O,(a_1,0),A_1)$ are closer than 1 to $O=A_0$. In
particular, in case $X<a_1$, both $\{(x,g(x))~:~ 0\leq x \leq X\}$
and also a small neighborhood of $(X,g(X))\in \Delta$ is also
closer to $O$ than 1. It follows that the continuous curve
$\gamma$ runs in the 1-neighborhood of $O$ even in an
appropriately small neighborhood of $(X,g(X))\in\gamma$.
Therefore, by assumption on the change of the normal to $\gamma$,
the vector $(0,1)$ in the counterclockwise taken angular region
between the left and right hand side half-tangents (oriented
according to the positive orientation of $\gamma$) to $\gamma$ at
$O$, cannot rotate over $(\cos \varphi, \sin \varphi)$ along
$\{(x,g(x))~:~ 0\leq x \leq X+\eta\}$ for some positive value of
$\eta$. That is, $a^{*}\geq X+\eta$ and the representation
$\gamma(x)=(x,g(x))$ is valid for $x\in [0,X+\eta]$; moreover,
$g'_{\pm}(x) \leq \tan \varphi$ for all $x\in [0,X+\eta]$. In
conclusion, $g(x)=\int_0^x g'(\xi) d\xi \leq x \cdot \tan \varphi
= f(x)$ for all $x\in [0,X+\eta]$. As a result, we find that $S$
is relatively open. As it is also closed and nonempty, it is the
whole interval $[0,a_1]$. This proves (i') and (ii') for $m=1$,
and (iii') follows from the fact that $\{(x,g(x))~:~ 0\leq x \leq
a_1 \}\subset \Delta$ and thus the distance of any point of
$\{(x,g(x))~:~ 0\leq x \leq a_1 \}$ from $O$ is at most 1.

We proceed by induction. Let $1<m<k$ and assume the assertion for
all $m'<m$. Then from the inductive hypothesis $a^{*}\geq
a_{m-1}$, $\mu:=\mu_{m-1}:=g'_{-}(a_{m-1}) \leq
f'_{-}(a_{m-1})=\tan ((m-1)\varphi)$ and $g(a_{m-1}):=y_{m-1} \leq
f(a_{m-1})=b_{m-1}$. Consider now the function
$h(x):=y_{m-1}+(x-a_{m-1})\tan(m\varphi)$ (defined for $x\in
I_m:=[a_{m-1},a_m]$), denote the points
$P_{m-1}:=(a_{m-1},y_{m-1})$ and $P_m:=(a_m,h(a_m))$, and define
the triangle $\Delta:=\Delta_m:=\Delta(P_{m-1},(a_m,y_{m-1}+\mu
\cos (m\varphi)),P_m)$. Then $h=f|_{I_m}-(b_{m-1}-y_{m-1})\leq
f|_{I_m}$, and $h'_{\pm}=f'_{\pm}$ on $I_m$.

Our aim now is to show that $\gamma$ proceeds inside
$\Delta=\Delta_m$. Observe that for points $Q$ inside $\Delta$ we
have $|Q-P_{m-1}|\leq 1$, with equality holding only if $Q=P_m$.
Therefore, for $Q\in\gamma \cap \Delta$ the right half-tangent
direction to $\gamma$ cannot exceed $\arctan \mu + \varphi\leq
m\varphi$, and, moreover, the same properties hold even for a
relative neighborhood of $Q$ on $\gamma$ if $Q\ne P_m$.

So we proceed similarly to the case $m=1$. It is obvious that
$a^{*}>a_{m-1}$ as $\gamma$ proceeds between slopes $\mu$ and
$\tan (m\varphi)$ in the 1-neighborhood of $P_{m-1}$. Take
$S:=S_m:=\{x\in I_m~:~ g|_{[a_{m-1},x]}\leq h|_{[a_{m-1},x]} \}$.
Again, by continuity of $g$ and linearity of $h$ $S$ is a closed
interval $[a_{m-1},X]$, say. Also, if $X=a_m$, then $S=I_m$ and so
$S$ is relatively open in $I_m$, and if $a_m\ne X\in S$, then
$(X,g(X))\in\gamma\cap \Delta$ has a small neighborhood where
$\gamma$ stays within the 1-neighborhood of $P_{m-1}$, therefore
its slope is below $\tan(\arctan \mu + \varphi)$ and
$\gamma(x)=(x,g(x))$ extends even until some $X+\eta$; moreover,
$a^{*}\geq X+\eta$ and $\mu \leq g'_{\pm} \leq \tan( \arctan \mu +
\varphi)\leq \tan(m\varphi)$ holds all over $[a_{m-1},X+\eta]$
(where for $a_{m-1}$ and $X+\eta$ we claim only the inequalities
for $g_{+}$ and $g_{-}$, resp.), proving
\begin{equation}\label{mthequation}
\mu (x-a_{m-1})+y_{m-1} \leq g(x) =\int_{a_{m-1}}^x g'(\xi)
d\xi+y_{m-1} \leq \tan (m\varphi)(x-a_{m-1})+y_{m-1} =h(x)
\end{equation}
for all $a_{m-1}\leq x \leq X+\eta$. That is, $\gamma$ stays
inside $\Delta$ and $S$ contains a small neighborhood of $X$, too.
It follows that $S\ne \emptyset$ is open and closed, while $I_m$
is connected, thus $S_m=I_m$ and \eqref{mthequation} holds true
even for the whole of $I_m$. This proves (i')-(iii'), hence
(i)-(iii) of the Lemma.

Applying the above we find $a>a_{k-1}$. However, a simple argument
immediately gives also $b>a_{k-1}$, too. Indeed, it suffices to
consider the new curve
$\widehat{\gamma}:=T(U_{-\pi/2}(\gamma-(a,b))$, obtained from
$\gamma$ first shifting it by $-P=-(a,b)$, then rotating it by
$-\pi/2$ about $O$, and finally reflecting it at the $y$-axis.
This shows (iv).

Also, applying the Lemma for the reflected curve
$\widetilde{\gamma}$ of $\gamma$ with respect to the $y$-axis
gives a similar result for the part of $\gamma$ towards the
"negative $x$-direction". That is, we find that $\gamma$ joins the
points $\widetilde{P}=(\widetilde{a},\widetilde{b})$ and $P=(a,b)$
with (some of their) outer unit normals $(-1,0)$ and $(1,0)$,
respectively, so that the part strictly between these points (and
containing $O$) does never have horizontal normals, and we have a
parametrization $\gamma(x)=(x,g(x))$ for all $\widetilde{a}\leq
x\leq a$ with $\widetilde{a}\leq -a_{k-1}$, $a \geq a_{k-1}$, and
$0\leq g(x) \leq f(|x|)$, $|g'_{\pm}(x)|\leq f'_{\pm}(|x|)$ for
all $x\in[-a_{k-1},a_{k-1}]$. Note that we also have
$\widetilde{b}\geq a_{k-1}$, as above.

Finally let us show (v). Consider any point $(x,f(x))$ of $L$,
where $x\in[0,a_{k-1}]$. There is a vertical line $\ell$ through
it that intersects $K$ in a vertical chord $C$ of $K$. The lower
endpoint of $C$ is $(x,g(x)$. The upper endpoint of $C$ has second
coordinate at least $\min\{b,\tilde{b}\}\geq a_{k-1}$. Hence the
point $(x,f(x))$ lies on the chord $C$ of $K$, whence in $K$. This
proves (v).

\end{proof}

\emph{Continuation of the proof of Theorem \ref{th:inscribed}.}
From the above argument --  or just reflecting $L$ to the $y$-axis
--  it is immediate that also the broken line $\widetilde{L}$
joining $A_{3k+1},\dots,A_{4k-1}=O$ in this order that lies on the
boundary of $M_k$ belongs to $K$, too. We are left with the upper
part joining
$A_{k-1},A_{k+1},\dots,A_{2k-1},A_{2k+1},\dots,A_{3k-1}$. Let
\begin{equation}\label{Lpmdef}
L^{+}:=[A_{k-1},A_{k+1}] \cup \dots \cup [A_{2k-2},A_{2k-1}],
\quad L^{-}:=[A_{2k-1},A_{2k+1}] \cup \dots \cup
[A_{3k-2},A_{3k-1}].
\end{equation}

Next, let us apply the Lemma to the curve from $P$ onwards in the
counterclockwise sense. That is, take
$K_{+}:=U_{-\pi/2}(K-P)$ (with $U_{\alpha}$ as defined in
\eqref{Ualpha} ) and $\gamma_{+}:=U_{-\pi/2}(\gamma-P)$ and check
that $O\in \gamma_{+}$ and also $\gamma_{+}$ has an outer normal
$(0,-1)$ at $O$; moreover, the same estimate on the modulus of
continuity of the normal holds for $\gamma_{+}$. Thus we obtain
that $L\subset K_{+}$, that is, $U_{\pi/2}(L)+P\subset K$. Observe
$U_{\pi/2}(L)=L^{+}-(a_{k-1},a_{k-1})$, which entails
$L^{+}+(a-a_{k-1},b-a_{k-1})\subset K$. It suffices to say that
$L^{+}+(u,v)\subset K$ with $u,v\geq 0$.

Very similarly (or from this and using reflection) we also obtain
$L^{-}+(p,q)\subset K$ with $p\geq 0$ and $q\leq 0$.

We claim that $A_{2k-1}=(0,2a_{k-1})\in K$. Indeed,
$A_{2k-1}+(0,-2a_{k-1})=O\in K$ and $A_{2k-1}+(u,v)\in
L^{+}+(u,v)\subset K$, $A_{2k-1}+(p,q)\in L^{-}+(p,q)\subset K$,
and the convex hull of the vectors $(0,-2a_{k-1})$, $(u,v)$ and
$(p,q)$ contains $(0,0)$, hence by convexity $A_{2k-1}\in K$.

Now, for showing $L^{+}\subset K$, recall that $A_{k-1}\in L
\subset K$, $A_{2k-1}\in K$, and $L^{+}+(u,v) \subset K$. So it
remains to see that $L^{+}$ is in the convex hull of its two
endpoints and the set $L^{+}+(u,v)$ whenever $u,v\geq 0$.
Similarly one obtains $L^{-}\subset K$. That concludes the proof.
\end{proof}

An even stronger version can be proved considering the modulus of
continuity $\tilo$ with respect to arc length. We thank this
sharpening to Endre Makai, who kindly called our attention to this
possibility and suggested the crucial Lemma \ref{lemma:triangle}
for the proof.

\begin{theorem}\label{th:arclength} Let $K\subset \CC$ be a
planar convex body and $0<\varphi<\pi/4$. Denote $\bfn$ the
(multivalued) function of outer unit normal(s) to the closed
convex curve $\gamma:=\partial K$ and assume that $\tilo(\tau)\leq
\varphi <\pi/4$. Put $k:=\Floor{\frac{\pi}{2\varphi}}$. If
$\bfx\in\partial K=\gamma$, and
$\bfn_0=(\sin\alpha,-\cos\alpha)\in\nx$ is outer unit normal to
$\gamma$ at $\x$, then $M(\x,\al,\varphi,\tau) \subset K$.
\end{theorem}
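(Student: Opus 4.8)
The plan is to mimic the proof of Theorem~\ref{th:inscribed} essentially verbatim, replacing the chord-length modulus of continuity $\omega(\bfn,\cdot)$ by the arc-length modulus $\tilo$ throughout, and noting that the only place where chord length was genuinely used was in the estimate $a^{*}\ge a_m$ controlling how far the parametrization $\gamma(x)=(x,g(x))$ extends. As before, it suffices to treat $\varphi=\varphi^{*}=\pi/(2k)$ and to normalize $\x=O$, $\al=0$, $\tau=1$, so that the goal is $M_k\subset K$ under the hypothesis that $(0,-1)\in\bfn(O)$ and $\tilo(1)\le\varphi$. The skeleton --- prove the analogue of Lemma~\ref{lemma:omega}, deduce $L\subset K$ and its reflection $\widetilde L\subset K$, then build the upper broken lines $L^{+},L^{-}$ by applying the lemma to the rotated bodies $K_{+}=U_{-\pi/2}(K-P)$ and its reflection, and finally recover $A_{2k-1}\in K$ and $L^{\pm}\subset K$ by the same convex-hull arguments --- carries over word for word, because those later steps never touch the distinction between arc length and chord length.

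The substantive change is inside the induction. In the chord-length proof one argued: if $(X,g(X))$ lies in the triangle $\Delta_m$ then $|(X,g(X))-P_{m-1}|\le 1$, hence by $\omega(\bfn,1)\le\varphi$ the normal cannot have rotated more than $\varphi$ past its value at $P_{m-1}$, hence the slope stays $\le\tan(m\varphi)$ on a further neighborhood. In the arc-length version I instead want to say: the \emph{arc length} of $\gamma$ from $P_{m-1}$ to $(X,g(X))$ is at most $1$, whence by $\tilo(1)\le\varphi$ the lift $\widetilde\bfn$ has increased by at most $\varphi$, giving the same slope bound. So I need an arc-length replacement for the geometric fact "$Q\in\Delta_m\Rightarrow|Q-P_{m-1}|\le1$": namely, that as long as $\gamma$ runs inside $\Delta_m$ starting from $P_{m-1}$, the arc length traversed is $\le1$. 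This is exactly the content of the promised Lemma~\ref{lemma:triangle} (attributed to Makai): for a convex arc issuing from a vertex $P_{m-1}$ of the triangle $\Delta_m=\Delta(P_{m-1},(a_m,y_{m-1}+\mu\cos(m\varphi)),P_m)$ and staying inside it, the arc length does not exceed the length $|P_m-P_{m-1}|=1$ of the side it must eventually reach --- plausibly because a convex arc inside a triangle, from one vertex, is shorter than the sum of the two sides through that vertex, and here one of those two sides is the unit segment $[P_{m-1},P_m]$ while the geometry of $\Delta_m$ (a very ``thin'' triangle, since $m\varphi-\arctan\mu\le\varphi$ is small) forces the other contribution to be harmless; more likely the clean statement is simply ``the length of a convex curve from $P_{m-1}$ to a point of $\Delta_m$ is at most $1$'', proved by a monotonicity/projection argument. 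Granting that lemma, the base case $m=1$ and the inductive step go through exactly as before: define $S_m=\{x\in I_m: g|_{[a_{m-1},x]}\le h|_{[a_{m-1},x]}\}$, show it is closed, nonempty, and relatively open using the slope bound $\mu\le g'_{\pm}\le\tan(m\varphi)$ obtained from the arc-length estimate, and conclude $S_m=I_m$, hence \eqref{mthequation}.

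The main obstacle is therefore precisely the geometric Lemma~\ref{lemma:triangle}: one must pin down the right statement (which triangle, which apex, bounding the arc length by $1$ rather than by something like the perimeter) and prove it, presumably by a convexity argument showing a convex arc trapped in a triangle and emanating from a vertex has length controlled by the side lengths, combined with the specific dimensions of $\Delta_m$ forcing that bound to be exactly $1$. Everything downstream --- parts (iv), (v) of the lemma via the auxiliary curves $\widehat\gamma=T(U_{-\pi/2}(\gamma-(a,b)))$ and $\widetilde\gamma$, and the final assembly of $L^{+},L^{-},A_{2k-1}$ into $M_k\subset K$ --- is then identical to the proof of Theorem~\ref{th:inscribed}, since $\tilo(\tau)\le\varphi$ implies $\omega(\bfn,\tau)\le\tilo(\tau)\le\varphi$ is not even needed: the arc-length bound is used directly and is the stronger hypothesis, which is why this theorem supersedes the previous one.
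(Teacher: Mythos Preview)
Your skeleton is right, and you have correctly located the single nontrivial change inside the inductive step of Lemma~\ref{lemma:omega}. But your guess at the content of Lemma~\ref{lemma:triangle} is backwards, and in the form you state it the lemma is false. You want ``a convex arc from $P_{m-1}$ lying in $\Delta_m$ has arc length $\le 1$''. Take $\mu=0$, $m\varphi=\pi/4$, so $\Delta_m$ has legs of lengths $\cos(\pi/4)$ and $\sin(\pi/4)$ and hypotenuse $1$; a convex arc hugging the two legs has length close to $\cos(\pi/4)+\sin(\pi/4)=\sqrt2>1$. More concretely, the piecewise-linear convex function $g$ that is $0$ on $[0,X/2]$ and has slope $2$ on $[X/2,X]$ satisfies $g\le h$ on $[0,X]$ but has arc length $(X/2)(1+\sqrt5)>1$ already for $X<a_m$. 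Your open--closed argument would therefore stall: from $X\in S$ you cannot conclude the arc length up to $(X,g(X))$ is $<1$ without already knowing the slope bound $g'\le\tan(m\varphi)$, which is precisely what you are trying to derive.

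The paper's Lemma~\ref{lemma:triangle} runs the logic the other way and thereby avoids the circularity. One first uses $\tilo(1)\le\varphi$ directly: along the arc of $\gamma$ of arc length exactly $1$ from $P_{m-1}$, the tangent angle lies in $[\psi,\psi+\varphi]\subset[\psi,\lambda]$ with $\psi=\arctan\mu$, $\lambda=m\varphi$. The lemma then asserts that any convex arc of length $\rho=|P_{m-1}P_m|=1$ with tangent angles in $[\psi,\lambda]$ has endpoint with $x$-coordinate $\ge a_m$, and crosses the segment $QR$; the proof is the one-line integration $n-a_{m-1}=\int_0^\rho\cos\alpha(\sigma)\,d\sigma\ge\rho\cos\lambda=a_m-a_{m-1}$. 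This delivers $a^{*}\ge a_m$, $g\le h$ on $I_m$, and $g'_{\pm}\le\tan(m\varphi)$ on $I_m$ in one stroke, so in fact the open--closed argument becomes unnecessary in the arc-length version. Everything downstream is, as you say, identical to Theorem~\ref{th:inscribed}.
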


\begin{proof} We can repeat the argument yielding Theorem
\ref{th:inscribed} with the only change that in the inductive
argument for proving Lemma \ref{lemma:omega}, we have to use twice
(once for the case $m=1$ to start the inductive argument, and once
for general $m$) a slightly sharper geometric assertion to ensure
that even in this setting the boundary curve $\gamma$ of $K$ will
again proceed in the triangles $\Delta:=\Delta(O,(a_1,0),A_1)$ and
$\Delta:=\Delta_m:=\Delta(P_{m-1},(a_m,y_{m-1}+\mu \cos
(m\varphi)),P_m)$.

The general situation will be covered by the following lemma.

\begin{lemma}\label{lemma:triangle} Let $\Delta=\Delta(P,Q,R)$ be
the right- or obtuse triangle spanned by the points $P=(a,p)$,
$Q=(b,q)$ with $b>a$ and $q\geq p$, and $R=(b,r)$ with $r>q$.
Denote $\rho:=\sqrt{(b-a)^2+(r-p)^2}$ the length of the longest
side of $\Delta$, and let $\mu:=(q-p)/(b-a)$, resp.
$\nu:=(r-p)/(b-a)$ be the slopes of sides $PQ$ and $PR$,
respectively, with corresponding angles $\psi:=\arctan \mu$ and
$\lambda:=\arctan \nu$. Denote $\varphi:=\lambda-\psi$ the angle
of $\Delta$ at $P$.

Let $\Gamma$ be a convex curve of arc length $\rho$, connecting
the points $P$ and $N=(n,s)$ and having all its tangent vectors at
all points of $\Gamma$ (including the right half tangent at $P$
and the left half tangent at $N$) with angles between $\psi$ and
$\psi+\varphi=\lambda$. Then $n\geq b$, the only possibility for
equality is when $N=R$, otherwise $n>b$ and $\Gamma$ intersects
the vertical side of $\Delta$ at a mesh point $M=(b,m)$ with
$q\leq m <r$. Moreover, $s \in [p+\mu(n-a),r]$.
\end{lemma}
\begin{proof} By convexity, the non-empty valued, multivalued
tangent vector function $\t$ along $\Gamma$ is continuous (in the
weak sense) and nondecreasing, and also we have for the
multi-valued tangent angle function
$\widehat{\alpha}(\sigma)=\arg{\t}(\sigma)\in [\psi,\lambda]$ for
all $0\leq \sigma\leq \rho$, i.e. all along $\Gamma$. Therefore,
$$
(n-a,s-p)=N-P=\int_0^{\rho} \t(\sigma) d\sigma = \int_0^{\rho}
(\cos(\alpha(\sigma)),\sin(\alpha(\sigma))d\sigma,
$$
now neglecting the linearly $0$-measure set of points where $\t$
or $\widehat{\alpha}$ is indeed multi-valued. By condition we find
$n-a\geq \rho \cos \lambda=b-a$ and equality would imply $\cos
\alpha(\sigma)=\cos\lambda$ a.e., that is $\Gamma=[P,R]$.
Otherwise by $|\Gamma|=\rho$ and $n>b$ we surely have $s<r$.
Finally, for the directional tangent of the chord $[P,N]$ we see
$$
\frac{s-p}{n-a}=\frac{\int_0^\rho
\sin\alpha(\sigma)d\sigma}{\int_0^\rho \cos\alpha(\sigma)d\sigma}
\geq \frac{\int_0^\rho \sin \psi d\sigma}{\int_0^\rho \cos \psi
d\sigma} =\tan \psi =\mu.
$$
The Lemma follows.
\end{proof}
In applying the above lemma we start with the observation that by
condition $\tilo(1)\le \varphi$, the tangent angle of $\gamma $
can increase at most $\varphi$ along the part of $\gamma$ which is
closer than 1 to the point $O$ (in case $m=1$) or to $P_{m-1}$ (in
case of the inductive step with general $m$). Therefore,
proceeding along $\gamma$ with arc length 1 and denoting this arc
of $\gamma$ as $\Gamma$, we will have a convex curve, with tangent
angles between $\psi$ and $\psi+\varphi$, as in the above lemma.
Therefore, Lemma \ref{lemma:triangle} will ensure that the
argument goes through for proving the corresponding version of
Lemma \ref{lemma:omega} with $\omega(\bfn,1)$ replaced by
$\tilo(1)$. Otherwise the argument is the same.
\end{proof}

\begin{theorem}\label{th:outscribed}
Let $K\subset \CC$ be a (planar) convex body and $\tau>0$. Denote
$\bfn$ the (multivalued) function of outer unit normal(s) to the
closed convex curve $\gamma:=\partial K$ and assume that
$\Omega(\bfn,\tau)\geq \varphi$. Take
$k:=\Ceil{\frac{\pi}{2\varphi}}$. If $\bfx\in\partial K=\gamma$,
and $\bfn_0=(\sin\alpha,-\cos\alpha)\in \nx$ is normal to $\gamma$
at $\x$, then $F(\x,\al,\varphi,\tau) \supset K$.
\end{theorem}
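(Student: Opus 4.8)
The plan is, after reducing to standard position, to compare the support functions $h_K$ and $h_{F_k}$ in each of the $4k$ edge--normal directions of $F_k$, reconstructing the relevant boundary data of $K$ from ``landmark'' points where those normals first occur. Applying a rotation, a translation and a dilation by $1/\tau$ one reduces to $\x=O$, $\al=0$, $\tau=1$; and since $F(\x,\al,\varphi,\tau)=F(\x,\al,\varphi^{*},\tau)$ with $\varphi^{*}:=\pi/(2k)\le\varphi$, it suffices to prove $K\subset F_k$ under the weaker assumption $\Omega(\bfn,1)\ge\varphi^{*}$, with $O\in\gamma=\partial K$ and $(0,-1)\in\bfn(O)$. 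Put $\theta_j:=-\pi/2+j\varphi^{*}$ and $\bfn_j:=(\sin j\varphi^{*},-\cos j\varphi^{*})$, the outer normal of the $j$-th edge of $F_k$; since $F_k=\bigcap_{j=0}^{4k-1}\{z:\langle z,\bfn_j\rangle\le h_{F_k}(\bfn_j)\}$, it is enough to establish $h_K(\bfn_j)\le h_{F_k}(\bfn_j)$ for every $j$. For $j=0$ this is immediate: $(0,-1)\in\bfn(O)$ gives $K\subset\{y\ge0\}$, so $h_K(\bfn_0)=0=h_{F_k}(\bfn_0)$.

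The working tool is the contrapositive of the hypothesis: if $\x,\y\in\gamma$ carry outer normals that enclose an angle $<\varphi^{*}$, then $|\x-\y|<1$. Let $B_0$ be the left endpoint of the face of $K$ with outer normal $(0,-1)$ --- it contains $O$ and lies on the $x$-axis, so $B_0=(o_-,0)$ with $o_-\le0$ --- and for $1\le j\le2k$ let $B_j$ be the first point of $\gamma$, counterclockwise after $B_0$, admitting $\bfn_j$ as an outer normal; set $v_l:=B_l-B_{l-1}$. On the counterclockwise arc of $\gamma$ from $B_{l-1}$ to $B_l$ all (half-)tangent directions lie in $[(l-1)\varphi^{*},l\varphi^{*}]$, so $\arg v_l\in[(l-1)\varphi^{*},l\varphi^{*}]$; and the set of boundary points whose normal cone meets the half-open arc $[\theta_{l-1},\theta_l)$ of $S^1$ has diameter $\le1$ by the displayed implication (the strictness $<\varphi^{*}$ being restored by letting the normal tend to $\theta_l$ from below), so, since $B_{l-1}$ lies in that set and $B_l$ in its closure, $|v_l|\le1$.

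Now fix $1\le j\le2k$. As $B_j$ carries the normal $\bfn_j$ it lies on the supporting line of $K$ in direction $\bfn_j$, so, telescoping $B_j=B_0+\sum_{l=1}^{j}v_l$,
\[
h_K(\bfn_j)=\langle B_0,\bfn_j\rangle+\sum_{l=1}^{j}\langle v_l,\bfn_j\rangle
\;\le\;\sum_{l=1}^{j}\ \max_{\gamma\in[(l-1)\varphi^{*},\,l\varphi^{*}]}\sin(j\varphi^{*}-\gamma),
\]
because $\langle B_0,\bfn_j\rangle=o_-\sin(j\varphi^{*})\le0$ (for $0\le j\varphi^{*}\le\pi$) and $\langle v_l,\bfn_j\rangle=|v_l|\sin(j\varphi^{*}-\arg v_l)$. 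For $1\le j\le k$ that maximum is attained at $\gamma=(l-1)\varphi^{*}$, and the right-hand side becomes $\sum_{m=1}^{j}\sin(m\varphi^{*})=\langle A_{j-1},\bfn_j\rangle=h_{F_k}(\bfn_j)$, using $A_m=\sum_{i=0}^{m}(\cos i\varphi^{*},\sin i\varphi^{*})$ for $m\le k-1$. For $k<j\le2k$ the same inequality holds by a direct trigonometric computation based on $\sum_{i=0}^{k}\cos i\varphi^{*}=\sum_{i=0}^{k}\sin i\varphi^{*}=\mathfrak r(\varphi)$ (and is sharp at $j=2k$, both sides equalling $2\mathfrak r(\varphi)$). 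Finally, repeating the whole argument for $TK$, the mirror image of $K$ in the $y$-axis --- which again has $O$ on its boundary with outer normal $(0,-1)$ and still satisfies $\Omega\ge\varphi^{*}$ --- takes care of the directions $\bfn_j$ for $j=2k,\dots,4k$ via the reflection symmetry of $F_k$. Combining all cases, $h_K(\bfn_j)\le h_{F_k}(\bfn_j)$ for every edge, i.e. $K\subset F_k$, which unwinds to $F(\x,\al,\varphi,\tau)\supset K$.

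The step I expect to be the main obstacle is the trigonometric estimate for $k<j\le2k$, where the location of the termwise maximum migrates through $\pi/2$ and the resulting sum of sines must be matched against the explicit vertex coordinates of $F_k$; a subsidiary, more bookkeeping difficulty is the treatment of corners and faces of $K$ in defining the $B_j$ and $\arg v_l$, together with the limiting argument made necessary by the fact that consecutive landmarks carry normals at angular distance exactly (not strictly below) $\varphi^{*}$.
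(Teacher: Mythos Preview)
Your argument is correct and genuinely different from the paper's. The paper works locally: it parameterises the arc of $\gamma$ from $O$ to the first point $P$ with normal $(1,0)$ as a graph $(x,g(x))$, builds the piecewise-linear lower envelope $f$ of $F_k$, and proves by induction on the segments $[a_m,a_{m+1}]$ that $f\le g$ and $f'_{\pm}\le g'_{\pm}$ (Lemma~\ref{lemma:Omega}); it then needs a separate translation-monotonicity lemma (Lemma~\ref{lemma:K1plus}) to patch together four rotated/reflected copies of this estimate into $K\subset K_1\cap TK_1\cap SK_1\cap TSK_1=F_k$. Your approach is global: a single telescoping identity for $h_K(\bfn_j)$ in terms of landmark displacements $v_l$, with $|v_l|\le1$ and $\arg v_l$ controlled, reduces everything to a support-function comparison in the $4k$ edge directions. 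This bypasses the induction and the auxiliary translation lemma entirely, at the cost of a direct trigonometric check.

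Two remarks on the points you flagged. First, the ``main obstacle'' for $k<j\le 2k$ dissolves once you compute $h_{F_k}(\bfn_j)$ from the vertex $A_{j-1}$ (which for $k\le j-1\le 2k-1$ picks up the extra $(1,1)$ from the fattening): one gets $h_{F_k}(\bfn_j)=\sin(j\varphi^*)-\cos(j\varphi^*)+\sum_{m=1}^{j-1}\sin(m\varphi^*)$, while your upper bound simplifies to $1+\sum_{m=1}^{j-1}\sin(m\varphi^*)$, so the needed inequality is exactly $\sin(j\varphi^*)-\cos(j\varphi^*)\ge 1$, i.e.\ $\sqrt2\sin(j\varphi^*-\pi/4)\ge 1$, which holds precisely for $j\varphi^*\in[\pi/2,\pi]$. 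Second, your limiting argument for $|v_l|\le1$ is fine: pick $\y_n:=\gamma(s_l-1/n)$ with $s_l$ the arc-length parameter of $B_l$; then $\tilde\bfn_+(s_l-1/n)<\theta_l$, so $B_{l-1}$ and $\y_n$ carry normals at angle $<\varphi^*$, whence $|B_{l-1}-\y_n|<1$, and continuity of $\gamma$ gives $|B_{l-1}-B_l|\le1$. Note also that $\sin(j\varphi^*-\arg v_l)\ge0$ for $1\le l\le j\le 2k$, so replacing $|v_l|$ by $1$ is indeed an upper bound, a small point worth making explicit.
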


\begin{proof} Because $\varphi \geq \varphi^{*}:=\frac{\pi}{2k}$ and
$F(\x,\al,\varphi,\tau)=F(\x,\al,\varphi^{*},\tau)$, it suffices
to present a proof for the case when
$\varphi=\varphi^{*}=\frac{\pi}{2k}$.

Applying simple transformations we may reduce to the case $\x=O$
and $\alpha=0$, $\tau=1$. With these restrictions we are to prove
$F_k\supset K$, where $O\in K=\partial \gamma$, $(0,-1)$ is an
outer normal to $K$ at $O$, and $\Omega(\bfn,1)\geq \varphi$.

Denote $P=(a,b)$ the first point counterclokwise after $O$, along
$\gamma$, satisfying that $(1,0)$ is an outer unit normal to $K$
at $P$. Clearly, then $\gamma$ can be parameterized with the
$x$-values along the $x$-axis so that $\gamma(x)=(x,g(x))$ for
values $x\in [0,a]$, and $g$ is a convex function on $[0,a]$.

Note that in case $a=0$ we necessarily have $K\subset \{(x,y)~:~
x\leq 0\}$, and so the degenerate case becomes trivial as regards
proving $K\cap \{(x,y)~:~ x\geq 0\} \subset F_k \cap \{(x,y)~:~
x\geq 0\}$. Therefore, we can assume that we have the
non-degenerate case.

Similarly to \eqref{FAmdef}, we define $A_m=(a_m,b_m)$ for
$m=0,\dots,k$ (here $A_0:=(0,0)=O)$, and consider the function
\begin{equation}\label{secondfdef}
f(x):=\begin{cases} \dots \\ (x-a_m)\tan \frac{m\pi}{2k} +a_{m}
\qquad (a_m\leq x \leq a_{m+1}) \\
\dots
\end{cases} \qquad (m=0,\dots,k-1).
\end{equation}
Moreover, now $L$ will denote the broken line joining
$O=A_0,A_1,\dots,A_{k}, \frac12(A_k+A_{k+1})=(a_k,a_k)$ in this
order, that is,
\begin{equation}\label{LFdef}
L:= \{(x,f(x))~:~ 0\leq x \leq a_k \}\cup [A_k,(a_k,a_k)].
\end{equation}
We write
\begin{equation}\label{L1def}
L_1:= L \cup \{(x,0)~:~ x \leq 0\} \cup \{(a_k,y)~:~ y \geq a_k \}
.
\end{equation}
Then $\RR^2\setminus L_1$ will have two connected components; the
convex one will be denoted by $K_1$.

\begin{lemma}\label{lemma:Omega} Let $O\in K=\partial \gamma$,
$(0,-1)$ be an outer normal to $K$ at $O$, and $\Omega(\bfn,1)\geq
\varphi=\frac{\pi}{2k}$. With the notations above, we have
\begin{enumerate}
\item[\rm (i)] $a\leq a_k=\frac12 (\cot\frac{\pi}{4k} +1)(=
{\mathfrak r}(\varphi))$.
\item[\rm (ii)] $0\leq f(x) \leq g(x)$ for all for all $x\in
[0,a]$.
\item[\rm (iii)] $f'_{\pm}(x) \leq g'_{\pm}(x)$ $x\in (0,a)$ and
$f'_{+}(0)\leq g'_{+}(0)$, $f'_{-}(a)\leq g'_{-}(a)$.
\item[\rm (iv)] $b:=g(a)\leq a_{k}$.
\item[\rm (v)] $K \subset K_1$.
\end{enumerate}
\end{lemma}
\begin{proof} Since the degenerate case $a=0$ is trivial (observe
that (ii) is then undefinied, but cf. the paragraph before
\eqref{secondfdef} ), we assume $a>0$.

Let $a^{*}:=\min(a,a_k)$. We argue by induction on $m$, where
$m=0,\dots k-1$, and the inductive assertions will comprise
\begin{enumerate}
\item[\rm (i')] Either $a \leq a_m$ or both(ii') and (iii')
hold, where
\item[\rm (ii')] $0\leq f(x) \leq g(x)$ for all $x\in
[a_m,\min(a,a_{m+1})]$;
\item[\rm (iii')] $g'_{\pm}(x) \geq f'_{\pm}(x)$ for all
$x\in [a_m,\min(a,a_{m+1})]$ (except for $g'_{-}(0)$ and also for
$g'_{+}(a)$ if the second occurs).
\end{enumerate}
Clearly, if we show this for all $m=0,\dots,k-1$ then (i)-(iii) of
the Lemma will be proved.

Let us start with $m=0$. Since $O=(0,0)\in\gamma$, we have
$g(a_{m})=g(0)=0\geq f(a_{m})=f(0)=0$. Let $S:=\{x\in [0,a_1] ~:~
g|_{[0,x]} \geq f|_{[0,x]} \}$. Clearly, by continuity of $f$ and
$g$, $S$ is a closed interval with left endpoint 0. Our aim is to
prove that $S=[0,\min(a,1)]$. Indeed, since $f|_{[0,1]}\equiv 0$,
and as $(0,-1)$ is normal to $K$ at $O$, we must have $g(x)\geq 0$
for all $0\leq x \leq a$, as stated in (ii'). Moreover, since $g$
is a convex curve, $g'_{\pm}(x) \geq 0=f'_{\pm}(x)$ for all $x\in
(0,\min(a,1))$, and also $g'_{+}(0)\geq f'_{+}(0)=0$, furthermore,
$g'_{-}(\min(a,1))\geq f'_{-}(\min(a,1))$. It remains to show
$g'_{+}(1)\geq \tan \varphi = f'_{+}(1)$ in case $\min(a,1)=1$.
But in this case either $a=1$, and then $g'_{+}(1)$ does not exist
(and the case is listed as exceptional in (iii')), or in view of
$|O-(1,g(1))|\geq 1$ any point $(x,g(x))$ along $\gamma$ in the
counterclockwise sense after $(1,g(1))$ but before $P$ (that is,
with $1<x<a$) is of distance $>1$ from $O$, hence by condition its
any outer normal direction is at least $\varphi$ larger than that
of the outer normal $(0,-1)$ of $O$: it follows that $g'_{\pm}(x)
\geq \tan \varphi$ and thus $g'_{+}(1) \geq \tan \varphi=
f'_{+}(1)$.

We proceed by induction. Let $1\leq m<k$ and assume the assertion
for all $0\leq m'<m$. If $\min(a,a_m)=a$, then (i') holds and we
have nothing to prove. Let now $a^{*}_{m+1}:= \min(a,a_{m+1})$. If
$\min(a,a_m)=a_m<a$, then by the inductive assumption we must have
$g(a_m)\geq f(a_m)$ and $g'_{-}(a_m)\geq f'_{-}(a_m)$,
$g'_{+}(a_m)\geq f'_{+}(a_m)=\tan(m\varphi) \equiv
f'_{\pm}|_{(a_m,a_{m+1})}$. In view of convexity we thus obtain
$g'_{\pm}|_{(a_m,a^{*}_{m+1})}\geq g'_{+}(a_m)\geq
f'_{+}(a_m)=\tan(m\varphi) \equiv f'_{\pm}|_{(a_m,a^{*}_{m+1})}$
and by left continuity of the left hand derivative this extends to
$g'_{-}(a^{*}_{m+1})\geq f'_{-}(a^{*}_{m+1})$, too.
\comment{Moreover, we must also have $g'_{+}(a^{*}_{m+1})\geq
f'_{+}(a^{*}_{m+1})$ if $a^{*}_{m+1}<a_{m+1}$, since then
$f'_{+}(a^{*}_{m+1})=\tan(m\varphi) = f'_{-}(a^{*}_{m+1})$, as
before.} 
Furthermore, if $a^{*}_{m+1}=a_{m+1}$, i.e. $a<a_{m+1}$, then
$g'_{+}(a^{*}_{m+1})=g_{+}(a)$ does not exist (and is listed in
(iii') as exceptional). 
The only case remaining is when $a^{*}_{m+1}=a_{m+1}$, i.e. $a\geq
a_{m+1}$. Let first $a=a_{m+1}$. As before, in this case
$g'_{+}(a^{*}_{m+1})=g'_{+}(a)$ does not exist and is excepted in
(iii'). Let now $a>a_{m+1}$, and consider a small right
neighborhood $[a_{m+1}, a_{m+1}+\epsilon]$ of $a_{m+1}$ which is
contained fully in $[0,a)$. Then in this neighborhood the
parametrization $\gamma(x)=(x,g(x))$ extends for a small arc of
$\gamma$ in the counterclockwise sense from
$P_{m+1}:=(a_{m+1},g(a_{m+1}))$, hence for this arc the condition
on $\Omega$ can be applied. (We will use also the notations
$P_0,P_1,\dots,P_m$ defined analogously as $P_k:=(a_k,g(a_k))$,
$k=0,1,\dots,m$).

First we prove that $|P_m-P_{m+1}|\geq 1$, which will also imply
$|P_m-(x,g(x))|> 1$ for all $x\in [a_{m+1}, a_{m+1}+\epsilon]$,
too. For this purpose consider the line
$\ell(x):=P_m+(\tan(m\varphi)) (x-a_m)$ and let
$Q:=Q_m:=(a_{m+1},\ell(a_{m+1}))$. Note that between $a_m$ and
$a_{m+1}$ the line $\ell$ runs below the curve of $\gamma$, since
for any point $x$ between the endpoints $g'_{\pm}(x) \geq
f'_{\pm}(x)=\tan(m\varphi) = \ell'(x)$, and $g(a_m)=\ell(a_m)$. It
follows that $g(a_{m+1})\geq \ell(a_{m+1})$ and thus
$|P_{m+1}-P_{m}|^2\geq (a_{m+1}-a_m)^2+(\tan(m\varphi) \cdot
(a_{m+1}-a_m))^2 = 1$, as stated.

Hence $|P_m-(x,g(x))|> 1$ for all $x\in [a_{m+1},
a_{m+1}+\epsilon]$ holds and the $\Omega$-condition can be applied
to get $\arctan g'_{\pm}(x)\geq g'_{+}(a_m) +\varphi \geq
f'_{+}(a_m)+\varphi =(m+1)\varphi= \arctan f'_{\pm}(x)$. In view
of the right continuity of the right hand derivative, we thus
obtain $g'_{+}(a_{m+1}) \geq \tan ((m+1)\varphi) =
f'_{+}(a_{m+1})$, too.

Therefore, in case (i') does not hold, we conclude (iii'). Since
in this case we have $f(a_m)\leq g(a_m)$ by the inductive
hypothesis, a simple integration using (iii') proves also (ii').

Therefore, the inductive argument for (i')-(iii') concludes and we
obtain (i)-(iii) of the Lemma. It remains to show (iv) and (v). To
prove (iv), it suffices to consider the curve
$\widehat{\gamma}:=T(U_{-\pi/2}(\gamma-(a,b))$ $\gamma_1$ from the
proof of Lemma \ref{lemma:omega}, which will have
$\widehat{P}=(b,a)$ while satisfying all our requirements.

\comment{Let us prove (v). First of all, $[A_k,(a_k,a_k)]$ cannot
intersect $\intt K$, because already at the boundary point
$P=(a,b)\in\gamma=\partial K$ $K$ has outer normal $(1,0)$, and
$(i)$ provides $a\leq a_k$. Let us prove, that the other part of
$L$, defined as the graph of $f$ over $[0,a_k]$, is also disjoint
from $\intt K$. For the same reason as above, in case $a<a_k$ we
also have $\{(x,f(x))~:~ a\leq x \leq a_k \} \cap \intt K
=\emptyset$. Also, the first segment $[A_0,A_1]$ is trivial, since
$K$ has outer normal $(0,-1)$ at $O=A_0$. It remains to consider
the part of the graph of $f$ with $x$-coordinates between $1$ and
$a$ (if at all $a\geq 1$ -- if not, then there is nothing more to
prove).

Note that we have, by the already proven part $(ii)$ of the Lemma,
$f(x)\leq g(x)$ for all $x\in [0,a]$.

Take now a horizontal line $\ell(x)\equiv y$ with $0<y< d=f(a)\leq
f(a_k)=a_k-1$. Note that $P=(a,b)$ with $b=g(a)\geq f(a)=d$.

Hence there is an intersection point of $\ell$ and the arc of
$\gamma$ between $O$ and $P$ in the counterclockwise direction.
That is, we get some $x$-coordinate $0<x_{+}<a$, satisfying
$y=g(x_{+})$: because $0<y< d$, the intersection must occur after
$O$ but before $\gamma$ reaches $P$, that is, we have $x_{+}<a$.
By the above, $f(x_{+})\leq g(x_{+})=y$. Since $f$ is
nondecreasing, continuous and $f(a_k)=a_k-1$, $y<d\leq a_k-1$
ensures the existence of some $1<z<a_k$ with $f(z)=y$; moreover,
$z$ is unique because $f$ is strictly increasing in $(1,a_k)$, and
$x_{+}\leq z$ since the increasing function $f$ has value $\leq y$
at $x_{+}$.

Consider the point $P':=(x_{+},y)\in\gamma=\partial K$. Because of
convexity $K$ must have an outer normal vector $\bfn=(p,q)$ with
$p\geq 0 > q$ at $P'$, since it is between the points $O$ and $P$
having normals $(0,-1)$ and $(1,0)$, respectively. Observe that
since $y<d=f(a)$, $x_{+}<a$ and $P'\ne P$, so $P$ being the first
point on this arc with horizontal normal, we necessarily have
$q<0$. It follows that the supporting line at $P'$ to $K$ and with
normal $(p,q)$ cuts off the halfline $\ell_{+}:=\{ (x,y)~:~
x_{+}<x<\infty \}$ from $K$, and no point of $K$ can be to the
right of $P'$ on $\ell$.

On the other hand we have $f(z)=y$ and the point $(z,y)\in L$ lies
either on $\ell_{+}$, or is equal to $P'$ in view of $z\geq
x_{+}$. In any case, $(z,y)\notin \intt K$. This holds for all
$y\in(0,f(a))$, hence the -- strictly increasing in (1,a) --
function $f$ has graph $\{ (z,f(z))~:~ 1<z<a \}$ disjoint from
$\intt K$. The same holds for the closure $\{ (z,f(z))~:~ 1\leq z
\leq a \}$ of the graph, and we have already shown that the
remaining part of $L$ is disjoint from $\intt K$. This concludes
the proof of (v), too. 
} 

Finally, let us prove (v): clearly it suffices to prove $\intt
K\subset K_1$. Because at $O$ $K$ has an outer normal $(0,-1)$, we
have $\intt K\subset \{(x,y)~:~y>0\}$. Similarly, as at $P=(a,b)$
$K$ has an outer normal $(1,0)$, in view of Lemma
\ref{lemma:Omega} (i) we also have $\intt K\subset
\{(x,y)~:~x<a\}\subset \{(x,y)~:~x<a_k\}$.

In view of $\intt K\subset \{(x,y)~:~x<a\}$, it remains to show
that $(x,y)\in \intt K$, $0<x<a$ imply $y>f(x)$. However, the part
of $\partial K$ above the open segment $(O,(a,0))$ consists of two
open arcs, the lower one being $\{(x,g(x))~:~0<x<a\}$. Thus, for
$0<x<a$, $(x,y)\in\intt K$ we necessarily have $y>g(x)\geq f(x)$,
as was to be shown.
\end{proof}

\begin{lemma}\label{lemma:K1plus} Let $K,L,L_1,K_1$ as above. Let
$L_1+(u,v)$ a translate of $L_1$ such that $\intt K \subset
K_1+(u,v)$. Further, let $u'\geq u$ and $v'\leq v$. Then also
$\intt K\subset K_1+(u',v')$ holds.
\end{lemma}
\begin{proof} In fact, we are to prove that $K_1\subset
K_1+(w,z)$, with arbitrary $w \geq 0\geq z$. (Then this can be
applied with $(w,z)=(u'-u,v'-v)$ to get $K_1+(u,v)\subset
(K_1+(w,z))+(u,v)=K_1+(u',v')$.) Observe that the special cases
with one coordinate of the translation vector being zero already
suffice, for $K_1\subset K_1+(w,0)\subset
(K_1+(0,z))+(0,w)=K_1+(w,z)$ gives the general case, too. Also
observe that by symmetry of $K_1$ to the line $y=-x$, it suffices
to prove one such case, e.g. $K_1\subset K_1+(0,z)$. However, as
$K_1$ can be defined as the set of points above a function graph,
this last inclusion with$z\leq 0$ is evident.
\end{proof}

\emph{Continuation of the proof of Theorem \ref{th:outscribed}.}
Recall that $T$ is the reflection on the $y$-axis; let us
introduce also $S$ as the reflection on the line $y=a_k$.

From the above argument -- or just reflecting $L$ to the $y$-axis
-- it is immediate that we have also $K\subset TK_1$.

We are left with the upper part joining $\frac12(A_{k}+A_{k+1}),
A_{k+1}, \dots, A_{3k}, \frac12(A_{3k}+A_{3k+1})$. Let
\begin{align}\label{Lpmdefsecond}
L^{+}:&=\left[\frac{A_{k}+A_{k+1}}{2}, A_{k+1}\right] \cup
\bigcup_{m=k+1}^{2k-1} [A_{m},A_{m+1}] \cup
\left[A_{2k},\frac{A_{2k}+A_{2k+1}}{2}\right], \\
L^{-}:&=\left[\frac{A_{2k}+A_{2k+1}}{2}, A_{2k+1}\right] \cup
\bigcup_{m=2k+1}^{3k-1} [A_{m},A_{m+1}] \cup
\left[A_{3k},\frac{A_{3k}+A_{3k+1}}{2}\right].
\end{align}

Next, let us apply Lemma \ref{lemma:Omega} to the curve from
$P=(a,b)$ onwards to the counterclockwise sense. That is, take
$K_{+}:=U_{-\pi/2}(K-P)$ and $\gamma_{+}:=U_{-\pi/2}(\gamma-P)$
and check that $O\in \gamma_{+}$ and also $\gamma_{+}$ has normal
$(0,-1)$ at $O$; moreover, the same estimate on the minimal
oscillation of the normal holds for $\gamma_{+}$. Thus we obtain
that $K\subset S K_1+(a-a_k,b-a_k) \subset SK_1$, where the last
inclusion follows from $a,b\leq a_k$ and Lemma \ref{lemma:K1plus}.

\comment{ $\cap \intt K_{+}=\emptyset $, that is, $U_{\pi/2}(L)+P
\cap \intt K =\emptyset $. Observe $U_{\pi/2}(L)=L^{+}-(a_k,a_k)$,
which entails $L^{+}+(a-a_k,b-a_k)\cap \intt K =\emptyset $. It
suffices to say that $L^{+}+(u,v) \cap \intt K =\emptyset $ with
$u,v\leq 0$. 
} 

Very similarly (or from this and using reflection) we also obtain
\comment{ $L^{-}+(p,q)\cap \intt K =\emptyset$ with $p\geq 0$ and
$q\leq 0$. Here $(p,q)=(\widetilde{a}+a_k,\widetilde{b}-a_k)$ with
$\widetilde{P}=(\widetilde{a},\widetilde{b})$ being the first
point of $\gamma$ (from $O$ towards the clockwise direction) with
$\bfn(\widetilde{P})=(-1,0)$. 
}  
$K\subset TSK_1$. So putting together the four inclusions, we
obtain $K\subset K_1\cap TK_1\cap SK_1\cap TSK_1=F_k$, i.e.
$K\subset F_k$, and the proof concludes.

\comment{  Finally, let us use the segments $[(a_k,a_k),P]$ and
$[(-a_k,a_k),\widetilde{P}]$ to join the curves $\widetilde{L}$,
$L^{+}+(u,v)$ and $L^{-}+(p,q)$, respectively. Clearly, this way
we get a closed, simple Jordan curve $L^{*}$ (a closed, simple
broken line) which is contained in $F_k$ (which is the convex body
enclosed by $L$). Consider the domain $K^{*}$ inside the curve
$L^{*}$. Note that $(0,\epsilon)\in K^{*}\cap \intt K$ (for
$\epsilon$ small enough), hence $K^{*}\cap \intt K\ne \emptyset$.
However, we claim $L^{*} \cap \intt K =\emptyset$. Indeed, we have
already shown this for the parts $\widetilde{L}$, $L^{+}+(u,v)$
and $L^{-}+(p,q)$, and the segment $[(a_k,a_k),P]$ is fully to the
right, while the segment $[(-a_k,a_k),\widetilde{P}]$ is fully to
the left from $\intt K$ in view of the horizontal normals at $P$
and $\widetilde{P}$, respectively. Therefore, $\intt K \cap
L^{*}=\emptyset$, and it follows that $\intt K \subset \intt K^{*}
\subset F_k$, hence $K\subset F_k$, and the proof concludes. 
}   
\end{proof}

\section{Further consequences}\label{sec:further}

As the first corollaries, we can immediately deduce the classical
Blaschke theorems. We denote by $D(\x,r)$ the closed disc of
centre $\x$ and radius $r$.

\begin{proof}[Proof of Theorem
\ref{oldth:Blaschke}.] Let $\tau_0$ be the bound provided by (i)
of Proposition \ref{prop:bounds}. Under the condition, we find
(with $\omega(\bfn,\tau)<\pi/2$)
\begin{equation}\label{omegaestimate}
\omega(\bfn,\tau)\leq \frac{\kappa_0 \tau}
{\cos(\omega(\bfn,\tau))}=:\varphi(\tau) \qquad (\tau\leq \tau_0).
\end{equation}
Let us apply Theorem \ref{th:inscribed} for the boundary point
$\x\in\gamma$ with normal vector $\nx=(\sin\alpha,-\cos\alpha)$.
If necessary, we have to reduce $\tau$ so that the hypothesis
$\varphi(\tau)\leq \pi/4$ should hold. We obtain that the
congruent copy $U_{\alpha}(\tau M_k) + \x$ of $\tau M_k$ is
contained in $K$, where $k=\Floor{\pi/2\varphi(\tau)}$. Note that
$U_{\alpha}(\tau M_k) + \x\supset D(\z,\tau r(\varphi(\tau)))$,
where $\z=\x-\tau R(\varphi(\tau))\nx$. When $\tau\to 0$, also
$\varphi(\tau)\to 0$, therefore also $\omega(\bfn,\tau)\to 0$ in
view of \eqref{omegaestimate}, and we see
$$
\lim_{\tau\to 0} \left(\tau R(\varphi(\tau))\right) =
\lim_{\tau\to 0} \left( \tau r(\varphi(\tau))\right)=
\lim_{\tau\to 0} \frac{\tau}{\varphi(\tau)} = \lim_{\tau\to 0}
\frac{\cos(\omega(\bfn,\tau))}{\kappa_0}= \frac{1}{\kappa_0}.
$$
Note that we have made use of $\omega(\bfn,\tau)\to 0$ in the form
$\cos(\omega(\bfn,\tau))\to 1$. It follows that
$D(\x-\frac{1}{\kappa_0}\nx,\frac{1}{\kappa_0})\subset K$, whence
the assertion.
\end{proof}

Note that in the above proof of Theorem A we did not assume
$C^2$-boundary, as is usual, but only the existence of curvature
and the estimate $\kappa(\x) \leq \kappa_0$. So we found the
following stronger corollary (still surely
well-known).

\begin{corollary}\label{corollary:Astrong} Assume
that $K\subset \RR^2$ is a convex domain with boundary curve
$\gamma$, that the curvature $\kappa$ exists all over $\gamma$,
and that there exists a positive constant $\kappa_0>0$ so that
$\kappa\leq \kappa_0$ everywhere on $\gamma$. Then to all boundary
point $\x\in\gamma$ there exists a disk $D_R$ of radius
$R=1/\kappa_0$, such that $\x\in\partial D_R$, and $D_R\subset K$.
\end{corollary}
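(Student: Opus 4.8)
The plan is to obtain Corollary \ref{corollary:Astrong} as an immediate consequence of the already-proven Theorem \ref{oldth:Blaschke}, since the proof of the latter, as carried out above, never used the full strength of the $C^2$ hypothesis. Concretely, one inspects the argument given for Theorem \ref{oldth:Blaschke}: it relies only on part (i) of Proposition \ref{prop:bounds}, whose hypothesis is precisely that the curvature $\kappa(\x)$ \emph{exists} at every boundary point and is bounded above by $\kappa_0$ there. The proof of Proposition \ref{prop:bounds}(i), in turn, uses that $\alpha$ is an everywhere differentiable function with $\alpha'=\kappa\le\kappa_0$, and applies the Lagrange mean value theorem together with the length-versus-chord estimate; none of this needs continuity of $\kappa$. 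Hence the entire chain $\text{Prop.\ \ref{prop:bounds}(i)}\Rightarrow\text{Thm.\ \ref{th:inscribed}}\Rightarrow\text{Thm.\ \ref{oldth:Blaschke}}$ goes through verbatim under the weaker assumption.

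Thus the proof I would write is essentially one sentence: the statement of Corollary \ref{corollary:Astrong} is exactly what the proof of Theorem \ref{oldth:Blaschke} establishes, because that proof invoked only the existence of $\kappa$ everywhere and the bound $\kappa\le\kappa_0$, never the $C^2$-smoothness of $\gamma$. For completeness one might add a remark pointing to the precise places where $C^2$ could naively have been used — namely the differentiability of the tangent-angle function $\alpha$ and the identity $\kappa=\alpha'$ — and note that pointwise existence of $\kappa$ already yields everywhere differentiability of $\alpha$ with $\alpha'=\kappa$, which is all the Lagrange mean value step requires.

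There is essentially no obstacle here; the only thing to be careful about is the bookkeeping of exactly which hypotheses each earlier result consumed. In particular one must double-check that Theorem \ref{th:inscribed} itself is stated purely in terms of $\omega(\bfn,\tau)\le\varphi$ and makes no smoothness demand, and that the limiting computation $\lim_{\tau\to0}\tau R(\varphi(\tau))=\lim_{\tau\to0}\tau r(\varphi(\tau))=1/\kappa_0$ in the proof of Theorem \ref{oldth:Blaschke} used $\omega(\bfn,\tau)\to0$ only through $\cos(\omega(\bfn,\tau))\to1$, which follows from Proposition \ref{prop:bounds}(i) alone. Both checks are routine, so the corollary follows with no additional work.

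\begin{proof}[Proof of Corollary \ref{corollary:Astrong}.]
This is precisely what was established in the proof of Theorem \ref{oldth:Blaschke} given above: that argument used only part (i) of Proposition \ref{prop:bounds}, whose hypothesis is the mere existence of the curvature $\kappa(\x)$ at every point $\x\in\gamma$ together with the bound $\kappa\le\kappa_0$, and never the $C^2$-smoothness of $\gamma$. Indeed, pointwise existence of $\kappa$ already forces the tangent-angle function $\alpha$ to be everywhere differentiable with $\alpha'=\kappa\le\kappa_0$, so the Lagrange mean value step in the proof of Proposition \ref{prop:bounds}(i) applies, yielding $\omega(\bfn,\tau)<\pi/2$ and $\omega(\bfn,\tau)\le\kappa_0\tau/\cos(\omega(\bfn,\tau))$ for $\tau\le\tau_0$. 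From here the proof of Theorem \ref{oldth:Blaschke} proceeds verbatim: applying Theorem \ref{th:inscribed} at each boundary point $\x$ and passing to the limit $\tau\to0$ (using $\cos(\omega(\bfn,\tau))\to1$, which is guaranteed by the same estimate) gives $D(\x-\tfrac1{\kappa_0}\nx,\tfrac1{\kappa_0})\subset K$, which is the assertion.
\end{proof}
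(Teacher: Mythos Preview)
Your proposal is correct and matches the paper's approach exactly: the paper simply remarks that the proof of Theorem~\ref{oldth:Blaschke} already given used only the existence of $\kappa$ everywhere together with $\kappa\le\kappa_0$ (via Proposition~\ref{prop:bounds}(i)), never the $C^2$ hypothesis, so the corollary follows with no additional argument.
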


Similarly, one can deduce also the "dual" Blaschke theorem, i.e.
Theorem \ref{thold:Blaschkeout}, in a similarly strengthened form.
In fact, the conditions can be relaxed even further, as was shown
by Strantzen, see \cite[Lemma 9.11]{BS}. Our discrete approach
easily implies Strantzen's strengthened version, originally
obtained along different lines.

\begin{corollary}[\bf Strantzen]\label{th:Strantzen} Let $K\subset \RR^2$
be a convex body with boundary curve $\gamma$. Assume that the
(linearly) a.e. existing curvature $\kappa$ of $\gamma$ satisfies
$\kappa\geq \kappa_0$ (linearly) a.e. on $\gamma$. Then to all
boundary point $\x\in\gamma$ there exists a disk $D_R$ of radius
$R=1/\kappa_0$, such that $\x\in\partial D_R$, and $K\subset D_R$.
\end{corollary}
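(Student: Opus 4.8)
The plan is to deduce Corollary \ref{th:Strantzen} from Theorem \ref{th:outscribed} by a limiting argument entirely parallel to the proof of Theorem \ref{oldth:Blaschke} given above, only now using the circumscribed (``fattened'') polygons $F_k$ in place of the inscribed mangled polygons $M_k$, and the lower bound on $\Omega(\bfn,\tau)$ supplied by part $(ii)$ of Proposition \ref{prop:bounds} in place of the upper bound on $\omega(\bfn,\tau)$. First I would fix a boundary point $\x\in\gamma$ and a choice of outer unit normal $\bfn_0=(\sin\alpha,-\cos\alpha)\in\nx$. By Proposition \ref{prop:bounds}$(ii)$, the hypothesis $\kappa\geq\kappa_0$ (linearly) a.e.\ gives $\Omega(\bfn,\tau)\geq\kappa_0\tau=:\varphi(\tau)$ for \emph{every} $\tau>0$; note that here, unlike in the inscribed case, no compactness reduction to small $\tau$ and no correction factor $1/\cos(\omega)$ is needed, which actually makes this direction cleaner.

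Next I would apply Theorem \ref{th:outscribed} with this $\tau$ and $\varphi=\varphi(\tau)=\kappa_0\tau$ (restricting, if one wishes, to $\tau$ small enough that $\varphi(\tau)<\pi$, though $F(\varphi)$ is defined for all $\varphi\in(0,\pi)$ anyway), and with $k:=\Ceil{\pi/(2\varphi(\tau))}$. This yields $K\subset F(\x,\alpha,\varphi(\tau),\tau)=U_\alpha(\tau F_k)+\x$. Then I would insert the circumscribed disk of $F_k$: since $\mathfrak R(\varphi)$ is the circumradius of $F(\varphi)=F_k$ and $O\in\partial F_k$ is the prescribed boundary point with the normal $(0,-1)$, one checks from Definition \ref{def:F} that the centre of the circumscribing disk of $F_k$ lies at height $\mathfrak r(\varphi)$ above $O$ along the inner normal — more precisely $\tau F_k\subset D(\tau\,\mathfrak r(\varphi(\tau))\,\bfn_0^{\perp\text{-dir}},\tau\,\mathfrak R(\varphi(\tau)))$ after the rotation and translation this becomes $K\subset U_\alpha(\tau F_k)+\x\subset D(\x-\tau\,\mathfrak r(\varphi(\tau))\,\bfn_0,\ \tau\,\mathfrak R(\varphi(\tau)))$, with the centre sitting on the inward normal ray from $\x$ at distance $\tau\,\mathfrak r(\varphi(\tau))$. (Strictly one wants the \emph{smallest} enclosing disk centred on that normal ray through $\x$; its radius is at most $\tau\,\mathfrak R(\varphi(\tau))$ and I would simply use that upper bound.)

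Finally I would let $\tau\to0$. Using the asymptotics recorded after \eqref{Fcircumradius}, namely $\mathfrak r(\varphi)\sim\mathfrak R(\varphi)\sim 1/\varphi$ as $\varphi\to0$, and $\varphi(\tau)=\kappa_0\tau$, we get
\[
\lim_{\tau\to0}\tau\,\mathfrak r(\varphi(\tau))=\lim_{\tau\to0}\tau\,\mathfrak R(\varphi(\tau))=\lim_{\tau\to0}\frac{\tau}{\kappa_0\tau}=\frac1{\kappa_0}.
\]
Hence the discs $D_\tau:=D(\x-\tau\,\mathfrak r(\varphi(\tau))\,\bfn_0,\ \tau\,\mathfrak R(\varphi(\tau)))$ all contain $K$, and their centres converge to $\x-\frac1{\kappa_0}\bfn_0$ while their radii converge to $1/\kappa_0$. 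A routine limiting argument (the intersection, or lim sup, of the $D_\tau$ is the closed disc $D(\x-\frac1{\kappa_0}\bfn_0,\frac1{\kappa_0})$, and $K$ is contained in every $D_\tau$ hence in their intersection) then gives $K\subset D(\x-\frac1{\kappa_0}\bfn_0,\frac1{\kappa_0})=:D_R$ with $R=1/\kappa_0$ and $\x\in\partial D_R$, which is exactly the assertion.

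The main obstacle I anticipate is purely bookkeeping rather than conceptual: one must verify carefully that the enclosing disc of $\tau F_k$ can be taken centred on the inward normal ray at $\x$, so that after $\tau\to0$ the limiting disc genuinely touches $\gamma$ at $\x$ (and not merely ``near'' $\x$). This amounts to locating, from the explicit vertices in \eqref{FAmdef}, the point of $F_k$ farthest from the candidate centre $(0,\mathfrak r(\varphi))$ and confirming the resulting radius is $\mathfrak R(\varphi)$ as given in \eqref{Fcircumradius}; since the paper already asserts these in- and circumradius formulas, I would just quote them and note the centre placement follows from the symmetry of $F_k$ about the $y$-axis together with $O$ being a side-midpoint of the underlying regular $4k$-gon. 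The only other point needing a line of care is that convergence of the discs $D_\tau$ to $D_R$ is in the Hausdorff sense and that $K\subset\bigcap_\tau D_\tau$, which follows because for any two parameters the smaller disc is essentially nested once $\tau$ is small — or, more simply, because $K$ lies in each $D_\tau$ and a point at distance $>1/\kappa_0$ from $\x-\frac1{\kappa_0}\bfn_0$ would, for $\tau$ small enough, lie outside $D_\tau$.
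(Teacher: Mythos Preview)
Your proposal is correct and follows essentially the same route as the paper: apply Proposition~\ref{prop:bounds}(ii) to get $\Omega(\bfn,\tau)\ge\kappa_0\tau=:\varphi(\tau)$, invoke Theorem~\ref{th:outscribed} to enclose $K$ in $U_\alpha(\tau F_k)+\x$, enclose $F_k$ in the disc $D\bigl((0,\mathfrak r(\varphi)),\mathfrak R(\varphi)\bigr)$, and pass to the limit using $\mathfrak r(\varphi)\sim\mathfrak R(\varphi)\sim 1/\varphi$. The paper's version is terser and simply asserts the disc containment and the limit without your additional bookkeeping on the centre location and the Hausdorff-type limiting step, but the argument is the same.
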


\begin{proof} Now we start with (ii) of Proposition
\ref{prop:bounds} to obtain  $\Omega(\tau)\geq \kappa_0\tau$ for
all $\tau$. Put $\varphi:=\varphi(\tau):=\kappa_0\tau$. Clearly,
when $\tau\to 0$, then also $\varphi(\tau)\to 0$ and $k:=\lceil
\pi/(2\varphi(\tau))\rceil \to \infty$. Take
$\nx=(\cos\alpha,\sin\alpha)$ and apply Theorem
\ref{th:outscribed} to obtain $U_{\alpha}(\tau F_k)+\x\supset K$
for all $\tau>0$. Observe that $D_{\varphi}:=D((0,{\mathfrak
r}(\varphi)),{\mathfrak R}(\varphi)) \supset F_k$, hence
$U_{\alpha}(\tau D_{\varphi})+\x\supset K$. In the limit, since
${\mathfrak r}(\varphi(\tau)) \sim {\mathfrak R}(\varphi(\tau))
\sim 1/(\varphi(\tau)) =1/(\kappa_0\tau)$, we find
$D(\x-(1/\kappa_0)\bfn, 1/\kappa_0) \supset K$, for any $\bfn\in
\nx$, that implies the statement.
\end{proof}

\section{The case of higher dimensional spaces}\label{s:highdim}

The Blaschke and Strantzen theorems in $\RR^d$, $d\geq 2$ can
easily be deduced from the $\RR^2$ versions. However, it is more
difficult to establish $d$-variable analogs of the above discrete
Blaschke type theorems. The main reason for this difficulty is
that normal vectors need not vary within one plane, when $\x$
varies along a plane curve on $\partial K$.

In particular, we do not see if it can happen that for some convex
body $K\subset \RR^d$ the minimal oscillation function
$\Omega(\bfn,\tau)$ is relatively large, while on the restriction
to some plane $P$ -- that is, for $K_0:=K\cap P$ -- it is almost
zero. It can not happen with exactly zero, as then a straight line
segment $L$ belongs to $\partial K_0$, and the space normals at
relative interior points of $L$ define supporting hyperplanes in
the space which are valid also for all other relative interior
points of $L$, therefore, $\nx=\ny$ for all points $\x,\y \in {\rm
relint} L$. But if with a circle arc $C$ of large radius (so of
small curvature) we assume $C\subset \partial K_0$, and we define
a twisting function $\nx$ along $C$, then the halfspaces with
outer normals $\nx$ through $\x$ mesh in a convex set $K$ with
nonempty interior: it is then not too difficult to make K into a
convex body with a few other halfspaces. So at least along the arc
$C$ the space normals change considerably, while not in $P$.
However, it is not clear if a construction can be made with
$\Omega(\bfn,\tau)$ large and $\Omega(\bnu,\tau)$ small when
considered globally on $K$ and $K_0$, resp. (What is missing in
the above idea is to guarantee that the space normals do change
considerably \emph{between any two points} of distance $\geq
\tau$, and not only between $\tau$-far points of $C$.)

In any case, we present a version, even if somewhat weaker than
one would wish, involving the modulus of continuity with respect
to chord length. Again, it is unclear if an extension with respect
to geodesic distance on $\partial K$, i.e. the natural extension
of arc length in $\RR^2$, can be established. On the other hand,
we will be able to formulate our statements in the generality of
infinite dimensional spaces. In fact, here it will be more
convenient to consider another form of the modulus of continuity,
easily defined even in Banach spaces, where distance of the outer
unit normals will be measured in the distance of vectors within
the dual space (i.e. chord length), and not the geodesic distance
(i.e. angle difference) on the surface of the unit ball of the
dual space. So we can introduce the next definitions.

\begin{definition}\label{def:spaceomega}
Let $\bfv:H\to 2^M\setminus\{\emptyset\}$, where $H\subset X$,
$M\subset Y$ are sets in the normed vector spaces $X,Y$,
respectively. Define
\begin{equation}\label{modcontspace}
\overline{\omega}(\tau):=\overline{\omega}(\bfv,\tau):=\sup \{
\|\bfu -\bfw\|_Y \,:~\,\x,\y\in H,~\|\x-\y\|_X \leq \tau,~ \bfu\in
\bfv(\x),\bfw\in\bfv(\y) \}.
\end{equation}
and
\begin{equation}\label{minoscspace}
\overline{\Omega}(\tau):=\overline{\Omega}(\bfv,\tau):=\inf \{
\|\bfu-\bfw\|_Y ~:~ \x,\y\in H,~\|\x-\y\|_X \geq \tau ,~ \bfu\in
\bfv(\x),~\bfw\in\bfv(\y)\}.
\end{equation}
\end{definition}

In our use of the notion, we will take for $H$ either $\partial K$
or $\partial K\cap P$, equipped with the norm distance from $X$,
and $M$ will be the unit ball in the dual space $X^{*}$. In fact,
due to the geometrical nature of our subject, we will also need
orthogonality, that is, Hilbert space structure, in the estimation
of Proposition \ref{prop:modulicompare} below.

The main reason to use this type of distance in measuring the
change of the outer unit normal vectors is that angles can not be
handled elegantly when an angle is multiplied by some scalar (like
$2R/r$ below), for the arising arcsin etc. functions have
restricted domain. On the other hand, increase of distance
estimates are naturally and easily formulated.

Recall that any {\it{infinite dimensional real Hausdorff
topological linear space}} we mean by a {\it{convex body}} a
bounded closed convex set with non-empty interior. Let $K$ be a
convex body in an infinite dimensional real Hilbert space $X$. We
have $K=\overline{{{\text{int}}K}}$ (this holding in any Hausdorff
topological vector space, \cite[I, p. 413, Theorem 1, (c)]{DS}. We
say that $K$ is a {\it{smooth convex body}} if for any ${\bold x}
\in {\partial}K$ there exists exactly one unit vector ${\bold
n}({\bold x})$, called {\it{outer unit normal of $K$ at ${\bold
x}$}}, such that $K \subset \{ {\bold y} \in X \mid \langle {\bold
y}, {\bold n}({\bold x}) \rangle \le \langle {\bold x}, {\bold
n}({\bold x}) \rangle \} $. Observe that at least one such vector
exists, since ${\bold x} \notin {\text{int}}K \ne \emptyset $ (and
in a Hausdorff topological vector space a non-empty open convex
set and a convex set disjoint to it can be separated by a non-zero
continuous linear functional, cf. \cite[I, p. 417, Theorem
8]{DS}.)

For $\c\in\RR^d$ and $r>0$ we let $B(\c,r)$ the closed ball of
centre $\c$ and radius $r$.

\begin{proposition}\label{prop:modulicompare} Let $K\subset H$
be a convex body in the Hilbert space $H$. Assume that $\c\in K$
with $B(\c,r)\subset K \subset B(\c,R)$, where $0<r<R$. Take any
two-dimensional plane $P$ through $\c$, denote $K_0:=K\cap P
\subset P$, and denote $\bnu(\x)$ the (in general multivalued)
outer unit normal vector function of $K_0$ at $\x\in\partial K_0$
within $P$.

Let $\overline{\omega}(\n,\tau)$ and
$\overline{\omega}(\bnu,\tau)$ stand for the modulus of continuity
\eqref{modcontspace} of the surface normal vectors $\n$ of $K$
along $\partial K$ in $H$ and $\bnu$ of $K_0$ along $\partial K_0$
in $P\subset H$, respectively. We then have
\begin{equation}\label{modcontcompare}
\overline{\omega}(\bnu,\tau)\leq \frac{2R}{r}
\overline{\omega}(\bfn,\tau).
\end{equation}
\end{proposition}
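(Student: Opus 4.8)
The plan is to relate a normal vector $\bnu(\x)$ of the planar slice $K_0=K\cap P$ at a point $\x\in\partial K_0$ to the normal vectors of $K$ at nearby boundary points of $K$ itself, and then exploit the ball sandwich $B(\c,r)\subset K\subset B(\c,R)$ to control the geometry quantitatively. First I would fix $\x\in\partial K_0$ and a unit vector $\bnu\in\bnu(\x)\subset P$. The supporting line of $K_0$ at $\x$ within $P$ need not extend to a supporting hyperplane of $K$ in $H$ with the same normal; however, the set of boundary points of $K$ lying directly ``above'' $\x$ in the direction $\bnu$ -- more precisely, the points $\x+t\bnu\in\partial K$ or, better, the points where outer normals of $K$ have nonnegative inner product with $\bnu$ and project onto $\bnu$-ish directions -- do have outer normals $\n$ of $K$ whose $P$-component points roughly along $\bnu$. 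The cleanest route: consider the exposed point $\bfz\in\partial K$ maximizing $\langle\cdot,\bnu\rangle$ over $K$; then $\bnu$ is essentially the $P$-projection (after renormalizing) of the outer normal $\n(\bfz)$ of $K$ at $\bfz$, and $\bfz$ lies within $H$ but not too far from $\x$ because both are squeezed between the two concentric balls.

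The key quantitative step is a chord-length estimate: if $\x,\y\in\partial K_0$ with $\|\x-\y\|_X\le\tau$, and $\bfz_\x,\bfz_\y\in\partial K$ are the corresponding exposed points in directions $\bnu(\x),\bnu(\y)$, then I would bound $\|\bfz_\x-\bfz_\y\|_X$ in terms of $\tau$, $r$, $R$, and $\overline{\omega}(\n,\tau)$ -- or, more directly, bound $\|\bnu(\x)-\bnu(\y)\|$ by $\|\n(\bfz_\x)-\n(\bfz_\y)\|$ up to the factor $2R/r$ coming from the following mechanism. The unit normal $\n(\bfz)$ of $K$ decomposes as $\n(\bfz)=\bfw+\bfw^\perp$ with $\bfw\in P$ and $\bfw^\perp\perp P$; the planar normal is $\bnu(\x)=\bfw/\|\bfw\|$. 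The ball condition forces $\|\bfw\|\ge r/R$ (this is where the Hilbert/orthogonality structure and the sandwich are both used: a normal of $K$ at a point of $\partial K_0$ cannot be nearly perpendicular to $P$, since $B(\c,r)\cap P\subset K_0$ and $K_0\subset B(\c,R)\cap P$ pin the tangent plane of $K_0$). Then for two such normals, $\|\bnu(\x)-\bnu(\y)\|=\|\bfw_\x/\|\bfw_\x\|-\bfw_\y/\|\bfw_\y\|\|\le \frac{2}{\min(\|\bfw_\x\|,\|\bfw_\y\|)}\|\bfw_\x-\bfw_\y\| \le \frac{2R}{r}\|\n(\bfz_\x)-\n(\bfz_\y)\|\le\frac{2R}{r}\overline{\omega}(\n,\tau)$, using the elementary inequality $\|a/\|a\|-b/\|b\|\|\le 2\|a-b\|/\min(\|a\|,\|b\|)$ for nonzero $a,b$ and the fact that projection onto $P$ is $1$-Lipschitz so $\|\bfw_\x-\bfw_\y\|\le\|\n(\bfz_\x)-\n(\bfz_\y)\|$.

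The remaining loose end, and the main obstacle, is the passage from ``$\x,\y$ close in $\partial K_0$'' to ``$\bfz_\x,\bfz_\y$ close in $\partial K$'', since the supremum in $\overline{\omega}(\n,\tau)$ is taken over $\tau$-close pairs in $\partial K$. I would handle this by noting that $\bfz_\x$ is the point of $K$ farthest in direction $\bnu(\x)$, hence lies over the supporting line of $K_0$ at $\x$, so its $P$-projection is exactly $\x$ (or a point of $\partial K_0$ with the same support line); its perpendicular displacement off $P$ is at most $R$ in norm, but crucially, when $\x$ moves to $\y$ the support line of $K_0$ tilts by an angle controlled by $\overline{\omega}(\bnu,\tau)$ itself, risking circularity. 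The fix is to run the argument with $\bfz_\x$ chosen as the \emph{$P$-nearest} exposed point rather than via angles, or to first establish the estimate for the lift-up of a single extremal pair realizing $\overline{\omega}(\bnu,\tau)$ up to $\varepsilon$: pick $\x,\y$ with $\|\x-\y\|\le\tau$ and $\|\bnu(\x)-\bnu(\y)\|\ge\overline{\omega}(\bnu,\tau)-\varepsilon$, take $\bfz_\x,\bfz_\y$ the corresponding extremal points of $K$, verify $\|\bfz_\x-\bfz_\y\|\le\tau$ directly from the fact that the $P$-components are $\x,\y$ and the perpendicular components agree up to an error forced small by the ball sandwich (the convex body between two concentric balls has its extremal points in direction $\bnu$ and $\bnu'$ at perpendicular heights differing by $O(R\|\bnu-\bnu'\|)$, which combined with $\|\bnu-\bnu'\|\le \pi$ trivially and then bootstrapped is absorbed into the constant $2R/r$ by being slightly more careful -- or simply note $\|\bnu-\bnu'\|\le 2$ always so the height difference is $\le 2R$, keeping us within a fixed multiple), and then apply the displayed chain of inequalities and let $\varepsilon\to0$. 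I expect this bookkeeping around the lift -- making sure the pair in $\partial K$ is genuinely $\tau$-close -- to be the only real subtlety; everything else is the elementary projection/renormalization estimate above.
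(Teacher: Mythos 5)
Your central computation --- decomposing a space normal $\n = \bfw + \bfw^\perp$ with $\bfw \in P$, lower-bounding $\|\bfw\| \ge r/R$ from the ball sandwich, and then using the renormalization inequality $\|a/\|a\| - b/\|b\|\| \le 2\|a-b\|/\min(\|a\|,\|b\|)$ together with the $1$-Lipschitz property of $\Pi_P$ --- is exactly the quantitative core of the paper's Lemma~\ref{lemma:spacechange}, and it is correct. But you introduce a fatal detour by passing from the given boundary points $\x,\y \in \partial K_0$ to auxiliary exposed points $\bfz_\x, \bfz_\y \in \partial K$ maximizing $\langle \cdot, \bnu(\x)\rangle$ and $\langle\cdot,\bnu(\y)\rangle$. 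To apply $\overline\omega(\n,\tau)$ to the pair $\bfz_\x,\bfz_\y$ you must know $\|\bfz_\x - \bfz_\y\| \le \tau$, and this simply does not hold: the ``inverse Gauss map'' sending a direction to its exposed point is continuous on a convex body but not Lipschitz, and the ball sandwich $B(\c,r) \subset K \subset B(\c,R)$ gives no modulus of continuity for it whatsoever (think of a body with a flat face: nearby directions near the face normal have exposed points that can slide anywhere along the face). Your proposed patch --- that the perpendicular displacement is $O(R\|\bnu(\x)-\bnu(\y)\|)$ --- is circular, since $\|\bnu(\x)-\bnu(\y)\|$ is the quantity being estimated, and your fallback ``height difference $\le 2R$'' visibly loses the $\tau$ bound entirely.

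The step you were missing is that you never need to leave the point $\x$. Since $K_0 = K \cap P$ and $P$ passes through the interior point $\c$, one has $\partial K_0 \subset \partial K$: a point $\x$ on $\partial K_0$ is already a boundary point of $K$ in $H$. The paper's Lemma~\ref{lemma:projections} then shows that, although the planar supporting line at $\x$ need not extend to a supporting hyperplane of $K$ \emph{with the same normal} (as you correctly note), nevertheless \emph{at the same point} $\x$ there exists a supporting hyperplane of $K$ in $H$, i.e.\ some $\bfu \in \nx$, whose $P$-projection $\Pi_P\bfu$ is nonzero and parallel to the given planar normal $\w \in \bnu(\x)$. One then applies your projection/renormalization chain at $\x$ and $\y$ themselves, with $\bfu \in \nx$, $\bfv \in \ny$ chosen this way; since $\x,\y \in \partial K$ and $\|\x-\y\| \le \tau$, the bound $\|\bfu - \bfv\| \le \overline\omega(\bfn,\tau)$ is immediate from the definition, and the estimate $\|\w-\z\| \le \frac{2R}{r}\|\bfu-\bfv\|$ finishes the proof. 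So the issue is not a minor bookkeeping subtlety as you surmise; it is a genuine gap, and the correct route replaces the exposed-point construction by the observation that $\x$ itself serves.
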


The following lemmas are well-known elementary facts of space
geometry.

\begin{lemma}\label{lemma:nonzeroproj} Let $K \subset H$ be a
convex body in the Hilbert space $H$, let $\x \in \partial K$ and
$P$ be any affine subspace through the point $\x$ and containing
some interior point of $K$, too (so that, in particular, $P$ has
dimension at least 1). Assume that $P$ is closed (which is
satisfied in any case if $\dim P<\infty$) and denote $\Pi:=\Pi_P$
the orthogonal projection of vectors to the affine subspace $P$.
Then for any $\bfu\in \nx$ we have $\Pi \bfu \ne {\bf 0}$.
\end{lemma}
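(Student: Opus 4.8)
The plan is to argue by contradiction: suppose some $\bfu\in\nx$ satisfies $\Pi\bfu=\ob$, i.e. $\bfu$ is orthogonal to the linear subspace $P-\x$ parallel to $P$. Since $\bfu$ is an outer unit normal at $\x$, the supporting hyperplane $\{\bfy:\langle\bfy,\bfu\rangle=\langle\x,\bfu\rangle\}$ contains $K$ in the halfspace $\langle\bfy,\bfu\rangle\le\langle\x,\bfu\rangle$. First I would observe that because $\bfu\perp(P-\x)$, every point $\bfy\in P$ satisfies $\langle\bfy,\bfu\rangle=\langle\x,\bfu\rangle$; that is, the whole affine subspace $P$ lies \emph{on} the supporting hyperplane, hence on the boundary of the supporting halfspace.

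Next I would invoke the hypothesis that $P$ contains an interior point $\bfz\in\intt K$. Then $\bfz$ lies on the supporting hyperplane through $\x$, so $\langle\bfz,\bfu\rangle=\langle\x,\bfu\rangle$. But for an interior point of a convex body a supporting functional must be strictly less than its boundary value: since $\bfz\in\intt K$, there is a ball $B(\bfz,\varepsilon)\subset K$, and the point $\bfz+\varepsilon\bfu/2$ then lies in $K$ yet satisfies $\langle\bfz+\tfrac{\varepsilon}{2}\bfu,\bfu\rangle=\langle\x,\bfu\rangle+\tfrac{\varepsilon}{2}>\langle\x,\bfu\rangle$, contradicting the supporting inequality $\langle\bfy,\bfu\rangle\le\langle\x,\bfu\rangle$ valid for all $\bfy\in K$. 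This contradiction shows $\Pi\bfu\ne\ob$.

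The only point requiring a word of care is that $\Pi$ is defined as the orthogonal projection onto the \emph{affine} subspace $P$, so "$\Pi\bfu=\ob$" should be read as: the image of $\bfu$ under the associated linear projection onto the direction space $P-\x$ vanishes, equivalently $\bfu\perp(P-\x)$; closedness of $P$ (automatic when $\dim P<\infty$) is what guarantees this orthogonal projection is well-defined in the Hilbert space $H$. With that reading the argument above is complete, and I do not expect any genuine obstacle — the statement is exactly the elementary fact that a normal at a boundary point cannot be orthogonal to a subspace meeting the interior, because otherwise the supporting hyperplane would contain an interior point.
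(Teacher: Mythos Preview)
Your proof is correct and follows essentially the same approach as the paper: both take an interior point $\bfz\in\intt K\cap P$, use a small ball $B(\bfz,\varepsilon)\subset K$, and derive a contradiction with the supporting inequality $\langle\bfy-\x,\bfu\rangle\le 0$. The only cosmetic difference is that you pick the single witness point $\bfz+\tfrac{\varepsilon}{2}\bfu$ to violate the inequality, whereas the paper observes that $\langle\bfu,\bbb\rangle\le 0$ for all $\bbb$ in the ball forces $\bfu=\ob$; both are the same elementary observation.
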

\begin{proof}
Let $\bfu \in \nx$ be an outer unit normal vector to $K$ at $\x$.
In order to prove $\Pi \bfu \ne {\bf 0}$, it suffices to take any
$\y\in \intt K\cap P$ (which exists by assumption), and show that
the vector $\y-\x$ is not orthogonal to $\bfu$. Indeed, for a
small ball $B=B({\bf 0},\delta)$ such that $\y+B\subset \intt K $,
$\< \bfu, \y+\bbb - \x\> \leq 0$ for all $\bbb \in B$ (because
$\bfu\in \nx$) and $\<\bfu , \y-\x\> = 0$ would imply $\< \bfu,
\bbb \> \leq 0$ for the whole ball $B$, hence $\< \bfu, \bbb \>
=0$ ($\forall \bbb \in B$) and $\bfu={\bf 0}$, a contradiction.
\end{proof}

\begin{lemma}\label{lemma:projections} Let $K \subset H$ be a
convex body in a Hilbert space $H$, $\x \in \partial K$ and $P$ be
any closed affine subspace through the point $\x$ and containing
some interior point of $K$, too. Denote $\Pi:=\Pi_P$ the
orthogonal projection of vectors to $P$ and write $\nux$ for the
nonempty set of all outer unit normals to $K_0$ in $P$ at $\x$.

Then for arbitrary $\w \in \nux$ there exists some $ \bfu \in \nx$
such that ${\bf 0}\ne \Pi \bfu \| \w$. Conversely, if $\bfu\in
\nx$ then $\Pi \bfu \ne {\bf 0}$ and the unit vector $\w:=\Pi
\bfu/|\Pi \bfu|$ belongs to $\nux$.
\end{lemma}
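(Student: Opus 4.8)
\textbf{Proof plan for Lemma \ref{lemma:projections}.}

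The plan is to reduce everything to the previous two lemmas and a short convexity argument. First I would observe that the ``converse'' half is almost immediate: if $\bfu\in\nx$, then $\Pi\bfu\ne{\bf 0}$ by Lemma \ref{lemma:nonzeroproj}, so $\w:=\Pi\bfu/|\Pi\bfu|$ is a well-defined unit vector lying in the linear direction space of $P$. To see that $\w\in\nux$, I would check the supporting-hyperplane inequality within $P$: for any $\y\in K_0=K\cap P$ we have $\y\in K$, hence $\langle\bfu,\y-\x\rangle\le 0$; but $\y-\x$ lies in the direction space of $P$, so $\langle\bfu,\y-\x\rangle=\langle\Pi\bfu,\y-\x\rangle=|\Pi\bfu|\,\langle\w,\y-\x\rangle$, whence $\langle\w,\y-\x\rangle\le 0$. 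Since this holds for all $\y\in K_0$, $\w$ is an outer unit normal to $K_0$ at $\x$ within $P$, i.e. $\w\in\nux$.

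For the direct half, fix $\w\in\nux$. The supporting line to $K_0$ at $\x$ with normal $\w$ separates $\x$ (a boundary point of $K$) from the relatively open set $\intt K\cap P$ inside the affine hull of $P$; but I actually want a supporting \emph{hyperplane} of the whole body $K$. The idea is to take the $(d-1)$-dimensional (or, in the infinite-dimensional case, closed) affine hyperplane $E$ that contains the supporting line $\{\x+t\w^{\perp}:t\in\RR\}\cap P$ together with the orthogonal complement of $P$ through $\x$; equivalently, $E:=\{\z:\langle\w,\z-\x\rangle=0\}$ — note $\w$ is already a vector of the ambient space, so this is a genuine hyperplane of $H$. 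Then $K$ lies on one side of $E$: indeed, by the same computation as above, for $\y\in K\cap P$ we get $\langle\w,\y-\x\rangle\le 0$, but for a general $\y\in K$ one must argue that the whole body, not merely its planar slice, stays on the correct side. This is the point where I expect to need a genuine argument rather than a one-line reduction.

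To handle that obstacle, I would use that any outer unit normal $\bfu$ of $K$ at $\x$ whose projection is a positive multiple of $\w$ can be produced by a separation/extension argument: the hyperplane $E$ need not support $K$, but among all supporting hyperplanes of $K$ at $\x$ (there is at least one, by Lemma \ref{lemma:nonzeroproj}'s preamble on separation of the open convex set $\intt K$ from $\{\x\}$), one can choose one whose intersection with $P$ is exactly the supporting line of $K_0$ with normal $\w$. Concretely: within $P$ the normal cone of $K_0$ at $\x$ is the closed convex cone generated by $\nux$; dually, the normal cone of $K$ at $\x$ projects \emph{onto} the normal cone of $K_0$ at $\x$ (since every supporting functional of $K_0$ at $\x$ extends — by Hahn--Banach, keeping the bound — to a supporting functional of $K$ at $\x$, its restriction to $P$ being the given one and its projection being a nonnegative multiple of the corresponding planar normal). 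Picking $\bfu\in\nx$ whose restriction to $P$ is a positive multiple of $\w$, Lemma \ref{lemma:nonzeroproj} gives $\Pi\bfu\ne{\bf 0}$, and then $\Pi\bfu$ points in the same ray as $\w$, i.e. ${\bf 0}\ne\Pi\bfu\,\|\,\w$, which is the assertion. The only care needed is the sign: the extension could a priori give $\Pi\bfu$ antiparallel to $\w$, but the supporting inequality $\langle\bfu,\y-\x\rangle\le 0$ for $\y\in K_0$ forces $\langle\Pi\bfu,\y-\x\rangle\le 0$ for all such $\y$, which together with $\langle\w,\y-\x\rangle\le 0$ and $\Pi\bfu\parallel\w$ pins down the direction as the same ray, completing the proof.
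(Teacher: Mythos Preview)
Your converse direction is correct and matches the paper's argument essentially verbatim.

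For the direct direction there is a genuine gap. You assert that ``every supporting functional of $K_0$ at $\x$ extends --- by Hahn--Banach, keeping the bound --- to a supporting functional of $K$ at $\x$, its restriction to $P$ being the given one.'' But this is exactly the nontrivial content of the lemma, and it does \emph{not} follow from the classical Hahn--Banach extension theorem. Hahn--Banach lets you extend a functional while staying below a fixed sublinear majorant; here there is no evident sublinear functional on $H$ encoding the condition ``nonpositive on $K-\x$'' (note $\x\in\partial K$, so a Minkowski gauge centered at $\x$ is not available). What you are really invoking is the normal-cone intersection formula $N_{K\cap P}(\x)=N_K(\x)+(P-\x)^{\perp}$, valid under the constraint qualification $P\cap\intt K\ne\emptyset$. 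That identity is true, but it is a separate theorem of convex analysis, itself proved via a separation argument; labelling it ``Hahn--Banach'' hides precisely the step that needs work. (Your closing sign discussion is then also redundant: if the extension genuinely restricts to $\langle\w,\cdot\rangle$ on $P-\x$, then after normalization $\Pi\bfu$ is automatically a positive multiple of $\w$.)

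The paper avoids the normal-cone formula and argues directly. Given $\w\in\nux$, it enlarges $K$ to the convex set $M$ generated by $K$ together with the unit ball $V_0$ of $V:=\w^{\perp}\cap(P-\x)$ translated to $\x$, checks that $\w$ is still an outer normal to $M\cap P$ at $\x$, and then applies the separation theorem in $H$ to the disjoint convex sets $\intt M-\x$ and $[{\bf 0},\w]$. The separating unit vector $\bfu$ lies in $\nx$ (since $K\subset M$), satisfies $\langle\bfu,\bfv\rangle\le0$ for all $\bfv\in V_0$, hence $\bfu\perp V$ by symmetry of $V_0$, so $\Pi\bfu\parallel\w$; finally $\langle\bfu,\w\rangle\ge0$ from the separation, together with $\Pi\bfu\ne{\bf 0}$ from Lemma~\ref{lemma:nonzeroproj}, pins down the sign. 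This uses only the geometric Hahn--Banach (separation of an open convex set from a disjoint convex set) and is entirely self-contained. If you want to keep your higher-level route, cite the normal-cone sum formula explicitly and note that the constraint qualification holds; otherwise, supply an explicit separation argument as the paper does.
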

\begin{proof}
Let now $\w\in \bnu(\x)$. Consider the orthogonal complement $V$
of $\w$ in $P-\x$, i.e. $V:=\{ \bfv\in P-\x~:~ \<\w,\bfv\> =0\}$,
take $V_0:=V\cap B({\bf 0},1)$, and consider the new convex set
$M$ generated as the convex combination of $V_0+\x (\subset P)$
and $K$, i.e. $M:={\{ t \z +(1-t) \y~:~ \z \in K,~\y \in V_0+\x,~0
\leq t \leq 1\}}$. Note that ${\rm cl M}$ is a convex body.
Clearly $M_0:=M\cap P = \{ t \z +(1-t) \y~:~ \z\in K_0,~\y\in
V_0+\x,~0 \leq t \leq 1\}$. Observe that $\w$ is still an outer
unit normal vector in $P$ at $\x$ even to $M_0$.

Consider the disjoint nonempty convex sets $\intt M-\x$ and $[{\bf
0},\w]$. Note that $\intt M\supset \intt K\ne \emptyset$ is open.
It follows that there exists a normalized linear functional,
whence a unit vector $\bfu$, such that $\langle \bfu, \y-\x
\rangle < 0 \leq \langle \bfu, \w \rangle $ for all $\y \in \intt
M$. Obviously by convexity of $M$ and in view of $\intt M\ne
\emptyset$, we have $M \subset {\rm cl}\intt M$. Therefore we find
$\langle \bfu, \y-\x \rangle \leq 0 \leq \langle \bfu, \w \rangle
$ for all $\y\in M$, hence also for vectors in $K$. So clearly any
such $\bfu$ belongs to $\nx$. Note that by Lemma
\ref{lemma:nonzeroproj} $\Pi \bfu \ne {\bf 0}$.

Moreover, $\<\bfu, (\x+\bfv)-\x\>\leq 0$ for all $\bfv\in V_0$, so
that $\bfu \bot V$. Therefore, $\Pi \bfu \bot V$ and thus $\Pi
\bfu \| \w$, as needed.

Finally, consider the converse: let $\bfu\in \nx$ (so that $\Pi
\bfu \ne {\bf 0}$) and write $\bfu = \Pi \bfu + \bfu'$, where
$\bfu' \bot P$ (i.e., $\bfu \bot (P-\x)$). Clearly, $\< \bfu' ,
\y-\x\> =0$ for all $\y\in K_0\subset P$. On the other hand
$\<\bfu,\y-\x\>\leq 0$ for all $\y\in K$ (because $\bfu \in \nx$),
hence for all $\y\in K_0$, so combining these two we get $\<\Pi
\bfu,\y-\x\> \leq 0$ for all $\y\in K_0$. So we can take $\w:=\Pi
\bfu / |\Pi \bfu |$, which satisfies $\<\w,\y-\x\> \leq 0$ for all
$\y\in K_0$, whence $\w\in \bnu(\x)$ and the assertion follows.
\end{proof}

\begin{lemma}\label{lemma:spacechange} Let $K
 \subset H$ be a convex body in the Hilbert space $H$, and
$\bfc \in \intt K$ satisfying $B(\c,r)\subset K \subset B(\c,R)$
with some $0<r<R<\infty$. Let $\x,\y\in \partial K$. Denote
$K_0:=K\cap P$, where $P$ is a closed affine subspace containing
the points $\x, \y$, and $\c$. (Note that $P$ is at least 1
dimensional.) Let us write $\Pi_P:=\Pi$ for the orthogonal
projection of vectors to $P$.

Denote the nonempty set of all outer unit normals to $K_0$ in $P$
at $\x$ and $\y$ as $\nux$ and $\nuy$, respectively, and let
$\w\in \nux$, $\z\in\nuy$ be arbitrary. Let $\bfu, \bfv$ be outer
unit normal vectors from $\nx, \ny \subset S_H$, respectively,
such that ${\bf 0} \ne \Pi\bfu \| \w,~{\bf 0} \ne \Pi\bfv\|\z$
(which exist according to Lemma \ref{lemma:projections}). Then we
have
\begin{equation}\label{munueq}
|\w-\z| \leq \frac{2R}{r} |\bfv-\bfu|.
\end{equation}
Consider now the angles $\varphi(\bfu,\bfv):=\arccos \langle
\bfu,\bfv \rangle\in [0,\pi]$ and $\psi(\w,\z):= \arccos \langle
\w,\z \rangle \in [0,\pi]$ between these normals. Moreover, assume
$\varphi(\bfu,\bfv)< 2 \arcsin \frac{r}{2R}$. We then have
\begin{equation}\label{amunueq}
\psi(\w,\z) \leq 2 \arcsin \left(\frac{2R}{r}
\sin\frac{\varphi(\bfu,\bfv)}{2} \right).
\end{equation}
\end{lemma}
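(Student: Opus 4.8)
The plan is to first establish the distance estimate \eqref{munueq}, and then derive the angular version \eqref{amunueq} from it by purely trigonometric manipulation, exploiting that both $\w,\z$ and $\bfu,\bfv$ are \emph{unit} vectors so that chord length and half-angle sine are linked by the identity $|\a-\b| = 2\sin(\varphi(\a,\b)/2)$ for unit vectors $\a,\b$.

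For \eqref{munueq}, the key geometric input is the sandwiching $B(\c,r)\subset K\subset B(\c,R)$, which I would use to control the components of $\bfu$ and $\bfv$ orthogonal to $P$. Writing $\bfu = \Pi\bfu + \bfu^\perp$ with $\bfu^\perp\perp P$, I first note (Lemma \ref{lemma:nonzeroproj}) that $\Pi\bfu\ne\ob$, and that $\w = \Pi\bfu/|\Pi\bfu|$, $\z=\Pi\bfv/|\Pi\bfv|$ by Lemma \ref{lemma:projections}. The crucial quantitative claim is a lower bound $|\Pi\bfu|\ge r/R$ (and likewise for $\bfv$): since $\bfu\in\nx$, the supporting halfspace at $\x$ excludes $\intt B(\c,r)$, so $\langle \bfu, \c-\x\rangle \le -r$; on the other hand $|\c-\x|\le R$, and since $\c\in P$ while $\x\in P$, the vector $\c-\x$ lies in $P-\x$, so $\langle \bfu,\c-\x\rangle = \langle \Pi\bfu,\c-\x\rangle$, giving $r\le |\langle\Pi\bfu,\c-\x\rangle|\le |\Pi\bfu|\cdot R$, i.e. $|\Pi\bfu|\ge r/R$. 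Then the normalization is a mild perturbation: from $\w=\Pi\bfu/|\Pi\bfu|$, $\z=\Pi\bfv/|\Pi\bfv|$ one estimates
\[
|\w-\z| \le \frac{|\Pi\bfu-\Pi\bfv|}{|\Pi\bfu|} + \left|\frac{1}{|\Pi\bfu|}-\frac{1}{|\Pi\bfv|}\right| |\Pi\bfv|
\le \frac{|\Pi\bfu-\Pi\bfv|}{\min(|\Pi\bfu|,|\Pi\bfv|)} + \frac{\big||\Pi\bfv|-|\Pi\bfu|\big|}{\min(|\Pi\bfu|,|\Pi\bfv|)},
\]
and since $\big||\Pi\bfv|-|\Pi\bfu|\big|\le |\Pi\bfu-\Pi\bfv|\le |\bfu-\bfv|$ and $|\Pi\bfu-\Pi\bfv|\le|\bfu-\bfv|$, this yields $|\w-\z|\le \dfrac{2}{r/R}|\bfu-\bfv| = \dfrac{2R}{r}|\bfu-\bfv|$, which is \eqref{munueq}. (One should double-check the constant; the factor $2$ is exactly what is claimed, so the crude triangle-inequality split above is the intended route.)

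For \eqref{amunueq}, I would simply translate chords back to angles. For unit vectors, $|\bfv-\bfu| = 2\sin(\varphi(\bfu,\bfv)/2)$ and $|\w-\z| = 2\sin(\psi(\w,\z)/2)$. Substituting into \eqref{munueq} gives $\sin(\psi(\w,\z)/2) \le \frac{2R}{r}\sin(\varphi(\bfu,\bfv)/2)$. The hypothesis $\varphi(\bfu,\bfv) < 2\arcsin\frac{r}{2R}$ is exactly what guarantees $\frac{2R}{r}\sin(\varphi(\bfu,\bfv)/2) < 1$, so the right-hand side lies in the domain of $\arcsin$; since $\psi/2\in[0,\pi/2]$ and $\arcsin$ is increasing on $[0,1]$, applying $\arcsin$ to both sides and multiplying by $2$ gives $\psi(\w,\z)\le 2\arcsin\!\big(\frac{2R}{r}\sin\frac{\varphi(\bfu,\bfv)}{2}\big)$, as desired.

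The main obstacle is the lower bound $|\Pi\bfu|\ge r/R$ and, relatedly, getting the constant in \eqref{munueq} to come out as $2R/r$ rather than something larger; the delicate point is that the inner ball $B(\c,r)$ must be used to bound $|\Pi\bfu|$ from below (this is where the Hilbert-space orthogonality and the fact that $\c$ lies in $P$ are essential, so that $\langle\bfu,\c-\x\rangle$ only sees $\Pi\bfu$), while the outer ball $B(\c,R)$ bounds $|\c-\x|$ from above. Once that bound is in hand, everything else is routine triangle-inequality and monotonicity-of-$\arcsin$ bookkeeping.
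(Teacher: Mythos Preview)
Your proposal is correct and follows essentially the same approach as the paper: establish $|\Pi\bfu|\ge r/R$ from the sandwiching $B(\c,r)\subset K\subset B(\c,R)$ using that $\c-\x$ lies in the direction subspace of $P$, then bound $|\w-\z|$ by a triangle-inequality split of the normalized projections, and finally convert to angles via $|\bfa-\bbb|=2\sin(\varphi(\bfa,\bbb)/2)$. Your derivation of $|\Pi\bfu|\ge r/R$ (via $\langle\bfu,\c-\x\rangle\le -r$ and $\langle\bfu,\c-\x\rangle=\langle\Pi\bfu,\c-\x\rangle$) is in fact a slightly cleaner algebraic rendering of the paper's geometric argument with the foot of perpendicular $\c'$ and the right triangle $\c'\c\x$, but the content is identical.
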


\begin{proof} Denote $Z:=Z(\x):=\bfu^{\perp}+\x$. $Z$ being a
supporting affine hyperplane, $\intt K \cap Z =\emptyset$, thus
the projection $\c'$ of $\c$ to $Z$ does not belong to $\intt K$.
That is, $|\c'-\c|\geq r$ in view of $B(\c,r)\subset K$. On the
other hand, $\c,\x \in K$ and $|\x-\c|\leq R$, hence reading from
the right angle triangle of $\c'\c\x$, the angle $\Phi$ between
the vectors $\x-\c$ and $\c'-\c\parallel \bfu$ is $\Phi = \arccos
(\cos\Phi) \leq\arccos (r/R)$. (In case $\c'=\x$, we also have
$\Phi=0$.) From this we estimate $|\Pi \bfu|$. As $\x,\c\in P$, we
clearly have $|\Pi\bfu| =\cos \angle (P,\bfu) \geq \langle \bfu,
\frac{\x-\c}{|\x-\c|}\rangle =\cos \Phi \geq r/R$.

Observe that by definition $\w=\Pi\bfu/|\Pi\bfu|$ and
$\z=\Pi\bfv/|\Pi\bfv|$. Therefore, we obtain
\begin{align}
|\w-\z|&=\left|\frac{\Pi\bfu}{|\Pi\bfu|} -\frac{\Pi\bfv}
{|\Pi\bfv|}\right|=\left|\frac{\Pi\bfu-\Pi\bfv}{|\Pi\bfu|}
+\frac{|\Pi\bfv|-|\Pi\bfu|}{|\Pi\bfu|}\frac{\Pi\bfv}{|\Pi\bfv|}\right|
\notag \\ & \leq \frac{2|\Pi(\bfv-\bfu)|}{|\Pi\bfu|} \leq
\frac{2|\bfv-\bfu|} {|\Pi\bfu|} \leq \frac{2R}{r} |\bfv-\bfu|,
\end{align}
proving \eqref{munueq}. Clearly $|\bfu-\bfv|=2
\sin(\varphi(\bfu,\bfv)/2)$, so in case $\varphi(\bfu,\bfv)< 2
\arcsin \frac{r}{2R}$ the right hand side does not exceed 1 and
this yields
\begin{equation}\label{Psivarphi}
\psi(\w,\z)=2 \arcsin \left(\frac{|\w-\z|}{2}\right) \leq 2
\arcsin \left(\frac{2R\sin(\varphi(\bfu,\bfv)/2)}{r}\right),
\end{equation}
and \eqref{amunueq} is proved, too.
\end{proof}

As said above, under some restrictions we can derive certain
estimates between the modulus of continuity \eqref{modcontuv}
(with respect to chord length) of outer unit normal vectors in the
whole space and in a plane section.

\begin{proposition}\label{prop:anglecompare} Let $K\subset H$ be
a convex body, where $H$ is a Hilbert space. Assume that $\c\in K$
with $B(\c,r)\subset K \subset B(\c,R)$ with $0<r<R\leq \infty$.
Take any two-dimensional plane $P$ through $\c$, denote
$K_0:=K\cap P \subset P$, and denote $\bnu(\x)$ the (in general
multivalued) outer unit normal vector function of $K_0$ at
$\x\in\partial K_0$ within $P$. Let $\omega(\n,\tau)$ and
$\omega(\bnu,\tau)$ stand for the modulus of continuity
\eqref{modcontuv} (with respect to chord length) of the surface
normal vectors $\n$ of $K$ along $\partial K\subset H$ and $\bnu$
of $K_0$ along $\partial K_0$ in $P$, respectively. We then have
\begin{equation}\label{modcontcompare}
\omega(\bnu,\tau)\leq 2 \arcsin \left( \frac{2R}{r} \sin
\frac{\omega(\n,\tau)}{2}\right) \quad \textrm{whenever}~~ \tau<
\tau_0,
\end{equation}
where $\tau_0$ is chosen to satisfy $0<\tau_0\leq 2r$ and
$\omega(\n,\tau_0) \leq 2\arcsin \frac{r}{2R}$.
\end{proposition}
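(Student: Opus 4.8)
The plan is to deduce the estimate directly from Lemma~\ref{lemma:spacechange}, applied to the \emph{given} two-dimensional plane $P$, by passing to a supremum over pairs of boundary points. A few reductions come first. Since a convex body is bounded we may assume $R<\infty$; if $R=\infty$, the hypothesis forces $\omega(\bfn,\tau_0)=0$, hence $\omega(\bfn,\tau)=0$ for every $\tau<\tau_0$ by monotonicity of the modulus, so $\bfn$ is single-valued and constant along every admissible pair, and then by Lemma~\ref{lemma:projections} so is $\bnu$; thus the asserted inequality holds trivially, both sides being $0$. Next, as $\c\in B(\c,r)\subset K$ with $\c\in P$, the section $K_0=K\cap P$ contains a disc of radius $r$ in $P$ and lies in $B(\c,R)\cap P$, so it is a planar convex body and $\bnu$ is everywhere nonempty-valued. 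Finally, every $\x\in\partial K_0$ is also a point of $\partial K$: were $\x\in\intt K$, a whole $P$-neighbourhood of $\x$ would lie in $K\cap P=K_0$, contradicting $\x\in\partial K_0$. Hence the space modulus $\omega(\bfn,\cdot)$ may legitimately be invoked for such pairs.

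Now fix $\tau<\tau_0$ and consider an arbitrary admissible configuration for the chord-length modulus $\omega(\bnu,\tau)$, i.e. points $\x,\y\in\partial K_0$ with $|\x-\y|\le\tau$ and unit vectors $\w\in\bnu(\x)$, $\z\in\bnu(\y)$. I would apply the first assertion of Lemma~\ref{lemma:projections} to the closed two-dimensional affine subspace $P$ (which contains the interior point $\c$) to get outer unit normals $\bfu\in\nx$, $\bfv\in\ny$ to $K$ with ${\bf 0}\ne\Pi_P\bfu\parallel\w$ and ${\bf 0}\ne\Pi_P\bfv\parallel\z$ --- precisely the setting in which Lemma~\ref{lemma:spacechange} is stated. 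Put $\varphi(\bfu,\bfv):=\arccos\langle\bfu,\bfv\rangle$. Since $\x,\y\in\partial K$ and $|\x-\y|\le\tau$, the definition \eqref{modcontuv} of the space modulus yields $\varphi(\bfu,\bfv)\le\omega(\bfn,\tau)$, and monotonicity of $\omega(\bfn,\cdot)$ together with the choice of $\tau_0$ gives the chain $\varphi(\bfu,\bfv)\le\omega(\bfn,\tau)\le\omega(\bfn,\tau_0)\le 2\arcsin\frac{r}{2R}$.

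I would then invoke Lemma~\ref{lemma:spacechange} with this $P$ (which contains $\x,\y,\c$); its estimate \eqref{munueq} reads $|\w-\z|\le\frac{2R}{r}|\bfu-\bfv|=\frac{4R}{r}\sin\frac{\varphi(\bfu,\bfv)}{2}$. As $\w,\z$ are unit vectors, $|\w-\z|=2\sin\bigl(\frac12\arccos\langle\w,\z\rangle\bigr)$ with $\frac12\arccos\langle\w,\z\rangle\in[0,\pi/2]$, so $\arccos\langle\w,\z\rangle=2\arcsin\frac{|\w-\z|}{2}$. Since $\varphi(\bfu,\bfv)\le 2\arcsin\frac{r}{2R}<\pi/3$, the quantity $\frac{2R}{r}\sin\frac{\varphi(\bfu,\bfv)}{2}$ lies in $[0,1]$, where $\arcsin$ is increasing, so the computation would run
$$
\arccos\langle\w,\z\rangle=2\arcsin\frac{|\w-\z|}{2}\ \le\ 2\arcsin\!\Bigl(\frac{2R}{r}\sin\frac{\varphi(\bfu,\bfv)}{2}\Bigr)\ \le\ 2\arcsin\!\Bigl(\frac{2R}{r}\sin\frac{\omega(\bfn,\tau)}{2}\Bigr),
$$
the last step using $\varphi(\bfu,\bfv)\le\omega(\bfn,\tau)$, monotonicity of $\sin$ on $[0,\pi/6]$ and of $\arcsin$ on $[0,1]$, and once more $\frac{2R}{r}\sin\frac{\omega(\bfn,\tau)}{2}\le 1$. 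Taking the supremum over all admissible $(\x,\y,\w,\z)$ then gives the stated inequality.

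The argument is essentially a repackaging of Lemma~\ref{lemma:spacechange}, so I do not expect a serious obstacle; the only points that really need care are the two just isolated --- that boundary points of the section $K_0$ are boundary points of $K$ (so $\omega(\bfn,\cdot)$ genuinely bounds $\varphi(\bfu,\bfv)$), and keeping every argument of $\sin$ and $\arcsin$ within a range where these functions are monotone and mutually inverse, which is exactly what the hypotheses $\tau<\tau_0$ and $\omega(\bfn,\tau_0)\le2\arcsin\frac{r}{2R}$ secure. The restriction $\tau_0\le 2r$ is not used. Finally, the borderline case $\varphi(\bfu,\bfv)=2\arcsin\frac{r}{2R}$, not literally covered by the strict hypothesis in Lemma~\ref{lemma:spacechange}, is harmless: then $\frac{2R}{r}\sin\frac{\varphi(\bfu,\bfv)}{2}=1$ and the bound reads $\arccos\langle\w,\z\rangle\le\pi$, which is trivial --- and in any event the displayed chain uses only \eqref{munueq}, whose proof never invokes strictness.
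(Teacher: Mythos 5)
Your proof is correct and follows essentially the same route as the paper: for each admissible pair $\x,\y\in\partial K_0$ and $\w\in\bnu(\x)$, $\z\in\bnu(\y)$, lift $\w,\z$ to space normals $\bfu\in\nx$, $\bfv\in\ny$ via Lemma~\ref{lemma:projections}, check the angle hypothesis from $\omega(\n,\tau)\le\omega(\n,\tau_0)\le 2\arcsin\frac{r}{2R}$, apply Lemma~\ref{lemma:spacechange}, and take the supremum. Your supplementary observations --- that $\partial K_0\subset\partial K$ so that $\omega(\n,\cdot)$ genuinely bounds $\arccos\langle\bfu,\bfv\rangle$, that $R<\infty$ automatically, that the hypothesis $\tau_0\le 2r$ is never used, and that the borderline case in Lemma~\ref{lemma:spacechange} is harmless because only \eqref{munueq} is needed --- are all correct and tidy up details the paper leaves implicit, but the substance of the argument is the same.
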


\begin{remark} Note that $K\subset B(\c,R)$ holds with $R:={\rm diam}
K-r<{\rm diam} K $, always. On the other hand for a
\emph{symmetric} convex body $K\subset H $ $r=w(K)/2$, where
$w(K)$ is the \emph{minimal width} of $K$, and in general by
Steinhagen's Inequality, $\frac{1}{2\sqrt{d}} w(K) \le r \le
w(K)/2$ (if $d$ is odd) and $\dfrac{\sqrt{d+2}}{2d+2} w(K) \le r
\le w(K)/2$ (if $d$ is even) for $K\subset \RR^d$ any convex body,
see \cite{BF}.
\end{remark}

\begin{proof} Let $P$ be any such plane, and $\x,\y\in\partial
K_0$ satisfying $|\y-\x|\leq \tau <\tau_0$. \comment{Observe that
then $\c$, $\x$ and $\y$ cannot be collinear, unless $\tau=0$ and
$\x=\y\in\partial K$. Indeed, if $\c$ have separated $\x$ and $\y$
on a straight line, then $\tau_0 >\tau= |\x-\y|\geq 2r$ (as
$B(\c,r)\subset K$), contradicting to our assumption $\tau_0\leq
2r$, and if $\c$ have not separated $\x$ and $\y$, but
nevertheless the three points were collinear, then $\x$ and $\y$
would have to sit on the same ray emanating from $\c$, furthermore
we must have $\x=\y$, $\tau=0$, as otherwise one of $\x,\y$ is a
nontrivial convex combination of the other boundary point and
$\c\in \intt K$, hence itself is an interior point, c.f. \cite[I,
p. 413, proof of Theorem 1]{DS}, a contradiction with
$\x,\y\in\partial K$ again.

So consider first the case when $0<\tau<\tau_0$ and $\x\ne\y$.
Then according to the above, $\x,\y,\c$ are not collinear.} By
condition, for any $\bfu\in\nx,~\bfv\in \ny$ we have $\arccos
\langle \bfu,\bfv \rangle \leq \omega(\n,\tau) \leq
\omega(\n,\tau_0) \leq 2\arcsin \frac{r}{2R}$ and Lemma
\ref{lemma:spacechange} applies. Using the vectors $\bfu, \bfv$,
provided by Lemma \ref{lemma:projections} to $\w, \z$, and
applying also $\arccos \langle \bfu,\bfv \rangle \leq
\omega(\n,\tau)$ and Lemma \ref{lemma:spacechange}, we infer for
arbitrary $\w \in \bnu(\x),~\z \in\bnu(\y)$ the estimate
\begin{equation}\label{nuestimate}
\arccos \langle \w,\z \rangle \leq 2 \arcsin
\left(\frac{2R\sin\frac{\arccos \langle
\bfu,\bfv\rangle}{2}}{r}\right) \leq 2 \arcsin
\left(\frac{2R}{2}\sin\frac{\omega(\bfn,\tau)}{r}\right).
\end{equation}
Taking supremum over $\w\in \bnu (\x)$ and $\z\in \bnu(\y)$ we
arrive at \eqref{modcontcompare}, whence the assertion.
\end{proof}

\begin{proof}[Proof of Proposition \ref{prop:modulicompare}] Quite
the same way as above, a direct application of Lemma
\ref{lemma:spacechange}, \eqref{munueq} yields the less restricted
variant formulated above as Proposition \ref{prop:modulicompare}.
\end{proof}

Even if not sharp in the sense that it does not imply the
well-known Blaschke theorem in $\RR^d$ (but only with the loss of
a constant factor, we can still formulate a space version of the
discrete Blaschke type theorem above. Namely, we obtain by an
application of Proposition \ref{prop:anglecompare} and Theorem
\ref{th:inscribed} the following.

\begin{corollary}\label{cor:spacediscrBlaschke} Let $K\subset H$ be
a convex body, where $H$ is a Hilbert space. Assume that $\c\in K$
with $B(\c,r)\subset K \subset B(\c,R)$ with $0<r<R\leq \infty$.
Let us also assume that with certain parameters $0<\tau<2r$ and
$0<\varphi< 2 \arcsin(r/(4\sqrt{2}R))$, the modulus of continuity
\eqref{modcontuv} of the non-empty valued, multivalued outer unit
normal vector function $\nx$ satisfies $\omega (\bfn,\tau)\leq
\varphi$.

Then for any boundary point $\x\in \partial K$, any outer unit
normal vector $\bfu \in \nx$ and any two-dimensional affine plane
$P$ passing through $\x$ and $\c$ the orthogonal projection $\Pi_P
\bfu$ of $\bfu$ to $P$ does not vanish, and
$\w:=\Pi_P\bfu/|\Pi_P\bfu|$ is well-defined. Let us take an
arbitrary coordinate system in $P$ and let us put
$\alpha:=\arg(\w)+\pi/2 \mod 2\pi$ with respect to this coordinate
system. Then with the mangled $n-gon$ $M(\x,\al,\Phi,\tau)$
defined in Definition \ref{def:M}, \eqref{Mdef}, we have
$M(\x,\al,\Phi,\tau) \subset K$, where $\Phi=2 \arcsin \left(
\dfrac{2R}{r} \sin \dfrac{\varphi}{2}\right)$.
\end{corollary}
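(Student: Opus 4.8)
The plan is to reduce the higher-dimensional statement to the planar Theorem~\ref{th:inscribed} via the two-dimensional section $K_0 = K \cap P$, using Proposition~\ref{prop:anglecompare} to control how much the modulus of continuity can grow when we pass from the space normals $\bfn$ to the in-plane normals $\bnu$. First I would fix a boundary point $\x \in \partial K$, an outer unit normal $\bfu \in \nx$, and a two-dimensional affine plane $P$ through $\x$ and $\c$. Since $\c \in \intt K \cap P$ and $\x \in \partial K$, Lemma~\ref{lemma:nonzeroproj} (or directly Lemma~\ref{lemma:projections}) guarantees $\Pi_P \bfu \ne \ob$, so $\w := \Pi_P\bfu / |\Pi_P\bfu|$ is a well-defined unit vector in $P$, and by Lemma~\ref{lemma:projections} it is an outer unit normal to $K_0$ at $\x$ within $P$. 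Fixing a coordinate system in $P$, we set $\alpha := \arg(\w) + \pi/2 \bmod 2\pi$, which is exactly the normalization needed so that $\w = (\sin\alpha, -\cos\alpha)$ in the sense required by Theorem~\ref{th:inscribed}.

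Next I would verify that the hypotheses of Proposition~\ref{prop:anglecompare} are met so that the in-plane modulus of continuity is controlled. We are given $0 < \tau < 2r$ and $0 < \varphi < 2\arcsin(r/(4\sqrt2 R))$ with $\omega(\bfn,\tau) \le \varphi$. To apply Proposition~\ref{prop:anglecompare} with the chosen $\tau$ we need a threshold $\tau_0$ with $\tau < \tau_0 \le 2r$ and $\omega(\bfn,\tau_0) \le 2\arcsin(r/(2R))$; here one takes $\tau_0$ slightly above $\tau$ (still below $2r$) and uses that $\omega(\bfn,\cdot)$ is nondecreasing together with $\varphi < 2\arcsin(r/(4\sqrt2 R)) < 2\arcsin(r/(2R))$, so $\omega(\bfn,\tau_0)$ stays below the required bound for $\tau_0$ close enough to $\tau$. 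Proposition~\ref{prop:anglecompare} then gives
\[
\omega(\bnu,\tau) \le 2\arcsin\!\left(\frac{2R}{r}\sin\frac{\omega(\bfn,\tau)}{2}\right) \le 2\arcsin\!\left(\frac{2R}{r}\sin\frac{\varphi}{2}\right) =: \Phi,
\]
using monotonicity of $x \mapsto 2\arcsin((2R/r)\sin(x/2))$ on the relevant range. The extra factor $\sqrt2$ in the hypothesis on $\varphi$ is there precisely to ensure $\Phi < \pi/4$: indeed $\sin(\varphi/2) < \sin(\arcsin(r/(4\sqrt2 R))) = r/(4\sqrt2 R)$ gives $(2R/r)\sin(\varphi/2) < 1/(2\sqrt2) = \sin(\pi/4)/\,$(something suitably $<1$), so that $\Phi = 2\arcsin(\cdots) < \pi/4$ as needed for the planar theorem. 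I would spell out this elementary trigonometric estimate carefully, since it is the only place the constant $4\sqrt2$ is consumed.

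Finally I would apply Theorem~\ref{th:inscribed} to the planar convex body $K_0 \subset P \simeq \CC$: its boundary curve $\gamma_0 = \partial K_0$ has outer unit normal function $\bnu$ with $\omega(\bnu,\tau) \le \Phi < \pi/4$; the point $\x \in \partial K_0$ has outer unit normal $\w = (\sin\alpha, -\cos\alpha) \in \bnu(\x)$; hence $M(\x,\alpha,\Phi,\tau) \subset K_0 \subset K$, which is the assertion. The main obstacle I anticipate is not conceptual but bookkeeping: making sure the chain of monotone-function and $\arcsin$ inequalities is applied on domains where these functions are genuinely monotone and where the arguments of $\arcsin$ stay in $[-1,1]$ — in particular confirming $(2R/r)\sin(\varphi/2) \le 1$ and $\Phi \le \pi/4$ from the stated bound on $\varphi$ — and checking that one may legitimately pick $\tau_0 > \tau$ with $\omega(\bfn,\tau_0)$ still within the Proposition~\ref{prop:anglecompare} threshold (a right-continuity/monotonicity argument on $\omega(\bfn,\cdot)$, or simply replacing $\tau_0$ by any value in $(\tau, 2r)$ and noting $\omega(\bfn,\tau_0)$ can be taken $\le 2\arcsin(r/(2R))$ by shrinking). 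Everything else is a direct invocation of the lemmas and Theorem~\ref{th:inscribed}.
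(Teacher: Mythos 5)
Your proposal is correct and follows essentially the same route as the paper: invoke Lemma~\ref{lemma:projections} to get $\w\in\bnu(\x)$, feed $\omega(\bfn,\tau)\le\varphi$ into Proposition~\ref{prop:anglecompare} to bound $\omega(\bnu,\tau)\le\Phi$, check $\Phi<\pi/4$ from the $4\sqrt2$ constant (the paper uses $2\sigma\le\arcsin(2\sin\sigma)$ for $0\le\sigma\le\pi/4$ to conclude $2\arcsin(1/(2\sqrt2))\le\pi/4$), and apply Theorem~\ref{th:inscribed} to $K_0=K\cap P$. The one wrinkle is your detour of picking $\tau_0$ ``slightly above $\tau$'' — since $\omega(\bfn,\cdot)$ need not be right-continuous this is awkward to justify; the paper avoids it by simply taking $\tau_0:=\tau$, for which the chain in the proof of Proposition~\ref{prop:anglecompare} still closes because the needed inequality $\omega(\bfn,\tau)\le 2\arcsin(r/(2R))$ holds directly from the hypothesis on $\varphi$.
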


\begin{proof} Let $\x, \c, P, \bfu, \w, \tau, \varphi, r, R$ as
above. Also let $K_0:=K\cap P$ and $\bnu(\x)$ be the planar outer
unit normal vector(s) function of $K_0$ in $P$. By the final
assertion of Lemma \ref{lemma:projections} we indeed have $\Pi_P
\bfu\ne {\bf 0}$, moreover, $\w\in \bnu(\x)$.

Let us estimate the modulus of continuity $\omega(\bnu, \cdot)$ of
$\bnu(\x)$. By Proposition \ref{prop:anglecompare} we
have $ \omega(\bnu,t)\leq 2 \arcsin \left( \frac{2R}{r} \sin
\frac{\omega(\n,t)}{2}\right)$, whenever $t\leq t_0$ where
$0<t_0<2r$ is such that $\omega(\bfn,t_0)\leq 2 \arcsin
\dfrac{r}{2R}$. Let us choose here $t_0:=\tau$: certainly
$\tau<2r$, and by condition $\omega(\bfn,\tau)=\varphi<2
\arcsin(r/(4\sqrt{2}R)) < 2 \arcsin (r/(2R))$, so the above
inequality from Proposition \ref{prop:anglecompare} applies even
to $\tau$ in place of $t$. We thus find $\omega(\bnu,\tau)\leq 2
\arcsin \left( \frac{2R}{r} \sin
\frac{\omega(\n,\tau)}{2}\right)\leq 2 \arcsin \left( \frac{2R}{r}
\sin \frac{\varphi}{2}\right) < 2 \arcsin \left(
\frac{1}{2\sqrt{2}} \right)<\pi/4 $ as $2 \sigma \leq \arcsin (2
\sin \sigma)$ for $0\leq \sigma \leq \pi/4$.

So now we know that the modulus of continuity of the outer unit
normal vector(s) function $\bnu(\x)$ of $K_0$ in $P$ satisfies
$\omega(\bfn,\tau)\leq \Phi$ with $\Phi:=2 \arcsin \left(
\frac{2R}{r} \sin \frac{\varphi}{2}\right)<\pi/4$.

That is, Theorem \ref{th:inscribed} applies with $\tau$ and $\Phi$
and the outer unit normal $\w\in\bnu(\x)$. Thus we are led to
$M(\x,\al,\Phi,\tau) \subset K$, and the assertion follows.
\end{proof}

\begin{remark} Corollary \ref{cor:spacediscrBlaschke} provides
an inscribed body through description of its plane sections.
However, note that in general the inscribed body is \emph{not} a
rotationally symmetric body, the possible axes of symmetry, $\w\in
\bnu (\x)$, varying from plane section to plane section.
Nevertheless, in the limit, when curvature exists and $\tau$,
whence $\varphi$ tends to 0, one can deduce a Blaschke type
theorem with some ball of radius $\rho=r/(2R\kappa_0)$ inscribed
in $K$ and containing $\x\in \partial K$, as above in Corollary
\ref{corollary:Astrong}.
\end{remark}

\section{Acknowledbement}

This paper was thoroughly checked by Endre Makai, who gave
numerous suggestions for improving the presentation. We thank to
him also the sharpening of the inscribed mangled $n$-gon result,
by means of the modulus of continuity with respect to arc length,
as given above in Theorem \ref{th:arclength}.

\end{document}